\documentclass[11pt,reqno]{amsart}
\usepackage{bm,amsmath,amsthm,amssymb,mathtools,verbatim,amsfonts,tikz-cd,mathrsfs,diagbox}
\usepackage{enumitem}
\usepackage{float}

\usepackage[hidelinks,colorlinks=true,linkcolor=blue, citecolor=black,linktocpage=true]{hyperref}
\usepackage{graphicx}
\usepackage[alphabetic]{amsrefs}
\usepackage{caption}
\usepackage{morefloats}
\usepackage[top=1.4in, bottom=1.2in, left=1.4in, right=1.4in,marginpar=1in]{geometry}
\usepackage{subfigure}
\usepackage{adjustbox}

\usetikzlibrary{matrix,arrows,decorations.pathmorphing,calligraphy}

\usepackage{chngcntr}
\counterwithin{figure}{section}

\newtheorem{thm}{Theorem}[section]
\newtheorem{prop}[thm]{Proposition}

\newtheorem{cor}[thm]{Corollary}
\newtheorem{lem}[thm]{Lemma}

\theoremstyle{definition}
\newtheorem{define}[thm]{Definition}

\theoremstyle{remark}
\newtheorem{rem}[thm]{Remark}
\newtheorem{example}[thm]{Example}

\newtheorem{question}[thm]{Question}

\newcommand{\ve}[1]{\boldsymbol{\mathbf{#1}}}

\newcommand{\Z}{\mathbb{Z}}

\renewcommand{\d}{\partial}
\renewcommand{\subset}{\subseteq}

\renewcommand{\tilde}{\widetilde}
\renewcommand{\bar}{\overline}

\newcommand{\iso}{\cong}

\DeclareMathOperator{\Ext}{{Ext}}

\DeclareMathOperator{\gr}{{gr}}

\DeclareMathOperator{\Hom}{{Hom}}

\DeclareMathOperator{\id}{{id}}

\DeclareMathOperator{\Span}{{Span}}

\newcommand{\lk}{\mathrm{lk}}

\newcommand{\bF}{\mathbb{F}}

\newcommand{\bI}{\mathbb{I}}

\newcommand{\bT}{\mathbb{T}}

\newcommand{\cA}{\mathcal{A}}
\newcommand{\cB}{\mathcal{B}}
\newcommand{\cC}{\mathcal{C}}

\newcommand{\cE}{\mathcal{E}}
\newcommand{\cF}{\mathcal{F}}

\newcommand{\cR}{\mathcal{R}}
\newcommand{\cS}{\mathcal{S}}

\newcommand{\cX}{\mathcal{X}}
\newcommand{\cY}{\mathcal{Y}}

\newcommand{\frd}{\mathfrak{d}}

\newcommand{\fro}{\mathfrak{o}}

\newcommand{\cCFL}{\mathcal{C\!F\!L}}
\newcommand{\cCFK}{\mathcal{C\hspace{-.5mm}F\hspace{-.3mm}K}}
\newcommand{\cHFL}{\mathcal{H\!F\! L}}

\newcommand{\CFL}{\mathit{CFL}}
\newcommand{\HFL}{\mathit{HFL}}

\newcommand{\xs}{\ve{x}}
\newcommand{\ys}{\ve{y}}
\newcommand{\zs}{\ve{z}}
\newcommand{\ws}{\ve{w}}

\newcommand{\as}{\ve{\alpha}}
\newcommand{\bs}{\ve{\beta}}

\renewcommand{\a}{\alpha}
\renewcommand{\b}{\beta}
\newcommand{\g}{\gamma}

\usepackage{leftidx}

\DeclareMathOperator{\Cone}{{Cone}}

\numberwithin{equation}{section}

\newcommand{\ar}{\mathrm{a.r.}}

\allowdisplaybreaks

\title{Link Floer homology, nonformality, and the Borromean rings}

\author{Kristen Hendricks}
\address{Department of Mathematics \\ Rutgers University New Brunswick \\ New Brunswick, NJ}
\email{kristen.hendricks@rutgers.edu}
\thanks{KH was partially supported by NSF grant DMS-2505573 and a Simons Travel Fellowship.}

\author{Matthew Stoffregen}
\address{Department of Mathematics \\ Michigan State University \\ East Lansing, MI}
\email{stoffre1@msu.edu}
\thanks {MS was partially supported by NSF grant DMS-2203828}

\author{Ian Zemke}
\address{Department of Mathematics\\University of Oregon\\  Eugene, OR}
\email{izemke@uoregen.edu}
\thanks{IZ was partially supported by a Sloan Fellowship, and NSF grants DMS-2204375 and DMS-2543889.}

\begin{document}
\maketitle

\begin{abstract} Our main result is a computation of  the full link Floer complex of the Borromean rings. The Borromean rings are well known to be an L-space link. We prove that the full link Floer complex is not a free-resolution of its homology, in contrast to the situation for L-space knots, plumbed L-space links, and two-component L-space links. We also describe the link involution of the Borromean rings, up to a minor ambiguity. The proof goes by way of studying some general techniques about $A_\infty$-deformations of L-space link Floer complexes. Our techniques give a simple criterion for when an L-space link must have formal link Floer complex, which reproves the existing formality results for one- and two-component L-space links.
\end{abstract}

\section{Introduction}

In this article, we compute the link Floer complex of the Borromean rings $\cB$. The Borromean rings form a 3-component link, and we will view the link Floer complex, denoted $\cCFL(L)$, as a free, finitely generated chain complex over  the six variable polynomial ring $\cR_3:=\bF[W_1,Z_1,W_2,Z_2,W_3,Z_3]$.

We now present our candidate, which we denote $\cC(\cB)$. We introduce the following notation. 
Write $B$ for the $1\times 1$ box complex over $\bF[W,Z]$, given by the diagram
\[
\begin{tikzcd}[labels=description]& \theta \ar[dl, "Z"] \ar[dr, "W"]\\
w \ar[dr, "W"] &&  z \ar[dl, "Z"]\\
&1
\end{tikzcd}
\]
Write $B_i$ for the above complex over $\bF[W_i,Z_i]$, for $i\in \{1,2,3\}$.

The underlying vector space of $\cC$ is obtained by first taking the subquotient complex
\[
\cC_0\subset B_1\otimes_{\bF} B_2\otimes_{\bF} B_3
\] 
consisting of all generators except $111$ and $\theta\theta\theta$. Here we omit the tensor product symbol $\otimes$ when writing the generators, so $111$ denotes $1\otimes 1\otimes 1$. We can view $\cC_0$ as a free, finitely generated chain complex over $\cR_3$. 

We then adjoin two new generators $\ve{X}$ and $\ve{Y}$ to $\cC_0$ to form $\cC(B)$. If $\d_0$ denotes the differential inherited from $\cC_0$ being a subquotient complex of $B_1\otimes B_2\otimes B_3$, then
the differential on $\cC(B)$ is given by $\d_0+ \a$ where $\a$ is the following map
\begin{enumerate}
\item $\a(\ve{X})= Z_1 (w11)+Z_3 (11w)$.
\item $\a(\ve{Y})=Z_2 (1w1)+ Z_3 (11w)$.
\item $\a(\theta w w)=W_3 (1w1).$
\item $\a(w\theta w)=W_1 (11w)$.
\item $\a (ww\theta)=W_2 (w11)$.
\item $\a (\theta \theta w)=Z_1 (w1w)+W_3(\ve{X}+\ve{Y}).$
\item $\a(\theta w \theta)=Z_3 (1ww)+W_2 \ve{X}$.
\item $\a(w\theta\theta)=Z_2 (ww1)+W_1 \ve{Y}.$
\end{enumerate}

\begin{thm}\label{thm:main-computation} The complex $\cC(\cB)$, described above, is homotopy equivalent to $\cCFL(\cB)$ over $\cR_3$.
\end{thm}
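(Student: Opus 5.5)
The plan is to use that $\cB$ is an L-space link. For an L-space link, $\cCFL(L)$ is determined up to homotopy by a deformation problem. Its homology $\cHFL(L)$ over $\cR_3$ is determined by the $H$-function, which in turn comes from the multivariable Alexander polynomial via the Gorsky--Némethi inclusion--exclusion formula. The first step is therefore to compute $\cHFL(\cB)$ as an $\cR_3$-module from the Alexander polynomial of the Borromean rings, $\Delta=(t_1-1)(t_2-1)(t_3-1)$ up to normalization, together with the $H$-functions of its sublinks, which are unlinks.

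In parallel, I will compute $H_*(\cC(\cB))$ directly. I will first check $\d^2=0$ for $\d_0+\a$; this is a finite check on the eight listed terms, for instance $\d\a(\theta\theta w)$ against $\a\d_0(\theta\theta w)$. I will then verify that $H_*(\cC(\cB))\cong \cHFL(\cB)$ as graded $\cR_3$-modules, with the correct Maslov and Alexander gradings. Both sides are bigraded submodules of $\bF[W_i^{\pm1},Z_i^{\pm 1}]$-type towers, so this reduces to comparing the generating lattice points of the $H$-function.

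Next I will use the general theory of $A_\infty$-deformations of L-space link complexes. A free resolution $P$ of $\cHFL(L)$ carries a homotopy $\cR_3$-module structure, and any complex with the same homology is obtained by a deformation of the differential classified by obstruction classes in $\Ext_{\cR_3}(\cHFL,\cHFL)$ of appropriate degree. For the Borromean rings I expect this $\Ext$ group, restricted to the gradings compatible with Maslov and Alexander degree, to be very small. It should leave exactly two possibilities: the formal complex (the free resolution) and one nontrivial deformation, realized by $\cC(\cB)$ through the $\a$ terms involving $\ve{X},\ve{Y}$. The key step is then to rule out formality, or equivalently to detect the nontrivial class, using genuine Floer-theoretic input.

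For that input I will use the link surgery formula or sublink behavior. Setting $W_3=Z_3=1$, or more precisely using the localization that forgets a component, must recover $\cCFL$ of the two-component unlink, tensored with the appropriate factor. The formal complex and $\cC(\cB)$ may differ in such a reduction, or in the action of basepoint maps $\Phi_i,\Psi_i$. An alternative detection is a direct computation from a nice or grid diagram for $\cB$, counting only a few rectangles in the relevant grading to identify the component hitting the $\ve{X},\ve{Y}$ terms. I expect this detection step to be the main obstacle: showing that the $\Ext$ computation leaves a unique nonformal candidate is algebra, but pinning down which candidate occurs needs a geometric computation. I would try the grid diagram first, restricted to the single Alexander multigrading where the obstruction class lives, so that the count is finite and small.
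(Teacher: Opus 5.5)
Your architecture matches the paper's. You write $\cCFL(\cB)$ as a deformation of a free resolution $F_*$ of $\cHFL(\cB)$ that lowers resolution grading by at least $2$. You then use obstruction theory and the graded $\Ext$ groups to see there is exactly one nontrivial deformation. You still have to compute those groups; the paper does it in Macaulay2, finding $\Ext^i_{(-1,-1)}\cong\bF$ for $i=3$ and $0$ for all other $i>1$. Finally you show the true complex is not formal.

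The gap is in that last step, which you rightly flag as the crux but do not resolve.

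\begin{itemize}
\item Your first idea cannot work. Every two-component sublink of $\cB$ is an unlink with zero linking numbers, and $H_{\cB}$ differs from the unlink's $H$-function only at $\ve{s}=0$. So after inverting (or setting to $1$) any $Z_i$, the true complex is governed by the same sublink data as the three-component unlink. That unlink's complex is a Koszul resolution, hence formal. Since $F_*$ localizes to a free resolution of the same localized module, $F_*$ and $\cCFL(\cB)$ have equivalent localizations, and no sublink test can separate them.
\item The maps $\Phi_i,\Psi_i$ are formal derivatives of the differential. They carry no new information without extra geometric input, which you do not specify.
\item The grid-diagram fallback is legitimate in principle, but ``identifying the component hitting $\ve{X},\ve{Y}$'' is not well posed. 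The grid complex has different generators, so you need a homotopy invariant to compare, and the count is not small.
\end{itemize}

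The missing idea is to compare hat ranks. A formal complex would be homotopy equivalent to the minimal resolution, which is reduced with $66$ generators. Then $\widehat{\HFL}(\cB)\cong \mathrm{Tor}^{\cR_3}_*(\cHFL(\cB),\bF)$ would have rank $66$. But $\cB$ is alternating, so by Ozsv\'ath--Szab\'o, $\widehat{\HFL}(\cB)$ is thin and determined by $\Delta_{\cB}=(1-t_1)(1-t_2)(1-t_3)$. Its total rank is the sum of the absolute values of the coefficients of $\prod_i(1-t_i)^2$, namely $4^3=64$. Hence $\cCFL(\cB)$ is the nontrivial deformation.

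The same count completes your identification. $\cC(\cB)$ is reduced with $64$ generators, so it is not equivalent to $F_*$. Once you know its homology is $\cHFL(\cB)$, it must be that deformation. The paper gets this for free by building it as $(F_*,\d_1+\d_3)$ and cancelling the acyclic pair $X_0,Y_0$.

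If you still wanted a grid computation, this is exactly what it would need to detect. In Alexander grading $(0,0,0)$, $\widehat{\HFL}(\cB)$ has rank $8$, not the $10$ a formal complex would give.
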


We recall that $\cB$ is an L-space link, i.e. $S^3_{\Lambda}(L)$ is an L-space for all framings $\Lambda\gg 0$ \cite[Example 3.1]{LiuLSpaceLinks}.

The following question is asked in \cite{BLZLattice}:

\begin{question} Is the link Floer complex of an L-space link \emph{formal}, i.e. homotopy equivalent to a free-resolution of its homology?
\end{question}
\begin{rem}The above question is known to be true in some cases:
\begin{enumerate}
\item If $K$ is an L-space knot, then $\cCFK(K)$ is formal over $\bF[W,Z]$ by Ozsv\'{a}th and Szab\'{o}'s proof \cite[Proof of Theorem 1.2]{OSLens} that the knot Floer complex of an L-space knot is a staircase complex over $\bF[W,Z]$, which is easily seen to be a free-resolution of its homology.
\item If $L$ is a plumbed L-space link in a rational homology sphere (which includes the case that $L$ is an algebraic link \cite[Theorem 2]{GorskyNemethiAlgebraicLinks}), then $\cCFL(L)$ is formal over $\cR_n$ for $n=|L|$ by \cite[Theorem 1.2]{BLZLattice}. 
\item If $L$ is a two-component L-space link, then $\cCFL(L)$ is formal by \cite[Theorem 1.2]{CZZ}. 
\end{enumerate}
\end{rem}

None of the proofs of the above results naturally extend to non-plumbed links with more than two components. Indeed, we prove:

\begin{thm} The complex $\cCFL(\cB)$ is not formal.
\end{thm}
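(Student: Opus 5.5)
Using Theorem~\ref{thm:main-computation}, I will replace $\cCFL(\cB)$ by the explicit complex $\cC=\cC(\cB)$ and prove that $\cC$ is not homotopy equivalent over $\cR_3$ to any free resolution of $H_*(\cC)$. The natural obstruction is a Massey product, or more invariantly the $A_\infty$-module structure that homological perturbation puts on $H_*(\cC)$. Formality would require this structure to be quasi-isomorphic to the strict module, so all higher operations would vanish up to gauge equivalence. I will therefore compute $H_*(\cC)$ as an $\cR_3$-module and exhibit a nonvanishing higher operation that is intrinsic to the homotopy type.

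\textbf{Step 1: compute the homology.} First compute $H_*(\cC_0)$. Since $B_1\otimes B_2\otimes B_3$ has homology $\bF[W,Z]/(\ldots)$-type generated by $111$, deleting $111$ and $\theta\theta\theta$ produces explicit homology classes. Next, the generators $\ve{X},\ve{Y}$ and the map $\a$ change this in a controlled way. One should find that $H_*(\cC)$ is the expected L-space link module: a torsion-free piece in each Alexander grading determined by the $H$-function of $\cB$. Concretely, I expect $H_*(\cC)$ to be isomorphic to an explicit $\cR_3$-submodule of $\bF[W_1,Z_1,\dots]$, or of its localization, with generators of known gradings.

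\textbf{Step 2: the obstruction.} A free resolution $F$ of $M=H_*(\cC)$ has a canonical minimal model. If $\cC\simeq F$, then $\cC$ reduced to minimal form, with all differential coefficients lying in the maximal ideal, is isomorphic to the minimal resolution. So I will reduce $\cC$ to a minimal complex by cancelling unit arrows; there are probably none, and $\cC$ is likely already minimal. I will then compare ranks. A minimal resolution has Betti numbers $\dim \mathrm{Tor}_i^{\cR_3}(M,\bF)$, and these are determined by $M$ alone, whereas a minimal complex has ranks equal to $\dim H_i(\cC\otimes \bF)$. The proof therefore reduces to the following. Compute $\mathrm{Tor}^{\cR_3}_*(M,\bF)$ from $M$ directly, for instance by building a minimal resolution of the explicit module via Koszul or Gröbner methods. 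Then show that it differs from the ranks of $\cC$ in some homological or Alexander grading. The grading is the cleanest discriminator: $\cC$ has the two extra generators $\ve{X},\ve{Y}$ sitting in a homological degree where the minimal resolution of $M$ has fewer, or differently graded, generators.

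\textbf{Main obstacle.} The hard part is the comparison in Step 2, specifically computing the minimal free resolution of the three-component module $M$ and pinning down gradings. If a direct rank mismatch does not occur, I will instead argue via the $A_\infty$ structure. The perturbation lemma transfers $\a$ to an $m_3$ on $M$ built from the terms $W_3(\ve{X}+\ve{Y})$, $W_2\ve{X}$, $W_1\ve{Y}$. I would then show this $m_3$ is not a Hochschild coboundary by evaluating it on an explicit triple such as $(W_1,W_2,W_3)$ applied to a generator. This gives a nontrivial triple Massey product that a formal module cannot have.
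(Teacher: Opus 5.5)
Your primary route is correct. At its core it is the same numerical comparison the paper makes. On one side is the total Betti number $\sum_i \dim \mathrm{Tor}_i^{\cR_3}(\cHFL(\cB),\bF)=66$, which the paper computes with Macaulay2; the minimal resolution of Section~\ref{sec:free-resolution} has ranks $6,17,21,15,6,1$. On the other side is the rank $64$ of a reduced model of $\cCFL(\cB)$. The link is that a homotopy equivalence with a free resolution $F$ would force $\dim H_*(\cC\otimes_{\cR_3}\bF)=\dim H_*(F\otimes_{\cR_3}\bF)$.

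Where you differ is the source of the $64$. The paper takes it to be $\operatorname{rank}\widehat{\HFL}(\cB)$, obtained from the Alexander polynomial via Ozsv\'ath--Szab\'o's theorem on alternating links, so it never needs Theorem~\ref{thm:main-computation}. You read it off as the rank $(4^3-2)+2$ of the explicit reduced complex $\cC(\cB)$. That is logically sound, since Theorem~\ref{thm:main-computation} is proved without citing the present statement. It is, however, circuitous. The proof of Theorem~\ref{thm:main-computation} rules out the trivial deformation using precisely the inequality $64\neq 66$, with $64$ taken from the alternating-link formula, and it additionally needs the $\Ext$ computation. So your route carries strictly more dependencies while containing the paper's argument inside it.

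Two details should be fixed.

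First, the mismatch is the reverse of what you predict. $\ve{X}$ and $\ve{Y}$ are already present in the resolution: they are $x_{11},x_{19}\in F_1$. It is the resolution that has two additional generators, $X_0\in F_2$ and $Y_0\in F_5$, both in Alexander grading $(0,0,0)$. Moreover, $\cC(\cB)$ has no resolution grading, since the deformation term drops it by $3$. You should therefore compare total ranks, or ranks in each Maslov--Alexander grading, rather than ranks in a ``homological degree''.

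Second, your fallback would not work as stated. Showing that a transferred $m_3$ is nonzero on one input such as $(W_1,W_2,W_3)$ does not show it is not a coboundary, because coboundaries can also be nonzero on particular inputs. You would have to show that the deformation class is nonzero in $\Ext^3$, or control the indeterminacy of the Massey product. Since the rank comparison does succeed, the fallback is not needed.
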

The high level overview of the proof is quite simple, and does not need the full power of our computation in Theorem~\ref{thm:main-computation}.
\begin{proof}[Proof sketch]
The homology group $\cHFL(\cB)$ is determined by the $H$-function of $\cB$, which takes the form of a map $H_\cB\colon \Z^3\to \Z^{\ge 0}$. By work of Gorsky and N\'{e}methi \cite[Theorems 1.5.1 and 1.5.2]{GorskyNemethiLattice},  $H_{\cB}$ is computable from the multivariable Alexander polynomial of the Borromean rings, to wit
\[
\Delta_{\cB}(t_1,t_2,t_3)=1-t_1-t_2-t_3+t_1t_2+t_1t_3+t_2t_3-t_1t_2t_3.
\] The function $H_{\cB}$ is well-known and can be computed from its Alexander polynomial. See \cite{GLLM_Borromean}*{Example~3.14}. From the $H$-function, we give a presentation of $\cHFL(\cB)$ as an $\cR_3$-module. Using Macaulay2 \cite{Mac2}, we computed a free-resolution over $\cR_3$. The result of this computation, as a reduced module, has $66$ generators over $\cR_3$. On the other hand, the Borromean rings form an alternating 3-component link, and therefore $\widehat{\HFL}(\cB)$ is determined by the Alexander polynomial $\Delta_{\cB}$ by Ozsv\'{a}th and Szab\'{o}'s work \cite[Section 4]{OSAlternating}. Using their result, the rank of $\widehat{\HFL}(\cB)$ is the sum of the absolute values of the coefficients of the Alexander polynomial, normalized as $(1-t_1)(1-t_2)(1-t_3) \Delta_{\cB}(t_1,t_2,t_3)$. This implies that $\widehat{\HFL}(\cB)$ has rank 64, which does not agree with the result from the free-resolution (i.e. 66), and therefore $\cCFL(\cB)$ cannot be homotopy equivalent to a free-resolution of its homology.
\end{proof}

The relationship between L-space links and their free-resolutions turns out to still be useful. We will prove below that the link Floer complex of every L-space link $L$ can be described as a certain kind of deformation of a free-resolution of its homology. Our strategy to compute $\cCFL(\cB)$ and prove Theorem~\ref{thm:main-computation} is to show that the free-resolution complex admits a unique such non-trivial deformation, thereby giving the answer.

We will also describe the link involution $\iota_\cB$ for the Borromean rings. The knot involution $\iota_K\colon \cCFK(K)\to \cCFK(K)$, for a knot $K$, was studied extensively in \cite{HMInvolutive}. We recall that the map $\iota_K$ is a homotopy equivalence which interchanges the actions of $W$ and $Z$. By convention, the map $\iota_K$ is referred to as the ``knot Floer involution'', though it does not in general square to the identity, even up to homotopy. The link involution is a small elaboration of this construction. See \cite[Section 15]{HHSZNaturality}. We note that the link involution involves performing a half twist on each link component, and therefore requires a choice of which direction to perform the half twist for each component. Therefore for the Borromean rings, there are \emph{a-priori} 8 link involutions, depending on these choices. In Section~\ref{sec:iota}, we show that there are  algebraically 8 distinct models for the link involution on $\cCFL(\cB)$, which yield mutually non-equivalent $\iota_L$-complexes. These 8 models are related by composing with the diffeomorphism for a Dehn twist around components of $\cB$, and therefore each of these 8 algebraic involutions is the correct involution for some choice of twisting directions. This is in contrast to the case of knots, where there is no known knot $K$ such that reversing the twisting direction produces a non-equivalent $\iota_K$-complex. 

We do not sort out which direction of twist produces which model of link involution. We conjecture that one could determine which twisting directions give which model of the link involution by working over $\Z$ using the ideas of \cite{AM:HFZ}. For the sake of brevity, we do not investigate this question further.

\subsection{Generalities about L-space links}

Although we show that the complex of the Borromean link is not a free-resolution of its homology, our work does give a simple condition which guarantees that the complex of an L-space link is a free-resolution of its homology:

\begin{thm}\label{thm:L-space-ext-groups-trivial} Suppose that $L$ is an L-space link in $S^3$ and that
\[
\Ext_{(-1,-1),\cR_n}^i(\cHFL(L),\cHFL(L))=0
\]
  for all $i>1$. In the above, $\Ext_{(-1,-1),\cR_n}^i$ denotes the subspace of the standard $\Ext$ group of $\cR_n$-modules which lies in $(\gr_{\ws},\gr_{\zs})$-grading $(-1,-1)$. Then $\cCFL(L)$ is homotopy equivalent over $\cR_n$ to a free-resolution if its homology.
\end{thm}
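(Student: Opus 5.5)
The plan is to realize $\cCFL(L)$ as a deformation of a free-resolution of its homology, and to show that every such deformation can be removed when the stated $\Ext$ groups vanish. Because $L$ is an L-space link, $\cHFL(L)$ is supported in a single relative $\gr_{\ws}$-grading (and likewise for $\gr_{\zs}$) on each Alexander multigrading. This lets us carry out homological perturbation over $\cR_n$.

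\emph{Step 1.} Take a minimal free-resolution $(F,\d_F)$ of $M=\cHFL(L)$ over $\cR_n$. Since $\cCFL(L)$ is a free, finitely generated complex over $\cR_n$ with homology $M$, we can compare the two using the standard lifting argument. Pick a map $F_0\to \cCFL(L)$ lifting the identity on $M$, and extend it stage by stage. We then build, by induction on homological degree, a differential $\d=\d_F+\a$ on $F$ that makes $F$ homotopy equivalent to $\cCFL(L)$. Here $\a=\sum_{k\ge 2}\a_k$, and $\a_k$ lowers the resolution degree by $k$. This is the ``deformation of a free-resolution'' promised in the introduction.

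The grading constraint enters here. Since $\d$ has $(\gr_{\ws},\gr_{\zs})$-bidegree $(-1,-1)$, each $\a_k$ also has bidegree $(-1,-1)$. Each $\a_k$ shifts resolution degree by $-k$, whereas $\d_F$ shifts it by $-1$. So in the Hom-complex $\Hom_{\cR_n}(F,F)$, the components $\a_k$ are elements of total homological degree $k-1\ge 1$ (with the sign convention that the resolution degree drop counts positively).

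\emph{Step 2.} Kill the $\a_k$ inductively by changes of basis of the form $\exp$-type automorphisms $\id+h$. Suppose $\a_2=\dots=\a_{k-1}=0$. Then $\d^2=0$ in the lowest surviving resolution-degree component gives $[\d_F,\a_k]=0$, so $\a_k$ is a cycle in $\Hom_{\cR_n}(F,F)$. The homology of this Hom-complex in that degree computes $\Ext^{k-1}_{\cR_n}(M,M)$, and $\a_k$ lies in the bidegree $(-1,-1)$ part. The relevant degree is $k-1\geq 1$. The hypothesis kills $i>1$, so I must check the indexing carefully: the lowest perturbation $\a_2$ should correspond to $\Ext^2$. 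Under the convention where a map dropping resolution degree by $k$ defines a class in $\Ext^k$ via the Yoneda/degree-shift identification, this is exactly the statement's $i>1$. I will fix conventions so that this matches.

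Given the vanishing, $\a_k=[\d_F,h_k]$ for some $h_k$ of bidegree $(0,0)$ that lowers resolution degree by $k-1\ge 1$. Conjugating by $\id+h_k$ produces an isomorphic complex whose perturbation starts in degree $k+1$. Since $F$ is finite length (Hilbert syzygy theorem, length $\le 2n$), this process terminates. The result is $\d=\d_F$, so $\cCFL(L)\simeq F$.

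\emph{Main obstacle.} The hard part is Step 1 together with the degree bookkeeping. We must set up the comparison so that the deformation really is filtered by resolution degree, with nothing in degree $0$ or $1$ beyond $\d_F$. The reason this holds is that the L-space property makes the homology concentrated, so the induced degree-$1$ part can be taken equal to $\d_F$. This is exactly where the L-space hypothesis is used. I expect to handle it via the homological perturbation lemma applied to a filtration of $\cCFL(L)$ whose associated graded is formal, rather than by lifting by hand.
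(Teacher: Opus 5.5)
Your argument is correct in outline. It has the same two-stage shape as the paper's proof: first realize $\cCFL(L)$ as a deformation of a free resolution whose perturbation drops resolution degree by at least $2$, then kill that perturbation using the vanishing $\Ext$ groups. Both stages, however, are carried out differently.

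\textbf{First stage.} The paper does not lift by hand. Lemma~\ref{lem:deformation} puts an $A_\infty$-structure on $\cHFL(L)$ and tensors with the Koszul $DD$-bimodule $\Lambda$, so the deformation of the Koszul resolution is literally the terms coming from the higher actions $m_j$, $j>2$. It then moves this deformation to an arbitrary resolution by splitting off an acyclic summand and applying the perturbation lemma. Your lifting construction is more elementary and does work:
\begin{enumerate}
\item At stage $k$, each new component $F_k\to F_m$ with $m\ge 1$ is solved using exactness of $F$. The relevant obstruction is a $\d_F$-cycle by the Maurer--Cartan equation on $F_{<k}$.
\item The $F_0$-valued obstruction lies in $\ker\epsilon$, because $f$ is already a chain map on $F_{<k}$.
\item The leftover homology class in $M$ is absorbed by a map $F_k\to F_0$, since $F_k$ is free.
\item The resolution-degree filtration then shows the resulting map is a quasi-isomorphism of free, finitely generated graded complexes, hence a homotopy equivalence.
\end{enumerate}

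\textbf{Second stage.} The paper recasts $(F,\d_F+\a)$ as a $\cC_\infty$-complex and runs the obstruction theory of Lemma~\ref{lem:basic_deformation_lemma}, Lemma~\ref{lem:push-deformation} and Proposition~\ref{prop:unique-deformation}. Your gauge transformation by $\id+h_k$ is the same mechanism written directly, and it is shorter. What the paper's heavier framework buys is the uniqueness statement of Proposition~\ref{prop:unique-deformation}(2), which it needs for the Borromean rings, and an identification of the deformation with the $A_\infty$-actions on homology.

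Two corrections.

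First, ``$\Ext^{k-1}$'' is a mistake, not a matter of convention. Cycles in $\Hom_{\cR_n}(F,F)$ that lower resolution degree by $k$, taken modulo homotopy, are exactly $\Ext^k_{\cR_n}(M,M)$. So $\a_2$ defines a class in $\Ext^2$, and the hypothesis $i>1$ is precisely what you need. With $\Ext^{k-1}$ the argument would genuinely fail at $\a_2$, so state the identification correctly. Also choose $h_k$ in Alexander multidegree $0$; this is possible because the Hom complex splits by Alexander multidegree.

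Second, the L-space hypothesis is not where the difficulty lies. For any link, the lifting construction only adds components $F_k\to F_m$ with $m\le k-2$, so $\d_F$ is automatically the leading term. The paper's Corollary~\ref{cor:deformation-link-Floer} likewise holds for arbitrary links, and its proof of this theorem never invokes L-space-ness. The detour through a filtration of $\cCFL(L)$ with formal associated graded is unnecessary, and it is unclear such a filtration exists; just carry out the lifting argument.
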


In Proposition~\ref{sec:L-space-one-two-components}, we show that Theorem~\ref{thm:L-space-ext-groups-trivial} can be used to prove the formality of L-space link complexes with one or two components, recovering the results of \cite{OSLens} and \cite{CZZ} respectively. 

In Section~\ref{sec:Ext-groups}, we show that the hypotheses of Theorem~\ref{thm:L-space-ext-groups-trivial} are not satisfied for the Borromean rings. We note that for a general L-space link $L$, the $\cR_n$-module structure on $\cHFL(L)$ is computable from the Alexander polynomial of $L$ and its sublinks by work of Gorsky and N\'{e}methi \cite{GorskyNemethiLattice}, and therefore the condition in the above theorem can be verified using a computer.

\subsection{Organization} This paper is organized as follows. In Section~\ref{sec:background} we review some structural facts about knot and link Floer homology. In Section~\ref{sec:algebra} we discuss some prerequisite homological algebra and prove several technical lemmas; we conclude by proving Theorem~\ref{thm:L-space-ext-groups-trivial} and showing that this theorem can be used to recover the formality of L-space links of one and two components. In Section~\ref{sec:borromean} we compute a free-resolution of the Borromean rings complex, and in Section~\ref{sec:deform} we find a deformation of this complex suitable to constructing the free link Floer complex, proving Theorem \ref{thm:main-computation}. Finally, in Section~\ref{sec:iota} we review the construction of the involutive knot and link involution and compute the set of link involutions for the Borromean rings up to an ambiguity of orientations. 

\subsection{Acknowledgments} We are grateful to I. Dai, J. Hom, C. Manolescu, and L. Truong for helpful discussions about the knot and link involution in Heegaard Floer homology (over many years). The last author also thanks M. Borodzik, B. Liu, D. Chen and H. Zhou for helpful discussions and their ideas about the L-space formality question. Portions of this work were carried out while the first and third authors were in residence at the 2026 Park City Mathematics Institute Summer Session; we are grateful for the conference's excellent hospitality.

\section{Background on knot and link Floer homology} \label{sec:background}

We now recall some basic material about knot and link Floer homology and establish our notation. We recall that knot Floer homology is an invariant of knots in 3-manifolds, introduced independently by Ozsv\'{a}th and Szab\'{o} \cite{OSKnots} and Rasmussen \cite{RasmussenKnots}. Link Floer homology is an extension of this theory to links due to Ozsv\'{a}th and Szab\'{o} \cite{OSLinks}.

In this article, we write $\cCFK(K)$ for the knot Floer invariant, which takes the form of a finitely generated free chain complex over the two variable polynomial ring $\cR:=\bF[W,Z]$. If $L\subset S^3$ is a link,  we write $\cCFL(L)$ for the link Floer complex, which is a free finitely generated chain complex over the ring $\cR_3:=\bF[W_1,Z_1,W_2,Z_2,W_3,Z_3]$.

For knots and links in $S^3$, the knot Floer complex $\cCFK(K)$ has two Maslov gradings, denoted $\gr_{\ws}$ and $\gr_{\zs}$, as well as an Alexander grading given by the formula
\[
A=\frac{1}{2}(\gr_{\ws}-\gr_{\zs}).
\]
We have the following grading shifts:
\[
(\gr_{\ws},\gr_{\zs})(\d)=(-1,-1), \quad (\gr_{\ws},\gr_{\zs})(W)=(-2,0), \quad (\gr_{\ws},\gr_{\zs})(Z)=(0,-2).
\]

If $L\subset S^3$ is a link, the invariant $\cCFL(L)$ also has a pair of Maslov gradings $\gr_{\ws}$ and $\gr_{\zs}$. The endomorphisms $\d$, $W_i$ and $Z_i$ have the same shifts as in the case of knots.  Additionally, $\cCFL(L)$ has an $n=|L|$ component Alexander grading, which takes values in the lattice
\[
\prod_{i=1}^n \left(\Z+\frac{\lk(K_i, L\setminus K_i)}{2}\right),
\]
where $K_1,\dots, K_n$ are the components of $K_i$. The variables $W_i$ and $Z_i$ have the following Alexander gradings:
\[
A_i(W_j)=-\delta_{ij}\quad \text{and} \quad A_i(Z_j)=\delta_{ij},
\]
where $\delta_{ij}$ denotes the Kronecker delta function. 

We have
\[
A_1+\cdots+A_n=\frac{1}{2}(\gr_{\ws}-\gr_{\zs}). 
\]

\section{Tools from homological algebra} \label{sec:algebra}

In this section, we describe some prerequisite results from homological algebra. 

\subsection{Type-$D$ and $A$ modules}

We will frequently use Lipshitz, Ozsv\'{a}th and Thurston's framework   of  type-$D$, $A$ and $DA$ bimodules \cite{LOTBordered}*{Chapter 2}. We refer the reader there for more details, though we present the definitions here. We focus on the case that we are working over an associative algebra $\cA$ over a ring $\ve{i}$ which is of characteristic 2.

A \emph{type-$D$ module} ${}^{\cA} X$ consists of a left $\ve{i}$ module $X$ equipped with an $\ve{i}$-linear map $\delta^1\colon X\to \cA\otimes_{\ve{i}} X$ which satisfies
\[
(\mu_2\otimes \bI_{X})\circ (\bI_{\cA}\otimes \delta^1)\circ \delta^1=0.
\]

A \emph{type-$D$ morphism} from ${}^{\cA} X$ to ${}^{\cA} Y$ consists of an $\ve{i}$-linear map
\[
f^1\colon X\to \cA\otimes Y.
\]
The hom space $\Hom_{\cA}(X,Y)$ admits a morphism differential
\[
d(f^1):=(\mu_2\otimes \bI_{X})\circ \left((\bI_{\cA}\otimes f^1)\circ \delta^1+(\bI_{\cA}\otimes \delta^1)\circ f^1 \right)
\]
which squares to 0. In particular, the category of type-$D$ modules over an associative algebra forms a $dg$-category.

A \emph{type-$A$ module} $X_{\cA}$ is the same thing as an $A_\infty$-module. We recall that this is the same as a right $\ve{i}$-module equipped with $\ve{i}$-linear maps
\[
m_{j+1}\colon X\otimes_{\ve{i}} \underbrace{\cA\otimes_{\ve{i}}\cdots \otimes_{\ve{i}} \cA}_{j}\to X
\]
for $j\ge 0$ which satisfy the following associativity relations for all $n$:
\[
\begin{split}
0=&\sum_{j=0}^n m_{n-j+1}(m_{j+1}(\ve{x},a_1,\dots, a_i), a_{i+1},\dots, a_n)
\\+
&\sum_{j=1}^{n-1} m_n(\xs,a_1,\dots, a_ja_{j+1},\dots, a_n).
\end{split}
\]
A morphism $f_*\colon M_{\cA}\to  N_{\cA}$ consists of a family of $\ve{i}$-linear maps
\[
f_{i+1}\colon M\otimes_{\ve{i}}\underbrace{\cA\otimes_{\ve{i}}\cdots \otimes_{\ve{i}} \cA}_i\to N
\]
ranging over $i\ge 1$.
The set of morphisms from $M_{\cA}$ to $N_{\cA}$ has a differential, given by
\[
\begin{split}
d(f_*)_{n+1}(\xs,a_1,\dots, a_n)=&\sum_{j=0}^n f_{n-j+1}(m_{j+1}(\xs,a_1,\dots, a_j), a_{j+1},\dots, a_n)\\
+&\sum_{j=0}^n m_{n-j+1}(f_{j+1}(\xs,a_1,\dots, a_j), a_{j+1},\dots, a_n)\\
+&\sum_{j=1}^{n-1} f_n(\xs,a_1,\dots, a_{j}a_{j+1},\dots, a_n).
\end{split}
\]
The category of type-$A$ modules forms a $dg$-category. 

We also recall that Lipshitz, Ozsv\'{a}th and Thurston describe a convenient model for the derived tensor product of a type-$A$ and a type-$D$ module, called the \emph{box tensor product} \cite{LOTBordered}*{Section~2.4}. If ${}^{\cA} Y$ and $X_{\cA}$ are type-$D$ and type-$A$ modules, then the underlying vector space of $X_{\cA}\boxtimes {}^{\cA}Y$  is given by $X\otimes_{\ve{i}} Y$. The differential on the box tensor product is given by the sum
\[
\sum_{i=0}^\infty (m_{i+1} \otimes \bI_{Y})\circ (\bI_X\otimes \delta^i).
\]
where $\delta^0=\bI_{Y}$ and $\delta^i\colon X\to \underbrace{\cA\otimes \cdots \otimes \cA}_i \otimes X$ is obtained by iterating $\delta^1$ $i$-times. Assuming suitable boundedness properties  on $X$ or $Y$, the infinite sum above involves only finitely many terms and determines a differential on $Y_{\cA}\boxtimes {}^{\cA}X$.

\subsection{Deformations of free-resolutions}

If $L$ is a link, write ${}_{\cR_n}\cHFL(L)$ for its link Floer homology, equipped with $A_\infty$-module actions so it is homotopy equivalent to ${}_{\cR_n}\cCFL(L)$; such object exists and is well-defined up to $A_\infty$-isomorphism using the homological perturbation lemma. Write ${}_{\cR_n}\tilde{\cHFL}(L)$ for the underlying $\cR_n$-module, with no higher actions.

\ If ${}_{\cR_n} M$ is an $\cR_n$-module, a \emph{free-resolution} of ${}_{\cR_n} M$ is a pair $({}^{\cR_n} F_*, \epsilon)$ where ${}^{\cR_n} F_*$ is a type-$D$ structure which admits a decomposition $F_*=\bigoplus_{i\ge 0} F_i$, whose structure map $\delta^1$ takes the form
\[
\begin{tikzcd}
\cdots \ar[r, "\delta^1"]& F_2  \ar[r, "\delta^1"] & F_1  \ar[r, "\delta^1"]& F_0.
\end{tikzcd}
\]
Furthermore, we assume that $\epsilon \colon {}_{\cR_n} [\cR_n]_{\cR_n}\boxtimes {}^{\cR_n}F_*\to {}_{\cR_n} M$ is a chain map of $\cR_n$ modules (i.e. a morphism of $A_\infty$-modules with only $\epsilon_1$ non-trivial) which is a quasi-isomorphism and is supported on $\cR_n\otimes F_0$. In the above definition, each $F_i$ is a vector space over $\bF$. Of course, each $F_i$ induces the free $\cR_n$ module $\cR_n\otimes_{\bF} F_i$ in the tensor product ${}_{\cR_n} [\cR_n]_{\cR_n} \boxtimes {}^{\cR_n} F_*$.  Sometimes it is helpful to conflate $\cR_n\boxtimes F_*$ and $F_*$.

We refer to the subscript $i$ in $F_i$ as the \emph{resolution grading}. Note that this will typically not be the same as the Maslov or homological grading because the ring $\cR_n$ is not concentrated in a single Maslov grading.

\begin{lem}
\label{lem:deformation} Let ${}_{\cR_n} M$ be an $A_\infty$-module with $m_1=0$. Let ${}_{\cR_n}\tilde{M}$ be the $\cR_n$-module obtained by forgetting the actions $m_j$ for $j>2$. Let ${}^{\cR_n} \cC$ be a type-$D$ structure so that ${}_{\cR_n}[\cR_n]_{\cR_n}\boxtimes {}^{\cR_n} \cC\simeq {}_{\cR_n} M$ and let ${}^{\cR_n} F_*$ be a free resolution of ${}_{\cR_n} \tilde{M}$. Then there is a type-$D$ endomorphism $\a^1$ of ${}^{\cR_n} F_*$ so that ${}^{\cR_n} \cC\simeq (F_*, \delta^1+\a^1)$. Furthermore, $\a^1$ may be taken to decrease the resolution grading by at least 2.
\end{lem}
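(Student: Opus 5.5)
Throughout, $\cR_n\boxtimes F_*$ abbreviates ${}_{\cR_n}[\cR_n]_{\cR_n}\boxtimes {}^{\cR_n}F_*$.

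The plan is to build an $A_\infty$ quasi-isomorphism from $\cR_n\boxtimes F_*$ to $M$, with the higher actions of $M$ fed in order by order along the resolution filtration, and then to transfer the resulting structure back to type-$D$ language. Concretely, I will look for a type-$D$ endomorphism $\a^1$ of $F_*$, written $\a^1=\sum_{k\ge 2}\a^1_k$ with $\a^1_k$ lowering resolution grading by exactly $k$. I will also look for an $A_\infty$-module morphism
\[
\phi_*\colon \cR_n\boxtimes (F_*,\delta^1+\a^1)\to M
\]
whose component $\phi_1$ restricted to $F_0$ is $\epsilon$. Its higher pieces $\phi_{j+1}$ (with $j$ algebra inputs) will be supported on $F_j$ and its lower-grading corrections.

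A cleaner equivalent formulation is to work with the type-$D$ structure ${}^{\cR_n}\cC$ directly. Since $\cR_n\boxtimes\cC\simeq M$ and $\cR_n$ is Koszul-dual to an exterior algebra, any free type-$D$ structure is determined up to homotopy by the $A_\infty$-module it boxes to. I will therefore construct an $A_\infty$ morphism from $\cR_n\boxtimes(F_*,\delta^1+\a^1)$ to $M$ and check that it is a quasi-isomorphism. After that, the standard fact that quasi-isomorphic free (bounded-below, finitely generated) type-$D$ structures over $\cR_n$ are homotopy equivalent gives $\cC\simeq(F_*,\delta^1+\a^1)$.

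The construction is inductive on resolution grading $k$. At stage $k$, I assume $\a^1$ and $\phi$ are defined on $F_{<k}$. The $A_\infty$ relation for $\phi$, evaluated on $F_k$, has an obstruction term. This term is built from $m_{j}$ for $j\ge 3$ applied to earlier $\phi$'s and from previously chosen $\a$'s, and it is a cycle with respect to $\delta^1$ mapping into the lower part.

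Because $F_*$ is a resolution of $\tilde M$, the complex $\Hom(F_*,\tilde M)$ and the relevant pieces of $(F_*,\delta^1)$ are acyclic in positive resolution degree. So the obstruction can be killed in one of two ways: by choosing $\phi_{\bullet}$ on $F_k$, or, where a cycle in $F_{k-2}$ or lower is not a boundary for $\phi$ but is a boundary in $F_*$ (exactness of the resolution), by adding a term $\a^1_{\ge 2}$. The reason $\a^1$ lowers resolution grading by at least $2$ is that $m_2$ is already matched by $\delta^1$ and $\epsilon$. The first correction comes from $m_3$, which takes two algebra inputs, and it therefore pairs $F_k$ with $F_{k-2}$.

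Grading considerations ensure finiteness in each bigrading, so the induction converges. The resulting $\phi_1$ agrees with $\epsilon$ modulo the filtration. A filtration spectral sequence argument on resolution grading then shows that $\phi$ is a quasi-isomorphism: the associated graded is $\epsilon$, which is a quasi-isomorphism.

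The main obstacle is the inductive step. There I must show the obstruction can always be absorbed by a choice of $\phi$ together with a correction $\a^1$ of resolution drop at least $2$, and that $\delta^1+\a^1$ still squares to zero. For the second point, I will first try to derive $(\delta^1+\a^1)^2=0$ from the $A_\infty$ relations combined with injectivity of $\phi$ on the level of $\Hom$ into an acyclic complex. Alternatively, I would apply the homological perturbation lemma to the filtered complex $\cR_n\boxtimes\cC$ compared against the resolution. That approach produces $\a^1$ together with the square-zero condition automatically, at the cost of checking that the perturbation only lowers resolution grading by at least $2$.
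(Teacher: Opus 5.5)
Your strategy (build $\alpha^1$ and a quasi-isomorphism out of $(F_*,\delta^1+\alpha^1)$ by induction on resolution degree) can be made to work. However, the proposal does not carry out the inductive step, which you yourself flag as the main obstacle, and both justifications you offer for it are incorrect.

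First, $\Hom_{\cR_n}(F_*,\tilde M)$ is \emph{not} acyclic in positive resolution degree. Its homology is $\Ext^*_{\cR_n}(\tilde M,\tilde M)$, which is generally nonzero; for the Borromean rings the paper finds $\Ext^3_{(-1,-1)}\cong\bF$. So no vanishing of obstructions can come from there.

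Second, $(\delta^1+\alpha^1)^2=0$ cannot be recovered afterwards from the $A_\infty$ relations of $\phi$ via some injectivity. $\phi_1$ has a large kernel, and its relations say nothing about $D^2$, where $D=\delta^1+\alpha^1$. The square-zero condition has to be imposed when $\alpha^1$ is chosen, and the real content is that the obstruction to doing so vanishes. Here is how that goes.
\begin{itemize}
\item For a generator $x\in F_k$ you need $y=\alpha^1(x)\in\cR_n\otimes F_{\le k-2}$ with $Dy=\alpha^1\delta^1 x$.
\item The right-hand side equals $D(\delta^1x)$, which is a $D$-cycle in $F_{\le k-3}$.
\item Exactness of $(\cR_n\otimes F_*,\delta^1)$ in positive degrees lets you strip off its top components by $D$-boundaries of elements of $F_{\le k-2}$, until only some $c_0\in\cR_n\otimes F_0$ is left.
\item Because the comparison map is already a chain map on $F_{<k}$ and restricts to $\epsilon$ on $F_0$, you get $\epsilon(c_0)=0$. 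Hence $c_0\in\delta^1(\cR_n\otimes F_1)$, and $D=\delta^1$ on $F_1$.
\item Surjectivity of $\epsilon$ then lets you change the $F_0$-component of $y$, which does not affect $Dy$, to kill the remaining obstruction to extending the comparison map over $x$.
\end{itemize}
This works equally well with an $\cR_n$-linear chain map directly into $\cR_n\boxtimes\cC$, which avoids the higher components $\phi_{j+1}$ entirely. None of this appears in your proposal, and without it the induction is only asserted.

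Your fallback, applying the homological perturbation lemma to ``$\cR_n\boxtimes\cC$ compared against the resolution,'' lacks its essential input. The perturbation lemma needs an unperturbed free resolution of $\tilde M$ together with a perturbation of it that is equivalent to $\cC$. But $\cR_n\boxtimes\cC$ carries no resolution filtration and no retraction onto $F_*$.

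The paper supplies exactly this via the Koszul $DD$-bimodule $\Lambda$, using ${}^{\cR_n}\Lambda^{\cR_n}\boxtimes{}_{\cR_n}[\cR_n]_{\cR_n}\simeq{}^{\cR_n}\bI_{\cR_n}$. This gives $\cC\simeq\Lambda\boxtimes M$, which is literally the Koszul resolution $\Lambda\boxtimes\tilde M$ perturbed by the terms involving $m_{i+1}$ for $i\ge 2$. Those terms drop exterior degree by $i\ge2$, and the Maurer--Cartan equation is automatic because the result is a box tensor product. The deformation is then moved to an arbitrary resolution $F_*$ by the perturbation lemma, using a homotopy of resolution degree $+1$, which preserves the drop of at least $2$. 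Your heuristic that ``the first correction comes from $m_3$'' is exactly what this construction makes rigorous.
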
 
\begin{proof}
We consider the Koszul $DD$ bimodule ${}^{\cR_n}\Lambda^{\cR_n}$ whose underlying vector space is the exterior algebra on $2n$ generators, $\Lambda^*(\phi_1,\dots, \phi_n, \psi_1,\dots, \psi_n)$. We set $\delta^{1,1}(\phi_i)=W_i\otimes 1\otimes 1+1\otimes 1\otimes W_i$ and $\delta^{1,1}(\psi_i)=Z_i\otimes 1\otimes 1+1\otimes 1\otimes Z_i$. We extend $\delta^{1,1}$ to other generators of the exterior algebra via the Leibniz rule. We recall the well-known fact that there is a bounded homotopy equivalence
\[
{}^{\cR_n} \bI_{\cR_n} \simeq {}^{\cR_n} \Lambda^{\cR_n}\boxtimes {}_{\cR_n} [\cR_n]_{\cR_n}. 
\]
The above fact is a repackaging of the fact that
\[
{}^{\cR_n} \Lambda^{\cR_n}\boxtimes {}_{\cR_n}\tilde{M}
\]
is the Koszul resolution of ${}_{\cR_n} \tilde{M}$, and that the Koszul resolution is a free resolution. (See \cite{Weibel}*{Section~4.5 and Application 4.5.6} for background on the Koszul resolution). In the above, a \emph{bounded homotopy equivalence} is one where the maps $f_j$ appearing therein are non-zero for only finitely many $j$.

We observe that ${}^{\cR_n} \Lambda^{\cR_n}\boxtimes {}_{\cR_n}M$ is a deformation of ${}^{\cR_n} \Lambda^{\cR_n}\boxtimes {}_{\cR_n}\tilde{M}$ by some Maurer-Cartan element $\a^1$. The endomorphism $\a^1$ consists of all of the summands of the differential of the box tensor product which involve the action $m_{i+1}$ of ${}_{\cR_n} M$ for $i>1$.

 Therefore
\[
{}^{\cR_n} \cC\simeq {}^{\cR_n} \Lambda^{\cR_n} \boxtimes {}_{\cR_n} M \simeq ({}^{\cR_n} \Lambda^{\cR_n} \boxtimes {}_{\cR_n} \tilde{M},\delta^1+\a^1).
\]

The above argument shows that the free-resolution ${}^{\cR_n} \Lambda^{\cR_n}\boxtimes {}_{\cR_n}\tilde{M}$ has a deformation $\alpha^1$ as in the lemma statement.  We may translate $\alpha^1$ to an arbitrary free resolution ${}^{\cR_n} F_*$ as follows.  First, ${}_{\cR_n} \tilde{M}$ has a minimal free resolution ${}^{\cR_n} G_*$ so that any other free resolution decomposes as a direct sum ${}^{\cR_n}G_*\oplus {}^{\cR_n}T_*$, where ${}^{\cR_n}T_*$ is acyclic.  Moreover, ${}^{\cR_n}T_*$ will inherit its own resolution grading, together with $(\gr_{\ws},\gr_{\zs})$ gradings, and the differential on ${}^{\cR_n}T$ is degree $-1$ with respect to resolution grading, and $(-1,-1)$ in $(\gr_{\ws},\gr_{\zs})$.  Assume ${}^{\cR_n}G_*\oplus {}^{\cR_n}T_*$ is equipped with a perturbation $\alpha^1$.  The homological perturbation lemma, in the form of \cite[2.4]{Crainic} then applies.  To see this we set up some notation.  

Let $h^1$ be a nullhomotopy of the identity on ${}^{\cR_n} T_*$, which exists because ${}^{\cR_n}T_*$ is acyclic.  Let
\[A=(1-\alpha^1\circ h^1)^{-1}\circ\alpha^1.\]
The term $(1-\alpha^1 h^1)$ is invertible if we assume that $\alpha^1$ decreases resolution grading by at least $2$, since the homotopy $h^1$ can be chosen of degree $1$ in resolution grading, and $(1,1)$ in $(\gr_{\ws},\gr_{\zs})$ grading.  The existence of such homotopy follows the same lines as the proof that ${}_{\cR_n} \tilde{M}$ admits a minimal free resolution.  With those degrees on $\alpha^1,h^1$, it follows that applied to any given element of $F_*$, only finitely many terms in the formal series $\sum_{i=0} (\alpha^1 h^1)^i$ are non-zero.  Write $i^1,\pi^1$, respectively, for the inclusion and porjection of the ${}^{\cR_n}G_*$ summand.

Define $\alpha_G^1=\pi^1\circ \alpha^1 \circ i^1$.  Then \cite[2.4]{Crainic} applies to show that ${}^{\cR_n}G_*$, with deformation $\alpha_G^1$ is homotopy equivalent to ${}_{\cR_n}M$.  

That is, since the free resolution  ${}^{\cR_n} \Lambda^{\cR_n}\boxtimes {}_{\cR_n}\tilde{M}$ has a deformation as in the lemma statement, then the minimal free resolution has such deformation as well.  Moreover, if ${}^{\cR_n} F_*$ is any free resolution, then using the decomposition as ${}^{\cR_n}F_*={}^{\cR_n} G_*\oplus {}^{\cR_n}T$, we may give ${}^{\cR_n}F_*$ a deformation $\alpha^1$ by setting $\alpha^1=\alpha_G^1$ on ${}^{\cR_n}G_*$, and zero on the ${}^{\cR_n}T_*$ factor, completing the proof.
\end{proof}

\begin{cor}
\label{cor:deformation-link-Floer}  Let $L\subset S^3$ be a link.
Write ${}^{\cR_n}\cCFL(L)$ for the link Floer complex, and write ${}_{\cR_n} \tilde{\cHFL}(L)$ for the homology group $\cHFL(L)$, viewed as a type-$A$ module with $m_j=0$ for $j\neq 2$. Write
${}^{\cR_n}\tilde{\cCFL}(L)$ for a free-resolution of ${}_{\cR_n} \tilde{\cHFL}(L)$. The complex ${}^{\cR_n}\cCFL(L)$ is homotopy equivalent to a deformation of ${}^{\cR_n}\tilde{\cCFL}(L)$. That is, there is some endomorphism $\a^1$ of ${}^{\cR_n}\tilde{\cCFL}(L)$ so that ${}^{\cR_n}\cCFL(L)$ is homotopy equivalent to 
\[
(\tilde{\cCFL}(L),\delta^1+\a^1).
\] Furthermore the morphism $\a^1$ may be taken to lower the resolution grading by at least 2. 
\end{cor}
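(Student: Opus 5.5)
The corollary follows from Lemma~\ref{lem:deformation} once its hypotheses are checked in the link Floer setting. We take ${}_{\cR_n} M := {}_{\cR_n}\cHFL(L)$, the homology of the link Floer complex equipped with the transferred $A_\infty$-module structure. Concretely, the left $\cR_n$-module $\cCFL(L) = {}_{\cR_n}[\cR_n]_{\cR_n}\boxtimes {}^{\cR_n}\cCFL(L)$ is a dg-module over $\cR_n$. Over the field $\bF$ we choose a splitting of it into homology plus an acyclic part, i.e.\ inclusion, projection and homotopy data.

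The homological perturbation lemma then produces $A_\infty$-actions $m_j$ on $\cHFL(L)$ with $m_1=0$, together with an $A_\infty$ homotopy equivalence
\[
{}_{\cR_n}\cHFL(L)\simeq {}_{\cR_n}[\cR_n]_{\cR_n}\boxtimes {}^{\cR_n}\cCFL(L).
\]
The action $m_2$ is the induced action on homology, so forgetting the $m_j$ with $j>2$ gives exactly the module ${}_{\cR_n}\tilde{\cHFL}(L)$ from the statement. This is precisely the setup of Lemma~\ref{lem:deformation}, with $\cC=\cCFL(L)$, $M=\cHFL(L)$ and $F_*=\tilde{\cCFL}(L)$.

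Applying the lemma gives a type-$D$ endomorphism $\a^1$ of $\tilde{\cCFL}(L)$ such that ${}^{\cR_n}\cCFL(L)\simeq(\tilde{\cCFL}(L),\delta^1+\a^1)$. The lemma also allows $\a^1$ to be chosen lowering the resolution grading by at least $2$. This is the full conclusion of the corollary.

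The step needing care is the finiteness and grading bookkeeping that makes the perturbation lemma and the box tensor products legitimate. The relevant facts are the following.
\begin{itemize}
\item $\cHFL(L)$ is finitely generated over $\cR_n$, and everything is graded by $(\gr_{\ws},\gr_{\zs})$ together with the Alexander multigrading.
\item The transferred $m_j$ have degree $(j-2,j-2)$ shifted appropriately, so in each fixed multigrading only finitely many terms contribute.
\item A finitely generated module over the polynomial ring $\cR_n$ admits a finite-length graded free resolution, by Hilbert's syzygy theorem, so a choice of $\tilde{\cCFL}(L)$ exists.
\end{itemize}
These facts supply the boundedness hypotheses implicitly used in the lemma, namely the bounded homotopy equivalence with the Koszul bimodule and the invertibility of $1-\a^1h^1$.
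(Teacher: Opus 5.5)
Your proposal is correct and matches the paper's proof: use the homological perturbation lemma to put an $A_\infty$-module structure on $\cHFL(L)$ (with $m_1=0$ and $m_2$ the induced action), then apply Lemma~\ref{lem:deformation} with $M=\cHFL(L)$ and $\cC=\cCFL(L)$. Your closing finiteness remarks are not needed, since the boundedness used inside the lemma comes from the Koszul bimodule $\Lambda$ being finite-dimensional and from the resolution grading being bounded below, not from properties of $\cHFL(L)$.
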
 
\begin{proof}
Using the homological perturbation lemma, one may equip the homology group ${}_{\cR_n} \tilde{\cHFL}(L)$ with an $A_\infty$-module structure, denoted ${}_{\cR_n} \cHFL(L)$, which is homotopy equivalent to ${}_{\cR_n} [\cR_n]_{\cR_n} \boxtimes {}^{\cR_n} \cCFL(L)$. We therefore may apply Lemma~\ref{lem:deformation} with $M=\cHFL(L)$ and $\cC=\cCFL(L)$.
\end{proof}

 \subsection{Deformations and obstructions}

 We now recall some basic notions about deformations and obstruction theory.   The results we prove are adaptations to the category of chain complexes of the standard formalism of $\cA_k$-algebras and modules which are used to inductively construct $\cA_\infty$-algebras and related structures. See, e.g. \cite{Seidel_HMS_Quartic}*{Section~3}. 
 
 We consider in this section the classical notion of a chain complex of $\cR$-modules, where $\cR$ is an associative ring. This consists of a $\Z$-graded $\cR$ module $X_*=\bigoplus_{n\in \Z} X_n$, equipped with a differential $\d \colon X_n\to X_{n-1}$ which squares to 0, as shown below.
 \[
 \begin{tikzcd}
 \cdots\ar[r,"\d"]& X_n \ar[r, "\d"] &X_{n-1} \ar[r, "\d"] & X_{n-2} \ar[r, "\d"]& \cdots
 \end{tikzcd}.
 \]
 Note that this is more restrictive than a type-$D$ structure over $\cR$, which may not have grading structure as shown above. We assume, as elsewhere, that $\cR$ is a vector space over $\bF=\Z/2$. To disambiguate the two notions, we will refer to a chain complex in the above sense as a \emph{classical chain complex} of $\cR$-modules. If $X_*$ and $Y_*$ are classical chain complexes, we write $\Hom^n_{\cR}(X_*,Y_*)$ for the set of $\cR$-module maps which send $X_i$ to $X_{i+n}$. Recall that the $\Hom^*_{\cR}(X_*,Y_*)$ forms a chain complex with differential
 \[
 d(f)=\d\circ f+f\circ \d.
 \]

 \begin{define} Let $\cR$ be an algebra over $\bF=\Z/2$. 
 \begin{enumerate}
 \item  A \emph{$\cC_1$-complex} over $\cR$ is a classical chain complex.
 \item A \emph{$\cC_n$-complex} consists of a tuple  $\cX_{n}=(X_*,\d_1,\dots, \d_n)$ where  $\d_i\colon X_*\to X_*$, $i=1,\dots, n$, form a collection of maps such that 
\[
	\sum_{\substack{ i+j\le n+1}} \d_i\circ \d_j=0.
\]
We furthermore assume that $\d_i$ has grading $-i$. 
%\item A $C_\infty$-complex consists of an infinite tuple $(X_*,\d_1,\d_2,\dots)$ such that the above compatibility relation is satisfied for all $n$.
\end{enumerate}
\end{define}

If $\cX_{n}$ and $\cY_{n}$ are $\cC_n$-complexes with underlying $\cC_1$-complexes $X_*$ and $Y_*$, we define the space of morphisms $\Hom_{\cC_n}^i(\cX_{n},\cY_{n})$ to consist of  collections of morphisms $f=(f_0,\dots, f_{n-1})$ such that $f_j\in \Hom_{\cR}^{i-j}(X_*,Y_*)$. There is a boundary operator on $\Hom_{\cC_n}^*(\cX_{n},\cY_{n})$ which is given by the formula
\[
(d_n(f))_k=\sum_{\substack{i+j=k+1\\ n-1\ge i\ge 0\\ n\ge j\ge 1}} (f_i\circ \d_j+\d_j\circ f_i), 
\]
for $k\in \{0,\dots, n-1\}.$ The composition of two morphisms $f$ and $g$ is given by the equation
\[
(g\circ f)_k=\sum_{i+j=k} g_i \circ f_j
\]
where $k\in \{0,\dots, n-1\}$. 
We leave it to the reader to verify that $d_n^2=0$ and $d_n(f\circ g)=d_n(f)\circ g+f\circ d_n(g)$.  The above definition turns the category of $\cC_{n}$ -complexes into a $dg$-category. In particular, there is a natural notion of homotopy equivalence for $\cC_n$-complexes.

Given a $\cC_n$-complex $\cX_{n}$, we define the \emph{obstruction class}  $\fro(\cX_{n})$ to be the morphism
\[
\fro(\cX_{n})=\sum_{\substack{i+j=n+2\\ 1<i,j}} \d_i\circ \d_j\in \Hom^{-n-2}_{\cR}(X_*,X_*).
\]

\begin{lem}\label{lem:basic_deformation_lemma} Let $\cR$ be an algebra over $\Z/2$. 
\begin{enumerate}
\item Given a $\cC_n$-complex $\cX_{n}=(X_*,\d_1,\d_2,\dots, \d_n)$ over $\cR$ for $n\ge 1$, the obstruction $\fro(\cX_{n})$ satisfies $d(\fro(\cX_{n}))=0$. Furthermore, the induced class $\fro(\cX_{n})\in H_* \Hom^{-n-2}_{\cR}(X_*,X_*)$  vanishes if and only if $\cX_{n}$ extends to a $\cC_{n+1}$-complex. An extension $\d_{n+1}$ can be chosen to be any map in $\Hom^{-n-1}_{\cR}(X_*,X_*)$ which satisfies$d(\d_{n+1})=\fro(\cX_{n})$. 
\item If a $\cC_n$-complex $\cX_{n}$ extends to $\cC_{n+1}$-complexes $\cX_{n+1}$ and $\cX_{n+1}'$ with endomorphisms $\d_{n+1}$ and $\d_{n+1}'$, respectively, the extensions $\cX_{n+1}$ and $\cX_{n+1}'$ are homotopy equivalent if $[\d_{n+1}+\d_{n+1}']=0\in H_* \Hom^{-n-1}_{\cC_1}(X_*,X_*)$. \label{itm:equivalent-complexes}
\end{enumerate}
\end{lem}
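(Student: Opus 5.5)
The plan is to verify everything directly from the defining relations, working one homogeneous degree at a time. Since $\d_i$ has grading $-i$, the single relation $\sum_{i+j\le n+1}\d_i\circ\d_j=0$ splits into the separate relations
\[
\sum_{i+j=k}\d_i\circ\d_j=0,\qquad 2\le k\le n+1.
\]
In characteristic $2$ these can be rewritten as
\[
d(\d_{k-1})=\d_1\d_{k-1}+\d_{k-1}\d_1=\sum_{\substack{i+j=k\\ i,j>1}}\d_i\d_j .
\]

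\textbf{Part (1), closedness.} I would compute
\[
d(\fro)=\sum_{\substack{i+j=n+2\\ i,j>1}}\bigl(\d_1\d_i\d_j+\d_i\d_j\d_1\bigr).
\]
Since $i,j\le n$, the relations above apply to both $\d_i$ and $\d_j$.
\begin{itemize}
\item In $\d_1\d_i\d_j$, replace $\d_1\d_i$ by $\d_i\d_1+\sum_{a+b=i+1,\ a,b>1}\d_a\d_b$.
\item In $\d_i\d_j\d_1$, replace $\d_j\d_1$ by $\d_1\d_j+\sum_{a+b=j+1,\ a,b>1}\d_a\d_b$.
\end{itemize}
After these substitutions, each term $\d_i\d_1\d_j$ with $i,j>1$ appears exactly twice, so these cancel. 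The remaining terms are triple products $\d_a\d_b\d_c$ with $a,b,c>1$ and $a+b+c=n+3$. Each such product arises exactly twice: once from the left bracketing $(\d_a\d_b)\d_c$ and once from the right bracketing $\d_a(\d_b\d_c)$. Hence everything cancels mod $2$, and $d(\fro)=0$. The only real bookkeeping in the whole lemma is the index ranges here, namely checking that every triple actually occurs in both bracketings.

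\textbf{Part (1), extension criterion.} A $\cC_{n+1}$-extension consists of the old relations plus the single new homogeneous relation in degree $n+2$. That relation reads $\d_1\d_{n+1}+\d_{n+1}\d_1+\fro(\cX_n)=0$, i.e. $d(\d_{n+1})=\fro(\cX_n)$, with $\d_{n+1}\in\Hom^{-n-1}_\cR$. So an extension exists if and only if $\fro$ is a boundary in this grading, and the admissible extensions are exactly the maps $\d_{n+1}$ with $d(\d_{n+1})=\fro(\cX_n)$.

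\textbf{Part (2).} Choose $h\in\Hom^{-n}_\cR(X_*,X_*)$ with $d(h)=\d_{n+1}+\d_{n+1}'$, and define the morphism $f=(\id,0,\dots,0,h)$ from $\cX_{n+1}$ to $\cX'_{n+1}$, with $f_n=h$. This is a degree-$0$ element of $\Hom_{\cC_{n+1}}$, because $f_j$ must lie in $\Hom^{-j}$. To check it is a cycle (using the mixed differential $f\circ\d+\d'\circ f$):
\begin{itemize}
\item For components $k<n$, only the identity contributes, giving $\d_{k+1}+\d'_{k+1}=0$, since the two structures agree below level $n+1$.
\item For $k=n$, we get $\d_{n+1}+\d'_{n+1}+\d_1h+h\d_1=0$, which holds by the choice of $h$.
\end{itemize}
The same $f$, viewed in the reverse direction, is also a cycle because we are in characteristic $2$. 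Composing, $(f\circ f)_k=\sum_{i+j=k}f_if_j$ equals $\id$ for $k=0$ and vanishes for $0<k\le n$, since $h\circ h$ lands in index $2n>n$. So $f$ is in fact an isomorphism of $\cC_{n+1}$-complexes, and in particular a homotopy equivalence.
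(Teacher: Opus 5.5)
Your proposal is correct and follows essentially the same route as the paper, which also commutes $\d_1$ past $\fro(\cX_n)$ using the lower-level relations so that the triple products $\d_a\d_b\d_c$ cancel in pairs, reads the new degree-$(n+2)$ relation as $d(\d_{n+1})=\fro(\cX_n)$, and uses the same self-inverse isomorphism $f=(\id,0,\dots,0,h)$ for part (2). One small correction to your justification: the component $(f\circ f)_n=f_0\circ h+h\circ f_0=h+h$ vanishes because we are in characteristic $2$, not because $h\circ h$ lands in index $2n$; that index count only explains why $h\circ h$ itself does not appear.
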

\begin{proof} We begin with the proof that $\fro(\cX_{n})$ is a cycle. We compute that
\[
\begin{split}\d_1 \circ \fro(\cX_{n})&=\sum_{\substack{i+j=n+2\\i,j>1}} \d_1 \circ \d_i\circ \d_j\\
&=\sum_{\substack{k+m+j=n+3\\k,m,j>1}} \d_k \circ \d_m \circ \d_j+\sum_{\substack{i+j=n+2\\i,j>1}}   \d_i\circ\d_1\circ \d_j\\
&=\sum_{\substack{k+m+j=n+3\\k,m,j>1}} \d_k \circ \d_m \circ \d_j+\sum_{\substack{i+k+m=n+3\\ i,k,m>1}} \d_i \circ \d_k \circ \d_m+\sum_{\substack{i+j=n+2\\i,j>1}}   \d_i\circ \d_j\circ \d_1\\
&=\sum_{\substack{i+j=n+2\\i,j>1}}   \d_i\circ \d_j\circ \d_1\\
&=\fro(\cX_{n})\circ \d_1.
\end{split}
\]

 Next, if there exists $\partial_{n+1}$ as in the definition of an $\cC_{n+1}$-complex, then it follows from the definition that $d(\partial_{n+1}):=\d_1 \circ \d_{n+1}+\d_{n+1}\circ \d_1=\fro(\cX_{n})$, so the obstruction class is zero on homology.  Similarly, any morphism $\varphi\in \Hom^{-n-1}_{\cR}(X_*,X_*)$ with $d(\varphi)=\fro(\cX_{n})$ can be used to extend $\cX_{n}$ to a $\cC_{n+1}$-complex.  
 
 For the second claim, suppose that $\d_{n+1}$ and $\d_{n+1}'$ are two choices of endomorphisms which make $\cX_{n}$ into an $\cC_{n+1}$-complex. Write $\cX_{n+1}$ and $\cX_{n+1}'$ for the two extensions. Let $\eta\in \Hom_{\cR}^{-n}(X_*,X_*)$ be a map such that $d(\eta)=\d_{n+1}+\d_{n+1}'$. Then we can define a morphism $f\colon \cX_{n+1}\to \cX_{n+1}'$ by $f_0=\id$, $f_{n}=\eta$ and $f_i=0$ for $i\neq 0,n$. It is easy to see that $d_{n+1}(f)=0$. Furthermore the same definition gives a map from $\cX_{n+1}'$ to $\cX_{n+1}$ which we denote by $g$. Clearly $f\circ g=\id$ and $g\circ f=\id$ so $\cX_{n+1}$ and $\cX_{n+1}'$ are isomorphic.
\end{proof}

\begin{example} If $\cX_{1}$ is a $\cC_1$-complex, the obstruction $\fro(\cX_{1})$ vanishes, by convention. The second map $\d_2$ is required therefore only to satisfy $d_1(\d_2):=\d_1\circ \d_2+\d_2\circ \d_1=0$. If $\cX_{2}$ is a $\cC_2$-complex, then the obstruction class is $\d_2\circ \d_2$ and the map $\d_3$ is required to satisfy $d(\d_3):=\d_1\circ \d_3+\d_3\circ \d_1=\d_2\circ \d_2$. 
\end{example}

 \begin{rem} When $X_*$ is a free-resolution of a $\cR$-module $M$, the group $H_* \Hom_{\cR}^{-n-1}(X_*,X_*)$ coincides with $\Ext^{n+1}_{\cR}(M,M)$.
 \end{rem}

 We now discuss pushing the obstruction class along morphisms of $\cC_n$ complexes.
 
 \begin{lem}
 \label{lem:obstruction-class-homotopy-commutes} Suppose that $f\in \Hom_{\cC_n}^0(\cX_{n}, \cY_{n})$ is a degree 0 morphism of $\cC_n$-complexes with $d_n(f)=0$. Then $f_0\circ \fro(\cX_{n})\simeq \fro(\cY_{n})\circ f_0$. In particular, if $f$ is a homotopy equivalence of $\cC_n$ complexes, then $\cX_{n}$ is extendable if and only if $\cY_{n}$ is extendable.
 \end{lem}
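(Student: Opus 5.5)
We expand the closedness condition $d_n(f)=0$ degree by degree and use its lower-order components to produce an explicit homotopy between $f_0\circ\fro(\cX_{n})$ and $\fro(\cY_{n})\circ f_0$. Write $\d_j$ for the maps of $\cX_{n}$ and $\d_j'$ for those of $\cY_{n}$. Then $d_n(f)=0$ says that for each $k\in\{0,\dots,n-1\}$,
\[
\sum_{\substack{i+j=k+1\\ i\ge 0,\ j\ge 1}} \left(f_i\circ \d_j+\d_j'\circ f_i\right)=0.
\]
For $k=0$ this says $f_0$ is a chain map of the underlying $\cC_1$-complexes. The higher components say that $f$ intertwines $\d_*$ and $\d'_*$ up through order $n$.

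The natural candidate for the homotopy is
\[
H=\sum_{\substack{i+j+l=n+1\\ i\ge 1,\ j,l\ge 1}} f_i\circ\d_j\circ\d_l \;+\; \sum_{\substack{i+j+l=n+1\\ i\ge 1,\ j,l\ge 1}} \d_j'\circ\d_l'\circ f_i,
\]
possibly adjusted by mixed terms of the form $\d'_j\circ f_i\circ\d_l$. The guiding identity is the formal statement that $(\sum f_i)\circ(\sum\d_j)^2=(\sum \d_j')^2\circ(\sum f_i)$ modulo terms of total order $>n+1$. This follows because $\sum f_i$ intertwines $\sum\d_j$ and $\sum\d'_j$ up to order $n$, and both $(\sum\d_j)^2$ and $(\sum\d'_j)^2$ vanish through order $n+1$. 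We then extract the order-$(n+2)$ part of this identity.

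Concretely, we compute $\d_1'\circ H+H\circ\d_1$. We use the $\cC_n$ relations $\d_1\d_m+\d_m\d_1=\sum_{a+b=m+1,\,a,b>1}\d_a\d_b$ for $m\le n$ to move $\d_1$ past the $\d$'s. We use the components of $d_n(f)=0$ to move $\d_1$ and $\d'_1$ past the $f_i$'s. This should telescope to $f_0\circ\fro(\cX_{n})+\fro(\cY_{n})\circ f_0$, exactly parallel to the computation in Lemma~\ref{lem:basic_deformation_lemma} showing that $\fro$ is a cycle. The bookkeeping step is the main obstacle. One has to choose the right set of terms in $H$, namely all words $\d'\cdots\d'\,f_i\,\d\cdots\d$ with two $\d$-letters of index $>1$ (counted appropriately) and total degree $-n-1$. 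Then one must check that the terms with indices beyond the range controlled by the relations cancel in pairs. The range condition $i\le n-1$, $j\le n$ in the morphism relations is precisely what makes the order-$(n+2)$ terms the only survivors. We would first try $H=\sum f_i\circ\d_j\circ\d_l+\d'_j\circ f_i\circ \d_l+\d'_j\circ\d'_l\circ f_i$ over $i\ge1$, $j,l>1$, $i+j+l=n+2$, and verify the cancellation by grouping by the position of the $\d_1$-type letter.

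For the final statement, suppose $f$ is a homotopy equivalence with inverse $g$. By Lemma~\ref{lem:basic_deformation_lemma}, $\cX_{n}$ is extendable if and only if $[\fro(\cX_{n})]=0$. If $[\fro(\cY_{n})]=0$, then $g_0\circ\fro(\cY_{n})\circ f_0\simeq g_0 f_0\circ \fro(\cX_{n})$. Since $g\circ f\simeq \id$ in the $\cC_n$ sense, the $0$-th component gives $g_0f_0\simeq\id$ as $\cC_1$-maps. Hence $\fro(\cX_{n})\simeq 0$. The converse is symmetric.
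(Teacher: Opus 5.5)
\textbf{The gap is in the first part: you never produce a working homotopy, and both candidates you write down fail.}

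\emph{First candidate.} Your first $H$, summed over $i\ge 1$, $j,l\ge 1$, $i+j+l=n+1$, is identically zero. For each $i\ge 1$ the inner sum $\sum_{j+l=n+1-i}\d_j\circ\d_l$ is one of the $\cC_n$-relations, and the same holds for the $\d'$-terms.

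\emph{Second candidate.} Your second $H$, summed over $i\ge 1$, $j,l>1$, $i+j+l=n+2$, has degree $-(n+2)$. That is the same degree as $\fro(\cX_n)$. So $\d_1'\circ H+H\circ\d_1$ has degree $-(n+3)$ and cannot equal $f_0\circ\fro(\cX_n)+\fro(\cY_n)\circ f_0$.

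\emph{Case $n=2$.} Here your families reduce to $\d_1'\circ f_1\circ\d_1$, whose boundary is zero, and to the empty sum. The homotopy that actually works is $f_1\circ\d_2+\d_2'\circ f_1$. The shape you propose is wrong: the correct homotopy consists of words with one $f_i$ ($i\ge 1$) and one $\d$-letter of index $\ge 2$, not two.

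\emph{The heuristic as stated.} An identity that holds only modulo terms of order $>n+1$ says nothing at order $n+2$. At that order, $f_0\circ\fro(\cX_n)$ and $\fro(\cY_n)\circ f_0$ genuinely differ, so there is nothing to ``extract''.

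\textbf{How to repair it.} Make your heuristic exact. Set $F=\sum_i f_i$, $D=\sum_j\d_j$, $D'=\sum_j\d_j'$. In characteristic $2$ one has the exact identity
\[
FD^2+D'^2F=(FD+D'F)\,D+D'\,(FD+D'F).
\]
Take order-$(n+2)$ components of both sides.
\begin{enumerate}
\item On the left, the terms $f_i\circ\sum_{j+l=n+2-i}\d_j\circ\d_l$ with $i\ge 1$ vanish by the $\cC_n$-relations, and similarly on the $\d'$ side. What remains is $f_0\circ\fro(\cX_n)+\fro(\cY_n)\circ f_0$.
\item On the right, $E=FD+D'F$ vanishes in orders $\le n$, which is exactly the content of $d_n(f)=0$. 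What remains is $\d_1'\circ E_{n+1}+E_{n+1}\circ\d_1$, where
\[
E_{n+1}=\sum_{\substack{i+j=n+1\\ i\ge 1,\ j\ge 2}}\left(f_i\circ\d_j+\d_j'\circ f_i\right).
\]
\end{enumerate}
This $E_{n+1}$ is precisely the paper's $\fro(f)$.

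\textbf{Comparison with the paper.} The paper reaches the same identity by packaging it differently. It forms $\Cone(f)$ with $\d_i^{\Cone}=\d_i+\d_i'+f_{i-1}$. It notes that $d_n(f)=0$ says exactly that this is a $\cC_n$-complex. It then reads off the $X_*\to Y_*$ component of the cycle condition for $\fro(\Cone(f))$ from Lemma~\ref{lem:basic_deformation_lemma}.

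Your argument for the ``in particular'' clause is fine: it uses $g_0\circ f_0\simeq\id$, which comes from the $0$-th component of $g\circ f\simeq\id$.
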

 \begin{proof} If $f\colon \cX_{n}\to \cY_{n}$ is in $\Hom_{\cC_n}^0(\cX_{n},\cY_{n})$ and is a cycle, we can consider $\Cone(f)$, whose underlying complexes are $X_*\oplus Y_{*}[-1]$. This naturally forms a $\cC_n$-complex with differential 
 \[
 \d_i^{\Cone}=\d_i^{X}+\d_i^Y+f_{i-1}.
 \] The obstruction class $\fro(\Cone)$ may be identified as the sum $\fro(\cX_{n})+\fro(\cY_{n})+\fro(f)$ where $\fro(f)\in \Hom_{\cR}^{-n-1}(X_*,Y_*)$
 is the map
 \[
 \fro(f):=\sum_{\substack{1<i< n+1\\ 0<j<n \\j+i=n+1}} (\d_{i}' \circ f_j+f_j\circ \d_i),
 \]
 where $\d_i$ are the differentials from $\cX_n$ and $\d_i'$ are the differentials from $\cY_n$.
 
  By the first part of Lemma~\ref{lem:basic_deformation_lemma}, we know that
 \[
 \d_1^{\Cone}\circ \fro(\Cone)+\fro(\Cone)\circ \d_1^{\Cone}=0.
 \]
 Translating this to our present setting, this reads exactly that
 \begin{equation}
 f_0 \circ \fro(\cX_{n})+\fro(\cY_{n})\circ f_0+\d_1' \circ \fro(f)+\fro(f) \circ \d_1=0,
 \label{eq:f0-fro-homotopy-commute}
 \end{equation}
 which proves the claim.
 \end{proof}

 If $\cX_{n}$ is a $\cC_n$-complex which extends to a $\cC_{n+1}$ complex, write $\cE(\cX_{n})$ for the set of set of extensions of $\cX_{n}$, up to chain homotopy.

 \begin{lem}\label{lem:push-deformation} Suppose that $f\colon \cX_{n}\to \cY_{n}$ is an isomorphism of $\cC_n$-complexes which both extend to $\cC_{n+1}$ complexes. Then $f$ induces a canonical map $f_*\colon \cE(\cX_{n})\to \cE(\cY_{n})$. Furthermore, if $\d_{n+1}$ is an extension of $\cX_{n}$ and $f_*(\d_{n+1})$ is the corresponding extension of $\cY_{n}$, then $f$ extends to a homotopy equivalence from $\cX_{n+1}=(X_*,\d_1,\dots,, \d_{n+1})$ to $\cY_{n+1}=(Y_*,\d_1',\dots, f_*(\d_{n+1}))$.
 \end{lem}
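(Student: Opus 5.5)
The plan is to transport extensions along $f$ by conjugation. Since $f\colon \cX_n\to\cY_n$ is an isomorphism of $\cC_n$-complexes, it has an inverse $g=f^{-1}$, characterized by $g\circ f=\id$ and $f\circ g=\id$ in the composition law for $\cC_n$-morphisms. In particular $f_0$ is invertible with inverse $g_0$, and each $g_k$ is determined recursively from the $f_i$. Both $f$ and $g$ are $d_n$-cycles.

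Given an extension $\d_{n+1}$ of $\cX_n$, consider the tuple $(f_0,\dots,f_{n-1},f_n)$ with $f_n$ still to be chosen, viewed as a candidate morphism $\cX_{n+1}\to\cY_{n+1}$. The cycle condition $d_{n+1}(f)=0$ holds in levels $0,\dots,n-1$, because $f$ is already a $d_n$-cycle and those levels only involve $\d_j$ with $j\le n$. So the only new condition is at level $n$. Written out, it reads
\[
\d_1'\circ f_n+f_n\circ\d_1+\d'_{n+1}\circ f_0+f_0\circ\d_{n+1}+\fro(f)=0,
\]
where $\fro(f)=\sum_{i+j=n+1,\,1<i,\,0<j<n}(\d_i'\circ f_j+f_j\circ\d_i)$ is the term appearing in the proof of Lemma~\ref{lem:obstruction-class-homotopy-commutes}.

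Choose $f_n=0$ and define
\[
f_*(\d_{n+1}):=g_0\circ\bigl(f_0\circ\d_{n+1}+\fro(f)\bigr)
\]
(up to the sign conventions, which are trivial in characteristic 2). With this choice the level-$n$ equation is satisfied by construction.

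Three things then need checking.

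\begin{enumerate}
\item \emph{$f_*(\d_{n+1})$ is a valid extension of $\cY_n$.} It has the correct degree $-n-1$, and its $d$-boundary should be $\fro(\cY_n)$. To see this, apply $d$ to the level-$n$ equation and use equation~\eqref{eq:f0-fro-homotopy-commute}. Specifically, $d(\d_{n+1})=\fro(\cX_n)$ together with $f_0\fro(\cX_n)+\fro(\cY_n)f_0=d(\fro(f))$ gives $d(f_0\d_{n+1}+\fro(f))=\fro(\cY_n)f_0$. Note that $f_0$ is a $\d_1$-chain isomorphism, so conjugation by it commutes with $d$. Composing with $g_0$ gives $d(f_*(\d_{n+1}))=\fro(\cY_n)$, as needed. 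This bookkeeping is the main place where care is required.

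\item \emph{$f$ extends to a homotopy equivalence.} The extended $f$ is an isomorphism of $\cC_{n+1}$-complexes. Its inverse is obtained by extending $g$ by a suitable $g_n$, solved recursively from $(g\circ f)_n=0$; an upper-triangular inverse exists because $f_0$ is invertible. One then checks that this $g$ is a cycle, which follows formally from $f$ being a cycle and invertible, since cycles are closed under composition and inverses of cycles are cycles.

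\item \emph{The map $f_*$ is well defined on $\cE(\cX_n)$.} If $\d_{n+1}'=\d_{n+1}+d(\eta)$, then $f_*$ changes by $g_0 f_0\, d(\eta)$ conjugated appropriately, which is $d(f_0\eta g_0)$-exact up to conjugation. By Lemma~\ref{lem:basic_deformation_lemma}\eqref{itm:equivalent-complexes}, the two images therefore give homotopy equivalent extensions. Canonicity follows because the formula involves no choices beyond $f$.

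\item \emph{Bijectivity (if desired).} Applying the same construction with $g$ should give the inverse map $g_*$. This follows since composing the extended isomorphisms yields a cycle isomorphism whose level-$0$ part is the identity.
\end{enumerate}
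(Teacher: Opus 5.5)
Your strategy is the paper's: take $f_n=0$, solve the level-$n$ cycle equation for the new top map using $g_0=f_0^{-1}$, and use Equation~\eqref{eq:f0-fro-homotopy-commute} to see that the result is a null-homotopy of $\fro(\cY_n)$. However, your defining formula composes with $g_0$ on the wrong side. The map $g_0\circ(f_0\circ\d_{n+1}+\fro(f))=\d_{n+1}+g_0\circ\fro(f)$ is an endomorphism of $X_*$, not of $Y_*$. So it is not a candidate extension of $\cY_n$ at all, and the term $f_*(\d_{n+1})\circ f_0$ in your level-$n$ equation does not even typecheck.

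With $f_n=0$ the equation to solve is $\d_{n+1}'\circ f_0=f_0\circ\d_{n+1}+\fro(f)$, so you must multiply by $g_0$ on the right:
\[
f_*(\d_{n+1})=\bigl(f_0\circ\d_{n+1}+\fro(f)\bigr)\circ g_0.
\]
This is exactly the paper's formula. Your check in item (1) only works for this version: from $d(f_0\circ\d_{n+1}+\fro(f))=\fro(\cY_n)\circ f_0$, and the fact that $g_0$ is a $\d_1$-chain map, you get $d(f_*(\d_{n+1}))=\fro(\cY_n)\circ f_0\circ g_0=\fro(\cY_n)$. Likewise, in item (3) the change under $\d_{n+1}\mapsto \d_{n+1}+d(\eta)$ is $f_0\circ d(\eta)\circ g_0=d(f_0\circ\eta\circ g_0)$, not $g_0\circ f_0\circ d(\eta)$.

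Once this is corrected, your argument coincides with the paper's proof. Your items (2)--(4) then fill in points the paper leaves implicit: the explicit inverse of the extended $f$, independence of the choice of representative, and bijectivity of $f_*$ via $g_*$.
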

 \begin{proof} Let $g\colon \cY_{n}\to \cX_{n}$ be an inverse to $f$. We derive the formula for $f_* (\d_{n+1})$. Write $\d_{n+1}'$ for $f_*( \d_{n+1})$. For $f$ to extend to a morphism of $\cC_{n+1}$-complexes, the desired relation is
 \begin{equation}
 0=\d_{n+1}'\circ f_0+f_0\circ \d_{n+1}+\d_1'\circ f_{n}+f_n \circ\d_1+\sum_{\substack{1<i< n+1\\ 0<j<n \\j+i=n+1}} (\d_{i}' \circ f_j+f_j\circ \d_i)
 \label{eq:equation-induced-map-on-extensions}
 \end{equation}
 We select $f_n=0$ and
 \[
 \d_{n+1}'=f_*(\d_{n+1})=f_0\circ \d_{n+1}\circ g_0+\sum_{\substack{1<i< n+1\\ 0<j<n \\j+i=n+1}} (\d_{i}' \circ f_j+f_j\circ \d_i)\circ g_0.
 \]
 Since $f_0\circ g_0=\id$, this choice clearly satisfies Equation~\eqref{eq:equation-induced-map-on-extensions}. Assuming that $\d_{n+1}'$ is an extension, it would follow that $f$ extends to an isomorphism from $\cX_{n+1}$ to $\cY_{n+1}$.

 It suffices therefore to show that $\d_{n+1}'$ extends $\cY_{n}$ to a $\cC_{n+1}$-complex. It suffices to show that $\d_{n+1}'$ is a nullhomotopy of $\fro(\cY_{n})$. We apply Equation~\eqref{eq:f0-fro-homotopy-commute} from the proof of Lemma~\ref{lem:obstruction-class-homotopy-commutes}, which shows that
 \[
 f_0\circ \fro(\cX_{n})+\fro(\cY_{n})\circ f_0+\d_1' \circ \fro(f)+\fro(f)\circ \d_1=0.
 \]
Composing with $g_0$ on the right, we obtain
\[
\fro(\cY_{n})=f_0\circ \fro(\cX_{n}) \circ g_0+ \d_1' \circ \fro(f)\circ g_0+\fro(f)\circ g_0\circ \d_1.
\]
Since $\d_{n+1}$ is a null-homotopy of $\fro(\cX_n)$, a null-homotopy of $\fro(\cY_n)$ is given by $f_0\circ \d_{n+1}\circ g_0+ \fro(f)\circ g_0$, which is our definition of $\d_{n+1}'$. 
  \end{proof}

  \begin{prop}
  \label{prop:unique-deformation} Suppose that $M$ is an $\cR_n=\bF[W_1,Z_1,\dots, W_n,Z_n]$ is a module which has a $(\gr_{\ws},\gr_{\zs})$ grading which takes values in $\Z\times \Z$. Let $F_*$ be a bounded free-resolution of $M$ (every $M$ admits such a resolution, e.g. the Koszul resolution).
  \begin{enumerate}
  \item  Suppose that $\Ext^i_{(-1,-1),\cR_n}(M,M)$ vanishes for all $i>1$. Then all $\cC_\infty$-deformations of $M$ are trivial, up to homotopy equivalence.  Here. $\Ext^i_{(-1,-1),\cR_n}(M,M)$ denotes the subgroup of the full $\Ext^i_{\cR_n}(M,M)$ group which lies in $(\gr_{\ws},\gr_{\zs})$-grading $(-1,-1)$.
  \item   Suppose that there is a number $m>1$ such that
 \[
 \Ext^i_{(-1,-1),\cR_n}(M,M)=\begin{cases}0 & \text{ if } 1<i, i\neq m\\
 \bF& \text{ if } i=m.
 \end{cases}
 \]
 Then, up to equivalence, there is a unique non-trivial $\cC_\infty$-deformation of
 $F_*$. 
 \end{enumerate}
 \end{prop}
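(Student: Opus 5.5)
The plan is to work with the total operator $D=\d_1+\d_2+\d_3+\cdots$ on $F_*$, where $\d_1=\delta^1$ is the resolution differential and $\d_i$ lowers the resolution grading by exactly $i$. A $\cC_\infty$-deformation is the condition $D^2=0$, graded piece by graded piece. Since $F_*$ is bounded, say $F_j=0$ for $j>N$, every $\d_i$ with $i>N$ vanishes. Likewise any endomorphism $\eta$ that lowers the resolution grading by at least $1$ is nilpotent, so $1+\eta$ is invertible. I will use gauge transformations $D\mapsto (1+\eta)D(1+\eta)^{-1}$, with $\eta\in\Hom^{-k}$ of the appropriate $(\gr_{\ws},\gr_{\zs})$-grading. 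This is the all-orders version of the isomorphism $f_0=\id$, $f_k=\eta$ in Lemma~\ref{lem:basic_deformation_lemma}(\ref{itm:equivalent-complexes}), combined with the transport of Lemma~\ref{lem:push-deformation}. It produces a deformation isomorphic to the original, whose $\d_i$ are unchanged for $i\le k$ and with $\d_{k+1}$ replaced by $\d_{k+1}+d(\eta)$.

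For part (1), I will show by induction on $n$ that every deformation is isomorphic to one with $\d_2=\cdots=\d_n=0$. Suppose $\d_2=\cdots=\d_n=0$. Then the $\cC_{n+1}$-relation reduces to $d(\d_{n+1})=\fro=0$, so $\d_{n+1}$ is a cycle in $\Hom^{-n-1}_{\cR_n}(F_*,F_*)$. Its homology class lies in $\Ext^{n+1}_{(-1,-1),\cR_n}(M,M)$, by the Remark identifying $H_*\Hom^{-n-1}$ with $\Ext^{n+1}$, together with the fact that all $\d_i$ have bigrading $(-1,-1)$. This group vanishes because $n+1>1$. Hence $\d_{n+1}=d(\eta)$ for some $\eta\in\Hom^{-n}$ of the matching grading, and the gauge by $(1+\eta)$ kills $\d_{n+1}$ without changing $\d_1,\dots,\d_n$. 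The induction stops at $n=N$ by boundedness, and composing the finitely many gauge transformations gives an isomorphism to $(F_*,\d_1)$.

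For part (2), I run the same induction. It shows that any deformation is isomorphic to one with $\d_2=\cdots=\d_{m-1}=0$, with $\d_m$ a cycle whose class $[\d_m]$ lies in $\Ext^m_{(-1,-1)}\cong\bF$.
\begin{itemize}
\item If $[\d_m]=0$, I gauge $\d_m$ away and continue as in part (1), so the deformation is trivial.
\item Otherwise I fix a cycle representative $c$ of the generator and gauge so that $\d_m=c$.
\end{itemize}
For the higher terms I use Lemma~\ref{lem:basic_deformation_lemma} in both directions.
\begin{itemize}
\item \emph{Existence.} For $n\ge m$, the obstruction $\fro(\cX_n)$ represents a class in the Ext group of index $n+2>m$ (in the relevant grading), which vanishes. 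So the truncation $(F_*,\d_1,0,\dots,0,c)$ extends step by step, and by boundedness this eventually gives a genuine nontrivial deformation.
\item \emph{Uniqueness.} Two extensions of the same $\cC_n$-complex with $n\ge m$ differ by a cycle whose class lies in $\Ext^{n+1}_{(-1,-1)}=0$ (since $n+1>m$). So at each stage a gauge by $1+\eta$ with $\eta\in\Hom^{-n}$ matches the two choices, and it does not disturb the lower terms.
\end{itemize}
Since the field is $\bF$, the only nonzero class is the generator. Hence any two nontrivial deformations are isomorphic, and a nontrivial one is not equivalent to the trivial one, because $[\d_m]\neq 0$ is a gauge invariant at the first nonvanishing order.

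The main obstacle is grading bookkeeping. The compositions $\d_i\circ\d_j$ have bigrading $(-2,-2)$, and one must check that, with the paper's conventions for the morphism complex of type-$D$ structures (the morphism differential shifts grading by one), the obstruction class is measured by the same hypothesis on $\Ext^{i}_{(-1,-1)}$. Equivalently, the hypothesis must also cover the grading in which obstructions live. I would first try to settle this by regrading $\Hom$ so that $d$ and each $\d_i$ are homogeneous of a common degree. If the groups turn out to differ, I would state the vanishing for both gradings.

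A second, smaller check is that the gauge transformations are $\cR_n$-linear, preserve the $\Z\times\Z$ grading, and converge. Convergence follows from nilpotence on the bounded $F_*$.
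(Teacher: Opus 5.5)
Your route is the paper's route. The gauge transformations $1+\eta$ are exactly the isomorphisms with $f_0=\id$, $f_k=\eta$ from part (2) of Lemma~\ref{lem:basic_deformation_lemma}, combined with Lemma~\ref{lem:push-deformation}. Your part (1) and the uniqueness half of part (2) are correct, and in fact produce isomorphisms, not just homotopy equivalences.

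\textbf{The gap: existence in part (2).} This is exactly where you suspected trouble, and it is genuine. Each $\d_i$ has bidegree $(-1,-1)$. So $\fro(\cX_n)$, a sum of composites of two of them, represents a class in the $(-2,-2)$ summand of $\Ext^{n+2}_{\cR_n}(M,M)$. The hypothesis only controls $(-1,-1)$ summands.

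Regrading cannot repair this, since it only relabels summands. For example, in the grading where $\d_1$ preserves degree (the Macaulay2 convention of Section~\ref{sec:Ext-groups}), $\d_{n+1}$ and $\fro(\cX_n)$ both have degree $(n,n)$. But the hypothesis on $\Ext^{n+2}$ concerns degree $(n+1,n+1)$.

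Concretely, once you have gauged to $\d_2=\cdots=\d_{m-1}=0$ and $\d_m=c$, the first obstruction that can be nonzero is $c\circ c$, at $n=2m-2$. Its class is the Yoneda square $[c]^2\in\Ext^{2m}_{(-2,-2)}(M,M)$, and nothing in the statement forces it to vanish. The paper's proof has the same slip: it asserts that the obstruction is an element of $\Ext^{m+2}_{(-1,-1)}$. So from the stated hypotheses, both your argument and the paper's only show that there is \emph{at most one} nontrivial deformation up to equivalence.

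\textbf{How to close it.} You need extra input of one of two kinds.
\begin{enumerate}
\item A vanishing hypothesis on the obstruction classes, for instance $\Ext^k_{(-2,-2)}(M,M)=0$ for $k\ge 2m$.
\item A length bound. If $F_*$ has length less than $2m$, then $c\circ c\in\Hom^{-2m}(F_*,F_*)=0$. Hence $D=\d_1+c$ already satisfies $D^2=d(c)+c^2=0$.
\end{enumerate}
The Borromean rings fall under the second option: $m=3$ and the resolution is $F_0,\dots,F_5$. There, existence is in any case witnessed by $\cCFL(\cB)$ itself and by the explicit $\d_3$, so Theorem~\ref{thm:main-computation} is unaffected.

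\textbf{A smaller point: non-equivalence.} Gauge invariance of $[\d_m]$ only rules out isomorphisms with $f_0=\id$, whereas the statement concerns homotopy equivalence. Use the paper's argument instead. Take a $\cC_\infty$-morphism $f$ from $(F_*,\d_1,0,\dots,0,c,\dots)$ to $(F_*,\d_1,0,\dots)$. The order-$m$ component of $d(f)=0$ reads $f_0\circ c=\d_1 f_{m-1}+f_{m-1}\d_1$. If $f$ is a homotopy equivalence, then $f_0$ is a homotopy equivalence of $(F_*,\d_1)$, and this forces $[c]=0$.
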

 \begin{proof}
 We consider the first claim, where we assume that $\Ext^{i}_{(-1,-1), \cR_n}(M,M)$ vanishes for all $i>1$. We will show that $(F_*,\delta^1+\b_*)$ is homotopy equivalent to $(F_*,\delta^1)$ as a $\cC_\infty$-complex. 
 
 By truncating $(F_*,\delta^1+\b_*)$, we obtain a sequence of $\cC_j$-complexes $\tilde{\cF}_j$. Write $\cF_j$ for the truncation of $(F_*,\delta^1)$. Clearly $\cF_1$ and $\tilde{\cF}_1$ are identical. Note that $\tilde{\cF}_j$ is constant for $j\gg 0$ since $F_*$ is bounded. Proceeding by induction, we suppose that we have an isomorphism $\tilde{\cF}_j\simeq \cF_j$ for some $j$. Lemma~\ref{lem:push-deformation} gives an extension $\d_{j+1}$ of $\cF_j$ such that $\tilde{\cF}_j\simeq(F_*,\d_1, \d_{j+1})$. By Lemma~\ref{lem:basic_deformation_lemma}, the vanishing of the appropriate $\Ext$ groups implies that $(F_*,\d_1,\d_{j+1})$ is homotopy equivalent to $\cF_{j+1}= (F_*, \d_1,0)$. Proceeding by induction, we obtain $\tilde{\cF}_{N}\simeq \cF_{N}$ as $\cC_{N}$ complexes (for any $N$). Since $F_*$ is bounded, once $N$ is larger than the maximum resolution grading of $F_*$, the higher length relations must also be satisfied for these complexes and the maps between them, so $(F_*,\delta^1+\b_*)$ and $F_*$ are homotopy equivalent as $\cC_\infty$-complexes.

 The proof of the second claim is similar to the first. We first construct a $\cC_\infty$-deformation $\tilde{\cF}$ of $(F_*,\d_1)$. To do this, we let 
 \[
 \b_m\in \Hom_{(-1,-1),\cR_n}^{-m}(F_*,F_*)
 \] represent the unique non-zero class in  $\Ext^m_{(-1,-1),\cR_n}(M,M)$. We construct a $\cC_m$-complex as $\cF_m:=(F_*,\d_1,\b_m)$. The obstruction class $\fro(\cF_m)$ vanishes (up to chain homotopy) because it is an element of $\Ext^{m+2}_{(-1,-1),\cR_n}(M,M)$, which is zero by assumption, and therefore $\cF_m$ extends to a $\cC_{m+1}$-complex. Continuing in this manner, we may extend $\cF_m$ to a $\cC_\infty$-complex $\tilde{\cF}=(F_*,\d_1+\b_*)$ over $\cR_n$.
 
 The same argument as in the case where all the $\Ext$ groups vanish shows that every $\cC_\infty$ deformation of $F_*$ is homotopy equivalent to one of $\cF$ or $\tilde{\cF}$.

We now claim that $\cF$ and $\tilde{\cF}$ are not equivalent as $\cC_\infty$-complexes. It suffices to show that the truncations $\cF_m$ and $\tilde{\cF}_m$ are not-equivalent. Consider a morphism $f=(f_0,\dots, f_{m-1})$ between $\tilde{\cF}_m=(F_*,\d_1,\b_m)$ and $\cF_m=(F_*,\d_1,0)$. For this to be a chain map, we must have
 \[
 f_0\circ  \b_m=\d_1\circ f_{n-1}+f_{n-1}\circ \d_1. 
 \] 
If $f$ is a homotopy equivalence then $f_0$ must also be a homotopy equivalence, so $f_0\circ \b_m\simeq 0$ as a chain endomorphism of $(F_*,\d_1)$, which implies that $\b_m\simeq 0$, which is a contradiction.
 \end{proof}
 
 We now prove Theorem~\ref{thm:L-space-ext-groups-trivial} of the introduction, restated below:
 
 \begin{thm}\label{thm:Vanishing-Ext-Groups-Critereon} Suppose that $L$ is an L-space link in $S^3$ and that
 \[
 \Ext_{(-1,0,\dots,0),\cR_n}^i(\cHFL(L),\cHFL(L))=0
 \]
   for all $i>1$. In the above, $\Ext_{(-1,0,\dots ,0),\cR_n}^i$ denotes the subspace of the standard $\Ext$ group of $\cR_n$-modules which lies in $\gr_{\ws}$-grading $-1$ and Alexander grading $(0,\dots ,0)$. Then $\cCFL(L)$ is homotopy equivalent over $\cR_n$ to a free-resolution if its homology.
 \end{thm}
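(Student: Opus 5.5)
The plan is to combine Corollary~\ref{cor:deformation-link-Floer} with the first part of Proposition~\ref{prop:unique-deformation}, after repackaging the deformation as a $\cC_\infty$-complex and checking that only the graded pieces in the stated grading matter.

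First, fix a bounded free resolution $F_*$ of $\tilde{\cHFL}(L)$, for instance a minimal one. This resolution can be chosen homogeneous with respect to $\gr_{\ws}$ and the Alexander multi-grading. By Corollary~\ref{cor:deformation-link-Floer}, $\cCFL(L)\simeq (F_*,\delta^1+\a^1)$, where $\a^1$ lowers the resolution grading by at least $2$. We decompose $\a^1=\sum_{k\ge 2}\b_k$ according to the drop $k$ in resolution grading and set $\d_1=\delta^1$, $\d_k=\b_k$. The relation $(\delta^1+\a^1)^2=0$, split by resolution-grading drop, is exactly the $\cC_\infty$ relation $\sum_{i+j=N}\d_i\d_j=0$. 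So $(F_*,\d_1,\d_2,\dots)$ is a $\cC_\infty$-deformation in the sense used in Proposition~\ref{prop:unique-deformation}.

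Second, we record the gradings of the pieces. Since $\cCFL(L)$ has a differential of $(\gr_{\ws},\gr_{\zs})$-degree $(-1,-1)$ and the homotopy equivalence is graded, each $\d_k$ has $\gr_{\ws}$-degree $-1$ and Alexander multi-degree $(0,\dots,0)$. The $\gr_{\zs}$-degree is then forced by $A_1+\cdots+A_n=\tfrac12(\gr_{\ws}-\gr_{\zs})$. Each $\d_k$ is an $\cR_n$-linear map lowering resolution degree by $k$, so its class lies in $\Hom^{-k}$. The obstructions $\fro$ and the differences $\d_{k}+\d_k'$ arising in the inductive argument are built from compositions of $\d_i$'s and from maps obtained from them through Lemma~\ref{lem:push-deformation}. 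A composition of two maps in the fixed grading has $\gr_{\ws}$-degree $-2$, and the inductive step of Proposition~\ref{prop:unique-deformation} also needs a primitive $\eta$ of one lower degree. I would therefore check carefully which bigrading the relevant cohomology classes live in: the comparison classes $\d_{j+1}+\d'_{j+1}$ lie in $\gr_{\ws}$-degree $-1$ and Alexander degree $0$, i.e.\ in $\Ext^{j+1}_{(-1,0,\dots,0)}$ with $j+1\ge 2$. The hypothesis kills exactly these groups, which is what the comparison step needs. The obstruction classes themselves never need to be killed, since both complexes being compared already exist.

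Third, we run the induction from the proof of Proposition~\ref{prop:unique-deformation}(1), restricted to the graded subcomplex of $\Hom_{\cR_n}(F_*,F_*)$ in $\gr_{\ws}$-degree $-1$ and Alexander degree $0$. Its homology in resolution degree $-i$ is $\Ext^i_{(-1,0,\dots,0)}$, by the Remark identifying $H_*\Hom^{-i}(F_*,F_*)$ with $\Ext^i$, which is compatible with the internal gradings. At stage $j$, Lemma~\ref{lem:push-deformation} transports the deformation so that the truncation agrees with $(F_*,\d_1,0,\dots,0)$. The new term $\d_{j+1}$ is then a cycle in the graded piece, hence null-homologous by hypothesis. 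Lemma~\ref{lem:basic_deformation_lemma}(\ref{itm:equivalent-complexes}) lets us replace it by $0$. Because $F_*$ is bounded, finitely many steps suffice, giving $(F_*,\delta^1+\a^1)\simeq(F_*,\delta^1)$, which is the free resolution.

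The main obstacle is the grading bookkeeping in the second step: verifying that every map produced in the induction stays in the single internal grading $(\gr_{\ws},A)=(-1,0)$, so that the hypothesis on only that graded piece of $\Ext$ suffices. This includes checking that the transported $f_*(\d_{j+1})$ of Lemma~\ref{lem:push-deformation} is homogeneous of this degree when $f$ is degree-preserving.
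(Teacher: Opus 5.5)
Your proposal is correct and follows the paper's proof: use Corollary~\ref{cor:deformation-link-Floer} to write $\cCFL(L)$ as a deformation of a free resolution by a term with the same Maslov--Alexander degree as $\delta^1$, then run the induction of Proposition~\ref{prop:unique-deformation}(1). Your choice to run that induction in the graded piece of the Hom complex with $\gr_{\ws}$-degree $-1$ and Alexander multi-degree $(0,\dots,0)$ is a worthwhile refinement. The Proposition as stated assumes vanishing of the larger group $\Ext_{(-1,-1)}$, so the paper uses this multi-graded version implicitly when it cites the Proposition.
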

 \begin{proof} Write ${}^{\cR_n} \tilde{\cCFL}(L)$ for a free resolution of the $\cR_n$-module $\cHFL(L)$, and write ${}^{\cR_n} \cCFL(L)$ for the actual link Floer complex. Lemma~\ref{lem:deformation} implies that there is an endomorphism $\a^1$ of ${}^{\cR_n} \tilde{\cCFL}(L)$ which decreases the resolution grading by at least 2, and such that ${}^{\cR_n} \cCFL(L)\simeq (\tilde{\cCFL}(L), \delta^1+\a^1)$. We note that the morphism $\a^1$ has $\gr_{\ws}$-grading $-1$ and Alexander grading $(0,\dots, 0)$ by construction (since it has the same Maslov-Alexander grading as $\delta^1$). We can view $(\tilde{\cCFL}(L), \delta^1+\a^1)$ as a $\cC_\infty$-structure over the chain complex $(\tilde{\cCFL}(L),\delta^1)$. The first part of Proposition~\ref{prop:unique-deformation} implies that $(\tilde{\cCFL}(L), \delta^1+\a^1)$ is equivalent as a $\cC_\infty$-complex to $(\tilde{\cCFL}(L),\delta^1)$, and hence in particular ${}^{\cR_n} \cCFL(L)$ and ${}^{\cR_n} \tilde{\cCFL}(L)$ are homotopy equivalent as type-$D$ modules. 
 \end{proof}

\subsection{L-space links with one or two components}
\label{sec:L-space-one-two-components}
In this section, we sketch how Theorem~\ref{thm:Vanishing-Ext-Groups-Critereon} recovers the known formality results for one- and two-component L-space links, as in \cite{OSLens} and \cite{CZZ}. 

\begin{prop} If $L\subset S^3$ is a one- or two-component L-space link, then $\cCFL(L)$ is formal.
\end{prop}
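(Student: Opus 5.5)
Using Theorem~\ref{thm:Vanishing-Ext-Groups-Critereon}, it suffices to show that $\Ext^i_{\cR_n}(\cHFL(L),\cHFL(L))$ vanishes in the relevant grading (gr$_{\ws}=-1$, Alexander grading $0$) for all $i>1$. A cleaner sufficient condition is that $\cHFL(L)$ has projective dimension at most $1$ over a suitable subring, after which a grading argument kills $\Ext^i$ for $i\ge 2$. I will not compute $\Ext$ over all of $\cR_n$ directly. Instead I will exploit the structure of $\cHFL$ of an L-space link: it is torsion-free over $\bF[U]$ and, in each Alexander grading, it is a copy of $\bF[U]$ whose generator sits in a degree determined by the $H$-function. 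Here $U$ acts by any $W_iZ_i$.

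\textbf{Knot case ($n=1$).} Here $\cR=\bF[W,Z]$ has global dimension $2$, so $\Ext^i=0$ for $i>2$ and only $\Ext^2$ needs checking. I will use that $\cHFL(K)$ is a submodule of a torsion-free module: for an L-space knot, $\cHFL(K)$ is isomorphic to an ideal of $\bF[W,Z]$ generated by monomials, namely the staircase ideal read off from $H$. A torsion-free, finitely generated graded module over a two-dimensional polynomial ring has projective dimension $\le 1$, since its second syzygy is reflexive and hence free. So $\Ext^2=0$ and the theorem applies.

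\textbf{Two-component case.} The ring $\cR_2$ has global dimension $4$, so I must kill $\Ext^2,\Ext^3,\Ext^4$ in bidegree $(-1,-1)$.
\begin{enumerate}
\item Describe $\cHFL(L)$ via the $H$-function (Gorsky--N\'emethi) as the module generated in each lattice point $s\in\Z^2$ by a tower. Equivalently, it is the sum over $s$ of $\cR_2$-submodules of the localization in which $W_i$ and $Z_i$ multiply to $U$.
\item Compute $\Ext$ using a resolution adapted to this lattice structure, for instance the resolution by ``rectangles'' in the style of the lattice complexes of \cite{BLZLattice}, rather than the Koszul complex.
\item Use the Maslov grading constraint to rule out contributions. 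A class in $\Ext^i$ of degree $-1$ corresponds to a map lowering resolution degree by $i$, which must shift $(\gr_{\ws},\gr_{\zs})$ by $(-1,-1)$ while preserving the Alexander grading. For an L-space link, generators in a fixed Alexander grading have Maslov grading controlled by $H$. The key inequality I aim for is that $i$-th syzygies sit in Maslov degrees at least $i-2$ below the generators in the matching Alexander grading, so that for $i\ge 2$ the required $-1$ shift is impossible except at $i=2$.
\item Handle $i=2$ separately, using that the $H$-function of a two-component L-space link satisfies $H(s_1,s_2)=$ a combination of the one-variable $H$-functions plus a term governed by the Alexander polynomial with $\pm1$ jumps.
\end{enumerate}

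The main obstacle is step 4, the $\Ext^2$ vanishing. The Borromean rings show that the analogous statement genuinely fails for three components, where there is a nonzero class in $\Ext^3$. For two components, my first attempt will be a reduction to the one-variable situation: choose a component $K_1$ and view $\cHFL(L)$ over $\bF[W_2,Z_2]$. The key property to exploit is that it is torsion-free in a suitable sense, so its projective dimension over this subring is at most $1$. I will then use a change-of-rings spectral sequence to bound the degree $(-1,-1)$ part of $\Ext^{\ge2}_{\cR_2}$ by $\Ext^{\le1}$ terms over $\bF[W_1,Z_1]$, shifted by gradings that cannot equal $(-1,-1)$.
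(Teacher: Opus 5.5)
\textbf{Knot case.} Your knot case is fine and is essentially the paper's argument: the free resolution of $\cHFL(K)$ is the staircase, which has length one. One small fix to your justification: the reflexive-hence-free argument should be applied to the \emph{first} syzygy. Since $\cHFL(K)$ embeds in a free module $F$, its first syzygy is the second syzygy of $F/\cHFL(K)$, hence reflexive, hence free. Equivalently, positive depth plus Auslander--Buchsbaum gives projective dimension at most $1$. As you wrote it, reflexivity of the second syzygy only gives projective dimension at most $2$, which is automatic.

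\textbf{Two-component case: the difficulty is misplaced.} This case has a genuine gap. $\Ext^2$, and indeed every even $\Ext^i$, in $\gr_{\ws}$-degree $-1$ vanishes for free. The reason is that $\cHFL(L)$ of an L-space link is supported in even $\gr_{\ws}$-grading, and every variable has even $\gr_{\ws}$-degree. So in a minimal resolution the generators of $F_i$ have $\gr_{\ws}\equiv i \pmod 2$, and any $\cR_2$-linear map lowering resolution grading by $2$ has even $\gr_{\ws}$-degree. Moreover, $U$ acts injectively, so the depth is positive and the projective dimension is at most $3$. The entire content of the proposition is therefore $\Ext^3_{(-1,-1)}=0$.

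Your step 3 disposes of exactly this with an unproven ``key inequality,'' and that cannot be a pure grading argument. In the paper's resolution $\Lambda\boxtimes\cX$, cochains of exactly this type exist: maps $\ws\otimes\cX\to\zs\otimes\cX$ lowering staircase grading by one. The proof has to show that the cocycles among them are null-homotopic, not that they are absent. Moreover, steps 1--3 use nothing specific to $n=2$. For the Borromean rings the analogous degree-$(-1,-1)$ class in $\Ext^3$ is nonzero, so your grading heuristic, as stated, would prove something false.

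\textbf{What is missing.} Your change-of-rings idea actually has the right shape. Consider the spectral sequence
\[
E_2^{p,q}=HH^p\bigl(\bF[W_1,Z_1];\Ext^q_{\bF[W_2,Z_2]}(\cHFL(L),\cHFL(L))\bigr).
\]
Here $p\le 2$ comes from the Koszul resolution, and $q\le 1$ because each $A_1$-slice is a staircase module over $\bF[W_2,Z_2]$. But the surviving term $E_2^{2,1}$ is precisely where $\Ext^3$ lives, and nothing about gradings kills it. The paper supplies three ingredients.
\begin{enumerate}
\item[(a)] The model of \cite{CZZ}, in which the type-$D$ structure over one variable pair is $\bigoplus_s \cC_s$ with each $\cC_s$ a staircase. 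The other variables act through $\delta_2^1$, which preserves the staircase grading.
\item[(b)] The homotopy $a\otimes f_a\simeq 1\otimes \delta_2^1(a,-)\circ f_a$, which is the Hochschild relation in your language. It reduces any such cocycle to one of the form $1\otimes f_1$, with $f_1\colon\cC_s\to\cC_s$ of bidegree $(-1,-1)$ lowering staircase grading by one.
\item[(c)] \cite{CZZ}*{Lemma~5.19}: every such $f_1$ is null-homotopic. Concretely, gradings force $f_1$ to send each odd generator only to its two neighbours, by the same monomials as $\d$, and a diagonal homotopy absorbs this.
\end{enumerate}
Without (b) and (c), the vanishing of $E_2^{2,1}$ in bidegree $(-1,-1)$ is only asserted, so the two-component case is not proved.
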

\begin{proof} We begin with the case when $L$ has one component. Write $K$ for $L$ in this case. Write $A_s(K)\subset \cCFL(K)$ for the subspace in Alexander grading $s$. Since $K$ is an L-space knot,  the large surgery formula (see \cite{OSKnots}*{Theorem~4.1}) implies that $H_*(A_s(K))\iso \bF[U]$, where $U=WZ$. In particular, $\cHFL(L)$ is rank 0 or 1 in each $(\gr_{\ws},\gr_{\zs})$-bigrading. The free resolution of this complex is easily seen to be a staircase complex $\cS$. Since $\cS$ is concentrated in just two resolution gradings, we observe that $\Hom^i_{\cR}(\cS,\cS)$ vanishes if $i>1$, where $i$ denotes the algebraic grading shift of a morphism from $\cS$ to $\cS$. Hence the hypotheses of Theorem~\ref{thm:Vanishing-Ext-Groups-Critereon} are trivially satisfied. 

We now consider the case that $L\subset S^3$ is a two-component L-space link. In this case, we recall the candidate complex constructed in \cite{CZZ}*{Section~5.2}. The candidate complex was a DA-bimodule ${}_{\cR} \cX^{\cR}$, where $\cR=\bF[W,Z]$. The underlying type-$D$ structure (i.e. ignoring the type-$A$ actions) decomposed as 
\[
\cX^{\cR}=\bigoplus_{s\in \Z+\lk(L_1,L_2)/2} \cC_s^{\cR},
\]
 where $L_1,L_2$ are the components of $L$ and each $\cC_s$ is a staircase complex. There are only non-trivial actions $\delta_{i}^1$ for $i\in \{1,2,3\}$. Furthermore, $\delta_{i}^1$ respect the staircase grading in the sense that $\delta_1^1$ lowers the staircase grading, $\delta_2^1(a,-)$ preserves the staircase grading, and $\delta_3^1(a,b,-)$ increases the staircase grading by 1 (for all $a,b\in \cR$).

 Define  ${}^{\cR} \Lambda^{\cR}$ to be the DD-bimodule
\[
\begin{tikzcd}[labels=description, column sep=1.2cm, row sep=1.2cm]
& 
\ws
	\ar[dl, "W\otimes 1+1\otimes W"]
	\ar[dr, "Z\otimes 1+1\otimes Z"]\\
\xs
	\ar[dr, "Z\otimes 1+1\otimes Z"]
&& \ys 
	\ar[dl, "W\otimes 1+1\otimes W"]
\\
&\zs
\end{tikzcd}
\] The candidate complex ${}_{\cR} \cX^{\cR}$ has the property  that tensoring on the left with ${}^{\cR} \Lambda^{\cR}$ yields a free-resolution of the homology $\cHFL(L)$. (Here we view the type-DD bimodule ${}^{\cR}\Lambda^{\cR}\boxtimes {}_{\cR} \cX^{\cR}$ as a type-D module over $\cR\otimes \cR=\cR_2$.) See the proof of \cite{CZZ}*{Proposition~5.13}.   The resolution grading on ${}^{\cR}\Lambda^{\cR}\boxtimes {}^{\cR}\cX^{\cR}$ is the sum of the cube grading on $\Lambda$ and the staircase grading on $\cX$. Let us write ${}^{\cR} F_*^{\cR}$  for the tensor product ${}^{\cR}\Lambda^{\cR}\boxtimes {}^{\cR}\cX^{\cR}$.

We observe that ${}^{\cR} F_*^{\cR}$ is supported in resolution gradings $0,1,2,3$. We first consider a cycle $f\in \Hom^2_{(-1,0,\dots, 0),\cR_2}(F_*,F_*)$. We can write
\[
F_*=(\begin{tikzcd} F_3 \ar[r] & F_2 \ar[r]& F_1 \ar[r]& F_0  \end{tikzcd}).
\]
Since the subspaces of the staircases $\cC_s$ in resolution grading $i$ (modulo 2) lie in $\gr_{\ws}$ grading $i$ (modulo 2), and similarly for $\Lambda$, we observe that $F_i$ lies in $\gr_{\ws}$-grading $i$ (modulo 2). In particular, there are no $-1$ $\gr_{\ws}$-graded maps from $F_*$ to itself which drop the resolution grading by $2$. 

Finally, we consider a cycle $f\in \Hom^3_{(-1,0,\dots, 0),\cR_2}(F_*,F_*)$. Since there are only 3 resolution gradings supported by $F_*$, the only possibility is that $f$ consists of a map which sends $\ws\otimes \cX$ to $\zs\otimes \cX,$  and which drops the staircase grading of the $\cC_s$ complexes by 1. The map $f$ can be decomposed as a (potentially infinite) sum $f=\sum_{a} a\otimes f_a$, where the sum is over monomials $a\in \cR$ and $f_a$ is a map from $\cC_s$ to $\cC_{s-A(a)}$ which shifts the $\gr_{\ws}$ grading by $-\gr_{\ws}(a)-1$. The sum is potentially infinite, but on each $\cC_s$, only finitely many  $f_a$ are non-zero. 

 We observe that on $\Lambda\boxtimes \cX$, the map $a\otimes f_a$ is chain homotopic to the map $1\otimes \delta_2^1(a,-)\circ f_a$, and furthermore, since only finitely many of the $f_a$ are non-trivial on a given $\cC_s$, we can sum all of the chain homotopies to see that the sum $f=\sum_{a} a\otimes f_a$ is chain homotopic to $\sum_{a} 1\otimes \delta_2^1(a,-)\circ f_a$. In particular, we may assume, without loss of generality, that $f=1\otimes f_1$, where $f_1$ sends each $\ws\otimes \cC_s$ to  $\zs\otimes \cC_s$ and has $(\gr_{\ws},\gr_{\zs})$ bigrading $(-1,-1)$. By \cite{CZZ}*{Lemma~5.19}, any such map is null-homotopic as a map from $\cC_s$ to $\cC_s$. Such a null-homotopy gives a null-homotopy of $f$ when viewed as a map from $\Lambda\boxtimes \cX$ to $\Lambda\boxtimes \cX$, completing the proof. \end{proof}

\section{Borromean rings complex} \label{sec:borromean}

We consider the $H$ function of the Borromean rings. This is well known to be 
\[
H_{\cB}(\ve{s})=H_{U}(\ve{s}) +\delta_{\ve{s},\vec{0}},
\]
where $H_U$ is the $H$-function of the three component unlink and $\delta_{\ve{s},\ve{0}}$ is the Kronecker delta function. See \cite{GLLM_Borromean}*{Example~3.14}.

We will visualize $H_{\cB}$ as a sequence of 2-dimensional slices, shown below:
\[
\begin{array}{|c|ccccc||ccccc||ccccc|}
\hline
\hbox{\diagbox{$s_2$}{$s_1$}}
&-2&-1&0&1&2&-2&-1&0&1&2&-2&-1&0&1&2\\
\hline	\vdots
&3&2&1&1&1
&2&1&0&0&0
&2&1&0&0&0
\\
1&
3&2 &1&1&1 & 
2&1 &\boxed{0}&0&0 &
2&1 &0&0&0 
\\
	0
&3&2 &\boxed{1}&1&1
&2&\boxed{1} &1&\boxed{0}&0
&2&1 &\boxed{0}&0&0
\\
	-1
&4&3 &2&2&2
&3&2 &\boxed{1}&1&1
&3&2 &1&1&1
\\
	\vdots
&5&4 &3&3&3
&4&3 &2&2&2
&4&3 &2&2&2
\\
\hline
s_3&&&-1&&&&&0&&&&&1&&\\
\hline
\end{array}
\]

The corresponding module has six generators, which we denote by $x_0,x_1,x_2,x_3,x_4,x_5$. These are shown in the diagram below:
\[
\begin{array}{|c|ccccc||ccccc||ccccc|}
\hline
\hbox{\diagbox{$s_2$}{$s_1$}}
&-2&-1&0&1&2&-2&-1&0&1&2&-2&-1&0&1&2\\
\hline	\vdots
&3&2&1&1&1
&2&1&0&0&0
&2&1&0&0&0
\\
1&
3&2 &1&1&1 & 
2&1 &\boxed{x_2}&0&0 &
2&1 &0&0&0 
\\
	0
&3&2 &\boxed{x_5}&1&1
&2&\boxed{x_1} &1&\boxed{x_0}&0
&2&1 &\boxed{x_4}&0&0
\\
	-1
&4&3 &2&2&2
&3&2 &\boxed{x_3}&1&1
&3&2 &1&1&1
\\
	\vdots
&5&4 &3&3&3
&4&3 &2&2&2
&4&3 &2&2&2
\\
\hline
\end{array}
\]

\begin{lem} Let $x_1,\dots, x_6$ be the elements of $\cHFL(\cB)$ described above.
\begin{enumerate}
\item The elements $x_1,\dots, x_6$ form an $\cR_3$-module basis of the homology module $\cHFL(\cB)$.
\item The space of relations is generated over $\cR_3$ by the following elements:
\begin{enumerate}
\item 
$
U_i x_j=U_k x_j
$
for all $i$, $j$ and $k$.
\item $Z_3 x_5=W_3 x_4=Z_2  x_3=W_2 x_2=Z_1 x_1=W_1 x_0$.
\item $Z_3 x_0=Z_1 x_4$. 
\item $W_3 x_0=Z_1 x_5$.
\item $Z_2 x_0=Z_1 x_2$.
\item $W_2 x_0=Z_1 x_3$.
\item $W_2 x_1=W_1 x_3$.
\item $Z_3 x_1=W_1 x_4$.
\item $Z_2 x_1=W_1 x_2$.
\item $W_3 x_1=W_1 x_5$.
\item $Z_3 x_3=W_2 x_4$.
\item $W_3 x_3=W_2 x_5$.
\item $W_3 x_2=Z_2 x_5$.
\item $Z_3 x_2=Z_2 x_4$. 
\end{enumerate}
\end{enumerate}
\end{lem}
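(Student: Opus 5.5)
We will use the standard description of the homology of an L-space link in terms of its $H$-function. For an L-space link $L\subset S^3$, the large surgery formula identifies, for each Alexander multi-grading $\ve{s}$, the subcomplex $A_{\ve{s}}(L)\subset\cCFL(L)$ with a complex whose homology is $\bF[U]$. The generator lies in $\gr_{\ws}$-grading $-2H_L(\ve{s})$. Moreover, each variable $W_i\colon A_{\ve{s}}\to A_{\ve{s}-e_i}$ and $Z_i\colon A_{\ve{s}}\to A_{\ve{s}+e_i}$ induces an injective map on homology, namely multiplication by a power of $U$ determined by the difference of $H$-values. Concretely, $\cHFL(L)$ is the $\cR_n$-module
\[
\bigoplus_{\ve{s}} \bF[U]\cdot g_{\ve{s}},
\]
with the actions
\[
W_i g_{\ve{s}}=U^{H(\ve{s}-e_i)-H(\ve{s})} g_{\ve{s}-e_i},\qquad Z_i g_{\ve{s}}=U^{H(\ve{s}+e_i)-H(\ve{s})+1}g_{\ve{s}+e_i}.
\]
Here the exponents are nonnegative since $H(\ve{s})-1\le H(\ve{s}+e_i)\le H(\ve{s})$. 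In particular, every $U_i=W_iZ_i$ acts by the same operator $U$ on each tower, which gives relation (a). The plan is to work entirely inside this explicit module, using $H_\cB=H_U+\delta_{\ve{s},\ve{0}}$ from the table.

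For part (1) we need the minimal generators, which we can read off from the $H$-function. The tower generator $g_{\ve{s}}$ fails to be a multiple of a neighbour exactly when there is no $i$ with $H(\ve{s}+e_i)=H(\ve{s})$, so that $g_{\ve{s}}=W_i g_{\ve{s}+e_i}$, and no $i$ with $H(\ve{s}-e_i)=H(\ve{s})-1$, so that $g_{\ve{s}}=Z_i g_{\ve{s}-e_i}$. For the unlink, $H_U(\ve{s})=\sum_i h(s_i)$ with $h(s)=\max(0,-s)$, and the only such point is $\ve{0}$. For $H_\cB$, the bump at $\ve{0}$ destroys minimality there. It creates exactly the six points $\pm e_i$ as minimal generators: at $e_1$, for instance, one has $H=0$ while $H(\ve{0})=1$ and $H(2e_1)=0$ differ appropriately. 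In addition, $g_{\ve{0}}$ is a combination $U^{-1}$-divisible by none of them, but it equals $W_1 g_{e_1}$, and so on. These six points are the boxed $x_0,\dots,x_5$. We then check generation. Every other $\ve{s}$ is reached from one of these six by a monotone path along which each step is an equality move, using that $H_\cB=H_U$ away from $\ve{0}$. A tower in the unlink part is generated by any single point from which one can walk to $\ve{s}$ by $W_i$'s where $s_i>0$ and $Z_i$'s where $s_i<0$. Since every $\ve{s}\ne \ve{0}$ has some nonzero coordinate, some $\pm e_i$ serves as the starting point. Finally, the lowest power is reached, because the unlink values are attained.

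For part (2), the relations listed are easily verified by direct computation with the formulas above. For example, $W_1x_0$ and $Z_1 x_1$ both land in $g_{\ve{0}}$ with exponent $0$. The chain in (b) expresses that all six arrows into $\ve{0}$ hit $g_{\ve{0}}$. Each of (c)--(n) compares two length-two paths from $\pm e_i$ and $\pm e_j$ ($i\ne j$) to $\pm e_i\pm e_j$, with matching $U$-powers computed from $H_U$. The substantive direction is that these relations generate. We will compare Hilbert series, multigraded by Alexander grading and $\gr_{\ws}$. Let $N$ be the quotient of the free module $\cR_3^6$ by the listed relations. There is a surjection $N\to\cHFL(\cB)$, so it suffices to show that in each Alexander grading $\ve{s}$, $N_{\ve{s}}$ is spanned over $\bF[U]$ by a single element. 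Using (a), any monomial applied to $x_j$ can be rewritten in normal form as $U^k$ times a squarefree-in-each-index monomial in a single direction per coordinate. Relations (b)--(n) let us move between the generators $x_j$ whenever two normal-form words land in the same $\ve{s}$. Since the monomials reaching $\ve{s}$ from the various $\pm e_i$ differ only in the first step, and (b)--(n) identify exactly those first steps, all such words are identified up to a power of $U$. The powers agree because the relations are homogeneous.

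The main obstacle is this last bookkeeping step: showing that the quadratic relations (c)--(n) suffice, rather than needing further relations through $\ve{0}$ or through $\pm e_i\pm e_j\pm e_k$. I would handle it by a case analysis on the sign pattern of $\ve{s}$ and the number of its nonzero coordinates. Alternatively, I would confirm it with the Macaulay2 computation already used in the introduction.
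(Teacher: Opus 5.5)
Your overall strategy is sound and differs from the paper's route: build the tower model of $\cHFL(\cB)$ from $H_{\cB}$, read off the minimal generators locally, and then show the quotient $N$ by the listed relations is cyclic over $\bF[U]$ in each Alexander grading. As written, however, it contains a concrete error, and the decisive step is left undone.

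\textbf{The action formulas are wrong.} Take your own normalization $\gr_{\ws}(g_{\ve s})=-2H(\ve s)$. Grading bookkeeping then forces
$W_i g_{\ve s}=U^{H(\ve s)+1-H(\ve s-e_i)}g_{\ve s-e_i}$ and $Z_i g_{\ve s}=U^{H(\ve s)-H(\ve s+e_i)}g_{\ve s+e_i}$.
Your exponents are the complements ($e\mapsto 1-e$) of these. For the unknot, where $g_0=1$ and $g_{-1}=W$ in $\bF[W,Z]$, your formula gives $Wg_0=Ug_{-1}$ instead of $Wg_0=g_{-1}$.

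Everything derived from the formula inherits the error:
\begin{itemize}
\item The clause $H(\ve s-e_i)=H(\ve s)-1$ in your minimality criterion can never hold, since $H$ is nonincreasing.
\item The other clause, applied to $H_U$, selects every $\ve s$ with all $s_i\le -1$, not $\ve 0$.
\item Your walking rule ($W_i$ where $s_i>0$, $Z_i$ where $s_i<0$) is backwards.
\end{itemize}
The conclusions you assert (minimal generators exactly at $\pm e_i$, and $W_1x_0=Z_1x_1=g_{\ve 0}$) are correct, but only because your examples silently used the right rule. So part (1) is not actually derived from what you wrote.

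The fix is as follows. $g_{\ve s}$ is a minimal generator iff $H(\ve s+e_i)=H(\ve s)$ and $H(\ve s-e_i)=H(\ve s)+1$ for all $i$. The exponent-zero moves are the outward ones: $Z_i$ from $t_i\ge 0$, and $W_i$ from $t_i\le 0$. Since $H_{\cB}=H_U$ away from $\ve 0$, for $\ve s\notin\{\ve 0,\pm e_i\}$ the unlink witness of non-minimality survives. For $\ve s\neq\ve 0$, an outward path from $e_i$ (if $s_i>0$) or from $-e_i$ (if $s_i<0$) never meets $\ve 0$, which gives generation.

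\textbf{The generation step is short and should be done.} By (a), $N_{\ve s}$ is spanned over $\bF[U]$ by six words $m_jx_j$. Here $m_j$ is the unique monomial of Alexander grading $\ve s-A(x_j)$ that uses, for each $k$, only $W_k$'s or only $Z_k$'s.
\begin{itemize}
\item For $\ve s=\ve 0$, the six words coincide by (b).
\item Otherwise pick $i$ with $s_i\neq 0$, say $s_i>0$ (the case $s_i<0$ is symmetric), and set $x_{j_0}=g_{e_i}$.
\item For $x_j=g_{-e_i}$: peel one $Z_i$ off $m_j$ and use $Z_ix_j=W_ix_{j_0}$ from (b). This gives $m_jx_j=Um_{j_0}x_{j_0}$.
\item For $x_j=g_{\epsilon e_k}$ with $k\neq i$ and $\epsilon s_k\ge 1$: peel one $Z_i$ and apply the relation among (c)--(n) at $e_i+\epsilon e_k$. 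This gives $m_jx_j=m_{j_0}x_{j_0}$.
\item For $x_j=g_{\epsilon e_k}$ with $k\neq i$ and $\epsilon s_k\le 0$: peel the extra $W_k$ (if $\epsilon=1$) or $Z_k$ (if $\epsilon=-1$) and apply (b). This gives $m_jx_j=Um_{j_0}x_{j_0}$.
\end{itemize}
Hence a single word divides all the others; that, not homogeneity of the relations, is the point. So $N_{\ve s}$ is cyclic, and a cyclic $\bF[U]$-module surjecting onto $\bF[U]$ maps isomorphically. In particular, no relations through $\ve 0$ or through $\pm e_i\pm e_j\pm e_k$ are needed.

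\textbf{Comparison with the paper.} The paper does not build the tower model by hand. It quotes \cite{BLZLattice}*{Lemma~7.1}, which supplies the minimal generators and says the relations are spanned by $(U_i+U_j)x_k$ together with binomials $\alpha x_i+\beta x_j$, where $\alpha,\beta$ are coprime with $w_U(\alpha)=w_U(\beta)=0$. An Alexander-grading sandwich then confines each binomial to the box between $A(x_i)$ and $A(x_j)$, and symmetry leaves finitely many pairs to check.

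Once corrected, your route is self-contained given the tower model and makes the sufficiency of (a)--(n) fully explicit. The paper's route is shorter, but it rests on the general BLZ structure lemma and leaves most cases to the reader.
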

\begin{proof}
The generators and relations of $\cHFL(L)$ of an L-space link $L$ are described in \cite{BLZLattice}*{Lemma~7.1}.
Part one of the aforementioned lemma describes the unique minimal set of generators, and it is straightforward to identify this with $\{x_1,\dots, x_6\}$. The second part of the aforementioned lemma states that the generating set is spanned by the elements $(U_i+U_j) x_k$ as well as elements of the form $\a x_i+\b x_j$ where $\a$ and $\b$ are monomials such that $\a x_i$ and $\b x_j$ have the same Maslov and Alexander gradings. Furthermore, $\a$ and $\b$ may taken to be relatively prime, and furthermore \cite{BLZLattice}*{Lemma~7.1(2)} implies that we may take $w_U(\a)=w_U(\b)=0$, where $w_U(\a)$ is equal to $\sum_{i=1}^n \min(a_i,b_i)$ where $\a=\prod_{i=1}^n W_i^{a_i} Z_i^{b_i}$. Note that that by canceling common factors of $\a$ and $\b$, we may further assume that
\begin{equation}
\min (A_k(x_i), A_k(x_j))\le A_k(\a x_i)=A_k(\b x_j)\le \max (A_k(x_i), A_k(x_j))
\label{eq:inequality-Alexander-gradings}
\end{equation}
for all $k$.

We now show that the space of relations is spanned by the relations in the statement. By symmetry of $\cHFL(L)$, it suffices to show the claim for relations of the form $\a x_0+\b x_1$ and $\a x_2+\b x_0$.  For $x_0$ and $x_1$, Equation~\eqref{eq:inequality-Alexander-gradings} implies that the only possibilities for $A(\a x_0)=A(\b x_1)$ are of the form $(s_1,0,0)$ where $s_1\in \{-1,0,1\}$. The case that $s_1=-1$ may be excluded by assuming $w_U(\a)=0$, so our relation would be of the form $x_1= \b x_0$ which is not the case. Similarly we may exclude the case $s_1=1$. Therefore the only relation we need to add is of the form $Z_1 x_1=W_1 x_0$, which is in our list. The argument for relations of the form $\a x_2+\b x_0$ are similarly reduced to $Z_1 x_2=W_2 x_0$ and $W_2 x_2=W_1 x_0$, which are both relations in our list. The rest of the cases are handled similarly. 
\end{proof}

\subsection{The free-resolution complex}
\label{sec:free-resolution}

We used Macaulay2 \cite{Mac2} to compute a free-resolution of the module ${}_{\cR_3} \cHFL(\cB)$. This is produced using the following code.

\begin{verbatim}
R=ZZ/2[W_1,Z_1,W_2,Z_2,W_3,Z_3,
 	Degrees=>{{-2,0},{0,-2},{-2,0},{0,-2},{-2,0},{0,-2}}]
M_matrix=transpose matrix{
{W_1,Z_1,0,0,0,0},
{W_1,0,W_2,0,0,0},
{W_1,0,0,Z_2,0,0},
{W_1,0,0,0,W_3,0},
{W_1,0,0,0,0,Z_3},
{Z_3,0,0,0,Z_1,0},
{W_3,0,0,0,0,Z_1},
{Z_2,0,Z_1,0,0,0},
{W_2,0,0,Z_1,0,0},
{0,W_2,0,W_1,0,0},
{0,Z_3,0,0,W_1,0},
{0,Z_2,W_1,0,0,0},
{0,W_3,0,0,0,W_1},
{0,0,0,Z_3,W_2,0},
{0,0,0,W_3,0,W_2},
{0,0,W_3,0,0,Z_2},
{0,0,Z_3,0,Z_2,0},
{W_1*Z_1+W_2*Z_2,0,0,0,0,0},
{W_1*Z_1+W_3*Z_3,0,0,0,0,0},
{0,W_1*Z_1+W_2*Z_2,0,0,0,0},
{0,W_1*Z_1+W_3*Z_3,0,0,0,0},
{0,0,W_1*Z_1+W_2*Z_2,0,0,0},
{0,0,W_1*Z_1+W_3*Z_3,0,0,0},
{0,0,0,W_1*Z_1+W_2*Z_2,0,0},
{0,0,0,W_1*Z_1+W_3*Z_3,0,0},
{0,0,0,0,W_1*Z_1+W_2*Z_2,0},
{0,0,0,0,W_1*Z_1+W_3*Z_3,0},
{0,0,0,0,0,W_1*Z_1+W_2*Z_2},
{0,0,0,0,0,W_1*Z_1+W_3*Z_3}}
M_target=R^{{0,2},{2,0},{0,2},{2,0},{0,2},{2,0}}
f=map(M_target, , M_matrix)
M=cokernel f
F=res M
\end{verbatim}

Macaulay2 computes a free-resolution of $\cHFL(\cB)$ to be a complex of the following form
\[
F_*=(\begin{tikzcd}
F_0 & F_1\ar[l, "\d_1"] & F_2 \ar[l, "\d_1"] & F_3 \ar[l, "\d_1"] & F_4 \ar[l, "\d_1"] & F_5 \ar[l, "\d_1"]
\end{tikzcd})
\]

The complex $F_*$ is free of rank 66. The ranks of $F_0$, $F_1$, $F_2$, $F_3$, $F_4$ and $F_5$ are 6, 17, 21, 15, 6 and 1, respectively. We will enumerate the generators according to the ordering given by Macaulay2, though we will label them in a way which will help the subsequent steps. We will write 
\[
x_0,\dots, x_{42}, X_0, x_{43},\dots, x_{63}, Y_0
\]
for the generators of $F_0,\dots, F_5$, respectively (in the order produced by Macaulay2). It will be convenient to do the following change of basis:
\[
x_6\rightsquigarrow x_6+x_{11},\qquad x_{16}\rightsquigarrow x_{16}+x_{19}, \quad \text{and} \quad X_0\rightsquigarrow X_0+Z_1 x_{31}.
\]
 In Figure~\ref{fig:free-res-gens}, we display the generators of the free-resolution, their Alexander grading, and their differential.

\begin{figure}[h]
\[
\scalebox{.7}{$
\begin{array}{|c|c|c|c|}
\hline
x&\d(x)&A(x)&F_i\\
\hline
x_{0}&0 & ( 1,0 ,0 ) &F_0\\
x_{1}& 0& ( -1,0 ,0 ) &F_0\\
x_{2}&0 & ( 0,1 ,0 ) &F_0\\
x_{3}& 0& ( 0,-1 ,0 ) &F_0\\
x_{4}&0 & (0 ,0 ,1 ) &F_0\\
x_{5}& 0& ( 0,0 ,-1 ) &F_0\\
x_{6}& W_1x_0+Z_1x_1& ( 0, 0,0 ) &F_1\\
x_{7}&Z_2 x_1+W_1 x_2 & ( -1, 1,0 ) &F_1\\
x_{8}& W_2x_1+W_1x_3 & ( -1,-1 ,0 ) &F_1\\
x_{9}&Z_3x_1+W_1x_4 & ( -1, 0, 1) &F_1\\
x_{10}&W_3 x_1+W_1x_5 & (-1,0 ,-1)& F_1\\
x_{	11}&Z_1x_1+Z_3x_5 & ( 0, 0, 0) &F_1\\
x_{12}&  Z_2 x_0+Z_1x_2&( 1,1 ,0 ) &F_1\\
x_{13}& W_2x_0 +Z_1x_3&( 1, -1, 0) &F_1\\
x_{14}& Z_3x_0+Z_1x_4 &( 1,0 ,1 ) &F_1\\
x_{15}&W_3 x_0+Z_1x_5 &( 1, 0, -1) &F_1\\
x_{16}&W_2 x_2+Z_2x_3 &( 0, 0, 0) &F_1\\
x_{17}&Z_3x_3+W_2x_4 &( 0, -1, 1) &F_1\\
x_{18}&W_3x_3+W_2x_5 &( 0,-1 ,-1 ) &F_1\\
x_{19}&Z_2x_3+Z_3x_5 &( 0, 0,0 ) &F_1\\
x_{20}& Z_3 x_2+Z_2 x_4 &( 0, 1, 1) &F_1\\
x_{21}&W_3x_2+Z_2x_5 &( 0,1 ,-1 ) &F_1\\
x_{22}& W_2x_4+Z_3x_5 &( 0, 0,0 ) &F_1\\
x_{23}&Z_2x_6+Z_1x_7+W_1x_{12} &( 0, 1,0 ) &F_2\\
x_{24}&W_2x_6+Z_1x_8 +W_1x_{13} &( 0, -1, 0) &F_2\\
x_{25}& Z_3 x_6+Z_1x_9+W_1 x_{14} &( 0, 0, 1) &F_2\\
x_{26}& W_3 x_6+Z_1x_{10}+W_1x_{15} &( 0, 0, -1) &F_2\\
x_{27}&Z_3 x_8+W_2x_9+W_1x_{17} &( -1,-1 ,1 ) &F_2\\
x_{28}& W_3x_8+W_2x_{10}+W_1x_{18} &( -1,-1 ,-1 ) &F_2\\
x_{29}& Z_3x_{13}+W_2 x_{14}+Z_1x_{17} &( 1,-1 ,1 ) &F_2\\
x_{30}& W_3 x_{13}+W_2x_{15}+Z_1x_{18} &( 1, -1, -1) &F_2\\
x_{31}& W_2 x_7+Z_2 x_8+W_1x_{16} &( -1,0 ,0 ) &F_2\\
x_{32}&  Z_3 x_7+Z_2x_9+W_1x_{20}&( -1,1 ,1 ) &F_2\\
\hline
\end{array}
\quad
\begin{array}{|c|c|c|c|}
\hline
x&\d(x)&A(x)&F_i\\
\hline
x_{33}& W_3 x_7+Z_2x_{10}+W_1x_{21}&( -1, 1, -1) &F_2\\
x_{34}& W_2 x_{12}+Z_2 x_{13}+Z_1 x_{16} &( 1,0 ,0 ) &F_2\\
x_{35}&Z_3 x_{12}+Z_2x_{14}+Z_1x_{20} &( 1, 1,1 ) &F_2\\
x_{36}&W_2 x_{12}+Z_2 x_{15}+Z_1x_{21} &(1 ,1 ,-1 ) &F_2\\
x_{37}&Z_3 x_{16}+Z_2 x_{17}+W_2x_{20} &( 0, 0, 1) &F_2\\
x_{38}&W_3 x_{16}+Z_2x_{18}+W_2 x_{21} &( 0, 0, -1) &F_2\\
x_{39}& W_3 x_9+Z_3 x_{10}+W_1 x_{22} &( -1,0 ,0 ) &F_2\\
x_{40}&  W_3 x_{14}+Z_3 x_{15}+Z_1 x_{22}&( 1, 0, 0) &F_2\\
x_{41}& W_3 x_{17}+Z_3 x_{18}+W_2 x_{22}&( 0, -1, 0) &F_2\\
x_{42}& W_3 x_{20}+Z_3 x_{21}+Z_2 x_{22} &( 0,1 ,0 ) &F_2\\
X_0&Z_1Z_2x_8+Z_1 Z_3 x_{10}+Z_2Z_3x_{18}+(U_2+U_3)x_{11}+(U_1+U_3)x_{19} &( 0, 0, 0) &F_2\\
x_{43}&Z_3 x_{24}+W_2 x_{25}+Z_1 x_{27}+W_1 x_{29} &( 0, -1,1) &F_3\\
x_{44}& W_3 x_{24}+W_2 x_{26}+Z_1 x_{28}+W_1 x_{30} &(0 , -1,-1 ) &F_3\\
x_{45}&W_2x_{23}+Z_2 x_{24}+Z_1 x_{31}+W_1 x_{34} &( 0,0 ,0 ) &F_3\\
x_{46}& Z_3 x_{23}+Z_2 x_{25}+Z_1 x_{32}+W_1 x_{35}&( 0, 1, 1) &F_3\\
x_{47}& W_3 x_{23}+Z_2 x_{26}+Z_1 x_{33}+W_1 x_{36}&( 0,1 ,-1 ) &F_3\\
x_{48}& Z_2 x_{27}+Z_3 x_{31}+W_2 x_{32}+W_1 x_{37}&( -1, 0, 1) &F_3\\
x_{49}& Z_2 x_{28}+W_3 x_{31}+W_2 x_{33}+W_1 x_{38} &( -1,0 ,-1 ) &F_3\\
x_{50}&Z_2 x_{29}+Z_3 x_{34}+W_2 x_{35}+Z_1 x_{37} &( 1,0 ,1 ) &F_3\\
x_{51}& Z_2 x_{30}+W_3 x_{34}+W_2 x_{36}+Z_1 x_{38} &( 1,0 ,-1 ) &F_3\\
x_{52}& W_3 x_{25}+Z_3 x_{26}+Z_1 x_{39}+W_1 x_{40} &( 0,0 ,0 ) &F_3\\
x_{53}& W_3 x_{27}+Z_3 x_{28}+W_2 x_{39}+W_1 x_{41}&( -1,-1 ,0 ) &F_3\\
x_{54}& W_3 x_{29}+Z_3 x_{30}+W_2 x_{40}+Z_1 x_{41} &( 1, -1, 0) &F_3\\
x_{55}&W_3 x_{32}+Z_3 x_{33}+Z_2 x_{39}+W_1 x_{42} &( -1, 1,0 ) &F_3\\
x_{56}& W_3 x_{35}+Z_3 x_{36}+Z_2 x_{40}+Z_1 x_{42} &( 1,1 ,0 ) &F_3\\
x_{57}& W_3 x_{37}+Z_3 x_{38}+Z_2 x_{41}+W_2 x_{42} &( 0, 0,0 ) &F_3\\
x_{58}& Z_2 x_{43}+Z_3 x_{45}+W_2 x_{46}+Z_1 x_{48}+W_1 x_{50}&( 0, 0, 1) &F_4\\
x_{59}& Z_2 x_{44}+W_3 x_{45}+W_2 x_{47}+Z_1 x_{49}+W_1 x_{51}&( 0,0 ,-1 ) &F_4\\
x_{60}& W_3 x_{43}+Z_3 x_{44}+W_2 x_{52}+Z_1 x_{53}+W_1 x_{54}&( 0, -1,0 ) &F_4\\
x_{61}&W_3 x_{46}+Z_3 x_{47}+Z_2 x_{52}+Z_1 x_{55}+W_1 x_{56} &( 0,1 ,0 ) &F_4\\
x_{62}& W_3 x_{48}+Z_3 x_{49}+Z_2 x_{53}+W_2 x_{55}+W_1 x_{57} &( -1,0 ,0 ) &F_4\\
x_{63}&W_3 x_{50}+Z_3 x_{51}+Z_2 x_{54}+W_2 x_{56}+ Z_1 x_{57} &( 1, 0, 0) &F_4\\
Y_0&W_3x_{58} +Z_3x_{59}+Z_2x_{60}+W_2x_{61}+Z_1x_{62}+W_1x_{63} &( 0, 0,0 ) &F_5\\
\hline
\end{array}
$}
\]
\caption{The generators and differential of the free-resolution $F_*$. }
\label{fig:free-res-gens}
\end{figure}

\subsection{$\Ext$ groups of the Borromean rings}
\label{sec:Ext-groups}

The Macaulay2 code in Section~\ref{sec:free-resolution} can also be used to compute the $\Ext$ groups of the module $\cHFL(\cB)$. The code described therein gives $\cHFL(\cB)$ as a graded bimodule over the $(\gr_{\ws},\gr_{\zs})$-gradings. Macaulay2's conventions for graded resolutions are slightly different than ours, because its conventions are that the differential preserves the bigrading, whereas our convention is that it lowers the $(\gr_{\ws},\gr_{\zs})$ bigrading by $(-1,-1)$. Macaulay2's $(\gr_{\ws},\gr_{\zs})$-bigrading can be easily adjusted to match our convention by shifting the bigrading of $F_i$ up by $(i,i)$. In particular, to compute $\Ext_{\cR_3,(-1,-1)}^i(\cHFL(\cB),\cHFL(\cB))$ in our conventions  (that is, the subspace of $\Ext^i_{\cR_3}$ lying in $(\gr_{\ws},\gr_{\zs})$-grading $(-1,-1)$) we ask Macaulay2 to compute $\Ext_{\cR_3,(i-1,i-1)}^i(\cHFL(\cB),\cHFL(\cB))$ using its conventions. This is achieved by entering the following code:

\begin{verbatim}
E2=Ext^2(M,M)
E3=Ext^3(M,M)
E4=Ext^4(M,M)
E5=Ext^5(M,M)
basis({1,1},E2)
basis({2,2},E3)
basis({3,3},E4)
basis({4,4},E5)
\end{verbatim}
Macaulay2 proves the following:
\begin{lem}
\label{lem:ext-group-computation} We have 
\[
\Ext^i_{(-1,-1),\cR_3}(\cHFL(\cB),\cHFL(\cB))\iso\begin{cases} \bF& \text{ if } i=3\\
0& \text{ if } 1<i, i\neq 3
\end{cases}
\]
\end{lem}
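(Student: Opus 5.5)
The lemma is a finite, bigraded computation, so the plan is to reduce it to linear algebra in a single bigrading for each $i$ and then carry that out, either by Macaulay2 or by hand. By the Remark following Lemma~\ref{lem:basic_deformation_lemma}, $\Ext^i_{\cR_3}(\cHFL(\cB),\cHFL(\cB))$ is the homology of $\Hom^{-i}_{\cR_3}(F_*,F_*)$, where $F_*$ is the resolution of Figure~\ref{fig:free-res-gens}. Equivalently, it is the homology of $\Hom_{\cR_3}(F_i,\cHFL(\cB))$ in resolution degree $i$. This second model is much smaller, because each $F_i$ is free over $\cR_3$ with an explicit finite basis.

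\textbf{Step 1: set up the grading.} I would first fix the bigrading conventions. Macaulay2 requires the differential to preserve bigrading, while ours lowers it by $(1,1)$. Shifting $F_i$ by $(i,i)$ therefore identifies our $(-1,-1)$-graded $\Ext^i$ with Macaulay2's graded piece in degree $(i-1,i-1)$. This justifies the code in the statement.

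\textbf{Step 2: compute each group.} For each $2\le i\le 5$, I would compute the graded piece of
\[
\Hom_{\cR_3}(F_{i-1},\cHFL(\cB))\to \Hom_{\cR_3}(F_i,\cHFL(\cB))\to \Hom_{\cR_3}(F_{i+1},\cHFL(\cB)).
\]
In a fixed bigrading, every term is a finite-dimensional $\bF$-vector space. Two facts make this true:
\begin{itemize}
\item the generators of $F_i$ are finite in number and have known gradings;
\item $\cHFL(\cB)$ has finite rank in each fixed tri-Alexander plus Maslov grading, since the $H$-function slices are finite.
\end{itemize}
Here I would also use that the Alexander grading of any such map must be $(0,0,0)$. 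This constraint shrinks the spaces considerably.

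\textbf{Step 3: the vanishing cases.} For $i=5$, $F_6=0$, so only the single generator $Y_0$ in Alexander grading $(0,0,0)$ needs to be examined. For $i=4$ and $i=2$, I expect the relevant Hom spaces to be either zero or to consist entirely of boundaries. A parity argument on $\gr_{\ws}$, like the one used in the two-component case, might dispose of the even-degree shifts quickly, but I would verify it directly.

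\textbf{Step 4: the main obstacle, $i=3$.} The hard part is showing that the $i=3$ group is exactly one-dimensional, not merely nonzero. First, I would exhibit a candidate cycle, guided by the deformation in Theorem~\ref{thm:main-computation}: the new generators $\ve{X},\ve{Y}$ correspond to $X_0\in F_2$ and $Y_0\in F_5$, so a map $F_5\to F_2$, say $Y_0\mapsto X_0$, should represent the class. Second, I would show that this class is not a boundary. The cleanest route is the nonformality argument of the introduction: if the class were trivial, Proposition~\ref{prop:unique-deformation} would make $\cCFL(\cB)$ formal, contradicting the rank count of $66$ versus $64$. The upper bound of dimension one still requires the explicit linear algebra, which in practice is delegated to the displayed Macaulay2 computation.
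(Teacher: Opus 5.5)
Your overall route is the paper's: the paper proves this lemma simply by running the displayed Macaulay2 code, with exactly the $(i,i)$ shift from your Step~1. However, one reduction in your Step~2 is wrong, and it leaves a gap if the linear algebra is done by hand.

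A map of $(\gr_{\ws},\gr_{\zs})$-bigrading $(-1,-1)$ has Alexander shift $(a_1,a_2,a_3)$ with $a_1+a_2+a_3=\tfrac12(\gr_{\ws}-\gr_{\zs})=0$. Nothing forces $(a_1,a_2,a_3)=(0,0,0)$. The paper's Macaulay2 ring is only $\Z^2$-graded, so the groups it computes, and the groups in the statement, are sums over all such Alexander shifts.

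Concretely, take $i=5$. The $(-1,-1)$ cochains in $\Hom_{\cR_3}(F_5,\cHFL(\cB))$ send $Y_0$ into the $(-2,-2)$-graded part of $\cHFL(\cB)$. By the formula for $H_{\cB}$, this part is $7$-dimensional: the top classes in Alexander gradings $(0,0,0)$ and $e_j-e_k$ for $j\neq k$. Your restriction keeps only the first. The other six must also be shown to be coboundaries. They are: for example, the cochain $x_{61}\mapsto x_0$ on $F_4$ has coboundary $Y_0\mapsto W_2x_0$, the top class in grading $(1,-1,0)$. Your plan never checks this.

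The Alexander-$(0,0,0)$ summand you would compute is exactly what Theorem~\ref{thm:Vanishing-Ext-Groups-Critereon} and the deformation argument use, since $\a^1$ preserves the tri-Alexander grading. Still, it proves only a weaker statement than the lemma as written. Dropping the restriction does not break your finiteness claim: $H_{\cB}$ is proper on each hyperplane $s_1+s_2+s_3=c$, so each bigraded piece of $\cHFL(\cB)$ is still finite-dimensional.

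Two further remarks on Steps~3 and~4.

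First, the parity argument you hedge on is decisive. Since $\cR_3$ and $\cHFL(\cB)$ are supported in even bigradings, each generator of $F_j$ has bigrading $\equiv (j,j)$ mod $2$. Hence a $(-1,-1)$-graded map $F_*\to F_{*-i}$ can be nonzero only for $i$ odd, and $\Ext^2_{(-1,-1)}=\Ext^4_{(-1,-1)}=0$ with no computation. Only $i=3$ and $i=5$ need actual work.

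Second, in Step~4, the map $Y_0\mapsto X_0$ by itself is not a cycle in $\Hom^{-3}_{\cR_3}(F_*,F_*)$, because $\d_1\circ\g_3(Y_0)=\d_1 X_0\neq 0$. The paper has to add the components $\b_3$ and $\a_3$, found by solving a linear system, to obtain a chain map. In your $\Hom(F_i,\cHFL)$ model the class lives on $F_3$, represented by $x_{44}\mapsto W_3x_3$, $x_{49}\mapsto W_1x_5$, $x_{53}\mapsto W_2x_1$.

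Your lower bound does not actually need a representative. It should be phrased as follows: once the computation gives vanishing for $i\neq 3$ and $\dim\le 1$ for $i=3$, the contrapositive of Theorem~\ref{thm:L-space-ext-groups-trivial} and the $66$ versus $64$ count force $\Ext^3_{(-1,-1)}\cong\bF$. This is valid and non-circular. It still rests on the same Macaulay2 resolution (for the $66$) and on the computed upper bound, so it is a consistency check rather than a replacement for the paper's computation.
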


\section{Deforming the free-resolution complex} \label{sec:deform}

In this section, we describe a deformation of the free-resolution complex $F_*$ computed in the last section.  Schematically, our deformation will take the following form:
\[
\begin{tikzcd}
F_0 & F_1\ar[l, "\d_1"] & F_2 \ar[l, "\d_1"] & F_3 \ar[l, "\d_1"] \ar[lll, "\a_3", bend right,swap]& F_4 \ar[l, "\d_1"] \ar[lll, "\b_3", bend right,swap]& F_5 \ar[l, "\d_1"] \ar[lll, "\g_3", bend right,swap]
\end{tikzcd}
\]
After defining $\a_3,\b_3$ and $\g_3$, we will prove that $\cCFL(\cB)$ is homotopy equivalent to $(F_*, \d_1+\d_3)$ where $\d_3:=\a_3+\b_3+\g_3$.

\subsection{The maps $\a_3$, $\b_3$ and $\g_3$}

We now define $\a_3$, $\b_3$ and $\g_3$. We define the map $\g_3$ to send $Y_0$ to $X_0$. Since $Y_0$ is the only generator of $F_5$, this completely defines $\g_3$. We now define $\b_3$ and $\a_3$ recursively so that $\d_3=\a_3+\b_3+\g_3$ satisfies the Mauer-Cartan equation
\[
\d_1 \circ \d_3+\d_3\circ \d_1+\d_3\circ \d_3=0.
\]

We solve for $\b_3$ and $\g_3$ by defining additional variables for the possible components of these maps, and then viewing the equations
\[
\d_1|_{F_2}\circ \g_3+\b_3\circ \d_1|_{F_5}=0\quad \text{and} \quad \a_3\circ \d_1|_{F_4}+\d_1|_{F_1}\circ \b_3=0
\]
as a large system of linear equations. This system turns out to be under-determined. Omitting straightforward linear algebra,  one may easily compute that the following maps are solutions
\[
\b_3=\begin{cases}x_{59}\mapsto &Z_1 x_{10}+W_3(x_{11}+x_{19})\\
x_{60}\mapsto& Z_3 x_{18}+W_2 x_{11}\\
x_{62}\mapsto & Z_2 x_8+W_1 x_{19}
\end{cases}
\qquad \text{and} \qquad \a_3=\begin{cases} x_{44}\mapsto & W_3 x_3\\
x_{49}\mapsto &W_1 x_5\\
x_{53}\mapsto &W_2 x_1.
\end{cases}
\]

We observe that the deformation $(F_*, \d_1+\d_3)$ is homotopy equivalent to the complex where we cancel $X_0$ and $Y_0$, because $X_0$ and $Y_0$ form an acyclic quotient complex. We write $\cC(\cB)$ for this cancelled complex, which  we summarize in Figure~\ref{fig:free-res-gens-canceled}. 

\begin{figure}[h]
\[
\scalebox{.7}{$
\begin{array}{|c|c|c|}
\hline
x&\d(x)&A(x)\\
\hline
x_{0}&0 & ( 1,0 ,0 ) \\
x_{1}& 0& ( -1,0 ,0 ) \\
x_{2}&0 & ( 0,1 ,0 ) \\
x_{3}& 0& ( 0,-1 ,0 ) \\
x_{4}&0 & (0 ,0 ,1 ) \\
x_{5}& 0& ( 0,0 ,-1 ) \\
x_{6}& W_1x_0+Z_1x_1& ( 0, 0,0 ) \\
x_{7}&Z_2 x_1+W_1 x_2 & ( -1, 1,0 ) \\
x_{8}& W_2x_1+W_1x_3 & ( -1,-1 ,0 ) \\
x_{9}&Z_3x_1+W_1x_4 & ( -1, 0, 1) \\
x_{10}&W_3 x_1+W_1x_5 & (-1,0 ,-1)\\
x_{	11}&Z_1x_1+Z_3x_5 & ( 0, 0, 0) \\
x_{12}&  Z_2 x_0+Z_1x_2&( 1,1 ,0 ) \\
x_{13}& W_2x_0 +Z_1x_3&( 1, -1, 0) \\
x_{14}& Z_3x_0+Z_1x_4 &( 1,0 ,1 ) \\
x_{15}&W_3 x_0+Z_1x_5 &( 1, 0, -1) \\
x_{16}&W_2 x_2+Z_2x_3 &( 0, 0, 0) \\
x_{17}&Z_3x_3+W_2x_4 &( 0, -1, 1) \\
x_{18}&W_3x_3+W_2x_5 &( 0,-1 ,-1 ) \\
x_{19}&Z_2x_3+Z_3x_5 &( 0, 0,0 ) \\
x_{20}& Z_3 x_2+Z_2 x_4 &( 0, 1, 1) \\
x_{21}&W_3x_2+Z_2x_5 &( 0,1 ,-1 ) \\
x_{22}& W_2x_4+Z_3x_5 &( 0, 0,0 ) \\
x_{23}&Z_2x_6+Z_1x_7+W_1x_{12} &( 0, 1,0 ) \\
x_{24}&W_2x_6+Z_1x_8 +W_1x_{13} &( 0, -1, 0) \\
x_{25}& Z_3 x_6+Z_1x_9+W_1 x_{14} &( 0, 0, 1) \\
x_{26}& W_3 x_6+Z_1x_{10}+W_1x_{15} &( 0, 0, -1) \\
x_{27}&Z_3 x_8+W_2x_9+W_1x_{17} &( -1,-1 ,1 ) \\
x_{28}& W_3x_8+W_2x_{10}+W_1x_{18} &( -1,-1 ,-1 ) \\
x_{29}& Z_3x_{13}+W_2 x_{14}+Z_1x_{17} &( 1,-1 ,1 ) \\
x_{30}& W_3 x_{13}+W_2x_{15}+Z_1x_{18} &( 1, -1, -1) \\
x_{31}& W_2 x_7+Z_2 x_8+W_1x_{16} &( -1,0 ,0 ) \\
\hline
\end{array}
\quad
\begin{array}{|c|c|c|}
\hline
x&\d(x)&A(x)\\
\hline
x_{32}&  Z_3 x_7+Z_2x_9+W_1x_{20}&( -1,1 ,1 ) \\
x_{33}& W_3 x_7+Z_2x_{10}+W_1x_{21}&( -1, 1, -1) \\
x_{34}& W_2 x_{12}+Z_2 x_{13}+Z_1 x_{16} &( 1,0 ,0 ) \\
x_{35}&Z_3 x_{12}+Z_2x_{14}+Z_1x_{20} &( 1, 1,1 ) \\
x_{36}&W_2 x_{12}+Z_2 x_{15}+Z_1x_{21} &(1 ,1 ,-1 ) \\
x_{37}&Z_3 x_{16}+Z_2 x_{17}+W_2x_{20} &( 0, 0, 1) \\
x_{38}&W_3 x_{16}+Z_2x_{18}+W_2 x_{21} &( 0, 0, -1) \\
x_{39}& W_3 x_9+Z_3 x_{10}+W_1 x_{22} &( -1,0 ,0 ) \\
x_{40}&  W_3 x_{14}+Z_3 x_{15}+Z_1 x_{22}&( 1, 0, 0) \\
x_{41}& W_3 x_{17}+Z_3 x_{18}+W_2 x_{22}&( 0, -1, 0) \\
x_{42}& W_3 x_{20}+Z_3 x_{21}+Z_2 x_{22} &( 0,1 ,0 ) \\
x_{43}&Z_3 x_{24}+W_2 x_{25}+Z_1 x_{27}+W_1 x_{29} &( 0, -1,1) \\
x_{44}& W_3 x_{24}+W_2 x_{26}+Z_1 x_{28}+W_1 x_{30}+W_3 x_3 &(0 , -1,-1 ) \\
x_{45}&W_2x_{23}+Z_2 x_{24}+Z_1 x_{31}+W_1 x_{34} &( 0,0 ,0 ) \\
x_{46}& Z_3 x_{23}+Z_2 x_{25}+Z_1 x_{32}+W_1 x_{35}&( 0, 1, 1) \\
x_{47}& W_3 x_{23}+Z_2 x_{26}+Z_1 x_{33}+W_1 x_{36}&( 0,1 ,-1 ) \\
x_{48}& Z_2 x_{27}+Z_3 x_{31}+W_2 x_{32}+W_1 x_{37}&( -1, 0, 1) \\
x_{49}& Z_2 x_{28}+W_3 x_{31}+W_2 x_{33}+W_1 x_{38} +W_1 x_5&( -1,0 ,-1 ) \\
x_{50}&Z_2 x_{29}+Z_3 x_{34}+W_2 x_{35}+Z_1 x_{37} &( 1,0 ,1 ) \\
x_{51}& Z_2 x_{30}+W_3 x_{34}+W_2 x_{36}+Z_1 x_{38} &( 1,0 ,-1 ) \\
x_{52}& W_3 x_{25}+Z_3 x_{26}+Z_1 x_{39}+W_1 x_{40} &( 0,0 ,0 ) \\
x_{53}& W_3 x_{27}+Z_3 x_{28}+W_2 x_{39}+W_1 x_{41}+W_2 x_1&( -1,-1 ,0 ) \\
x_{54}& W_3 x_{29}+Z_3 x_{30}+W_2 x_{40}+Z_1 x_{41} &( 1, -1, 0) \\
x_{55}&W_3 x_{32}+Z_3 x_{33}+Z_2 x_{39}+W_1 x_{42} &( -1, 1,0 ) \\
x_{56}& W_3 x_{35}+Z_3 x_{36}+Z_2 x_{40}+Z_1 x_{42} &( 1,1 ,0 ) \\
x_{57}& W_3 x_{37}+Z_3 x_{38}+Z_2 x_{41}+W_2 x_{42} &( 0, 0,0 ) \\
x_{58}& Z_2 x_{43}+Z_3 x_{45}+W_2 x_{46}+Z_1 x_{48}+W_1 x_{50}&( 0, 0, 1) \\
x_{59}& Z_2 x_{44}+W_3 x_{45}+W_2 x_{47}+Z_1 x_{49}+W_1 x_{51}+Z_1x_{10}+W_3(x_{11}+x_{19})&( 0,0 ,-1 ) \\
x_{60}& W_3 x_{43}+Z_3 x_{44}+W_2 x_{52}+Z_1 x_{53}+W_1 x_{54}+Z_3 x_{18}+W_2x_{11}&( 0, -1,0 ) \\
x_{61}&W_3 x_{46}+Z_3 x_{47}+Z_2 x_{52}+Z_1 x_{55}+W_1 x_{56} &( 0,1 ,0 ) \\
x_{62}& W_3 x_{48}+Z_3 x_{49}+Z_2 x_{53}+W_2 x_{55}+W_1 x_{57} +Z_2 x_8+W_1x_{19}&( -1,0 ,0 ) \\
x_{63}&W_3 x_{50}+Z_3 x_{51}+Z_2 x_{54}+W_2 x_{56}+ Z_1 x_{57} &( 1, 0, 0) \\
\hline
\end{array}
$}
\]
\caption{The complex $\cC(\cB)$ obtained from $(F_*,\d_1+\d_3)$ by canceling the acyclic quotient complex generated by $X_0$ and $Y_0$. }
\label{fig:free-res-gens-canceled}
\end{figure}

\subsection{Equivalence with $\cCFL(\cB)$}

In this section, we finish our proof of Theorem~\ref{thm:main-computation}, restated below:

\begin{thm} The link Floer complex of the Borromean rings $\cCFL(\cB)$ is homotopy equivalent to $\cC(\cB)$.
\end{thm}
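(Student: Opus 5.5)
By Corollary~\ref{cor:deformation-link-Floer}, $\cCFL(\cB)$ is homotopy equivalent to a deformation $(F_*,\d_1+\a^1)$ of the free resolution $F_*$ of Section~\ref{sec:free-resolution}. Here $\a^1$ lowers the resolution grading by at least $2$ and has the bigrading $(-1,-1)$ and Alexander grading $(0,0,0)$. Lemma~\ref{lem:ext-group-computation} places us in the setting of part (2) of Proposition~\ref{prop:unique-deformation}, with $m=3$. So, up to homotopy equivalence, there are exactly two $\cC_\infty$-deformations of $F_*$: the trivial one $(F_*,\d_1)$ and a unique non-trivial one. The plan is therefore to show two things. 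First, $(F_*,\d_1+\d_3)$, with $\d_3=\a_3+\b_3+\g_3$, is a non-trivial deformation. Second, $\cCFL(\cB)$ is not the trivial deformation. Then both $\cCFL(\cB)$ and $(F_*,\d_1+\d_3)$ are equivalent to the unique non-trivial deformation. Finally we cancel the acyclic pair $X_0,Y_0$ to obtain $\cC(\cB)$, and identify $\cC(\cB)$ with the complex in the introduction by an explicit change of basis matching $x_0,\dots,x_5$ and the remaining generators with the subquotient of $B_1\otimes B_2\otimes B_3$ together with $\ve{X},\ve{Y}$.

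\emph{Step 1: Maurer--Cartan.} We verify $\d_1\d_3+\d_3\d_1+\d_3\d_3=0$ directly from Figure~\ref{fig:free-res-gens}. The term $\d_3\circ\d_3$ vanishes for degree reasons: $\d_3$ lowers resolution grading by $3$ and $F_*$ lives in gradings $0$ through $5$, and $\a_3,\b_3,\g_3$ have targets in $F_0,F_1,F_2$, where $\d_3$ vanishes. So only the two linear equations stated above need checking, and this is a finite calculation. This makes $(F_*,\d_1,0,\d_3)$ a $\cC_\infty$-complex, with the appropriate gradings checked from the table.

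\emph{Step 2: non-triviality.} By the final argument in the proof of Proposition~\ref{prop:unique-deformation}, it suffices to show that $[\d_3]\neq 0$ in $H_*\Hom^{-3}(F_*,F_*)=\Ext^3_{(-1,-1)}$. I would argue this invariantly. If $\d_3$ were null-homotopic, then Lemma~\ref{lem:basic_deformation_lemma}\eqref{itm:equivalent-complexes} would make $(F_*,\d_1+\d_3)$ equivalent to $F_*$. The minimal free resolution has $66$ generators, while after cancelling $X_0,Y_0$ the deformed complex has $64$. Reducing modulo $(W_i,Z_i)$, the complexes $F_*\otimes_{\cR_3}\bF$ and $(F_*,\d_1+\d_3)\otimes_{\cR_3}\bF$ have homologies of ranks $66$ and $64$ respectively. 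The first is $66$ because $F_*$ is minimal. The second drops to $64$ because $\g_3(Y_0)=X_0$ has a unit coefficient. Homotopy-equivalent complexes have the same reduced homology, so the deformation is non-trivial.

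\emph{Step 3: $\cCFL(\cB)$ is non-trivial.} This is the formality obstruction from the introduction. The Borromean rings are alternating, so by \cite{OSAlternating} the rank of $\widehat{\HFL}(\cB)=H_*(\cCFL(\cB)\otimes_{\cR_3}\bF)$ is $64$. This differs from $66$, so $\cCFL(\cB)\not\simeq F_*$. Combining Steps 1--3 with Proposition~\ref{prop:unique-deformation}(2), we conclude $\cCFL(\cB)\simeq(F_*,\d_1+\d_3)\simeq\cC(\cB)$.

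The main obstacle is Step 1 and the final identification of bases, since both are large explicit computations; the conceptual input is entirely in the earlier results. One point deserves care: Proposition~\ref{prop:unique-deformation} classifies deformations up to $\cC_\infty$-equivalence, whereas Lemma~\ref{lem:deformation} produces an $\a^1$ that need not be homogeneous in the resolution grading. We should note that such an $\a^1$ decomposes into components $\d_i$ lowering the resolution grading by $i$. Viewing it as a $\cC_\infty$-structure with $\d_2$ possibly nonzero is then exactly the setting the proposition handles.
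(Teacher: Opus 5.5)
Your proposal is correct and follows the paper's proof. The paper uses the same ingredients: the deformation from Corollary~\ref{cor:deformation-link-Floer}, the two-class dichotomy from Proposition~\ref{prop:unique-deformation}(2) with $m=3$, and the comparison of reduced ranks ($66$ versus $64$, via \cite{OSAlternating}) to rule out the trivial deformation. Your Steps 1 and 2 spell out what the paper leaves implicit, namely the Maurer--Cartan check and the non-triviality of $(F_*,\d_1+\d_3)$. For Step 2, the reduced-rank argument alone already shows that $(F_*,\d_1+\d_3)$ is not equivalent to $F_*$, so the detour through $[\d_3]\neq 0$ is unnecessary.
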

\begin{proof} By Lemma~\ref{lem:deformation}, the link Floer complex $\cCFL(\cB)$ can be described by a deformation $(F_*,\d_1+\a)$ of the free-resolution complex where $\a=\a_2+\dots+\a_5$ where $\a_i$ drops the resolution grading by at least 2. The Ext groups $\Ext^i(\cHFL(\cB),\cHFL(\cB))$ satisfy the assumptions of Proposition~\ref{prop:unique-deformation} by Lemma~\ref{lem:ext-group-computation}, and therefore $F_*$ admits exactly one non-trivial deformation as a $\cC_\infty$-complex. Since $F_*/(W_1,Z_1,W_2,Z_2,W_3,Z_3)$ has homology of rank 66, whereas $\widehat{\CFL}(\cB)=\cCFL(\cB)/(W_1,Z_1,W_2,Z_2,W_3,Z_3)$ has homology of rank 64 by \cite{OSAlternating} because the graded Euler characteristic of  $\widehat{\CFL}(\cB)$ is (up to multiplication by a monomial) $(1-t_1)(1-t_2)(t-t_3)(1-t_1-t_2-t_3+t_1t_2+t_1t_3+t_2t_3-t_1t_2t_3)$ (when expanded, the sum of the absolute values of the coefficients of this polynomial is 64). In particular, $\cCFL(\cB)$ must be homotopy equivalent to the unique non-trivial deformation of $(\cF_*,\d_1)$, which is $(\cF_*,\d_1+\d_3)\simeq \cC(\cB)$. 
\end{proof}

\subsection{A compact description} \label{sec:compact}

Instead of the ``bag of generators'' description given in Figure~\ref{fig:free-res-gens-canceled}, we now offer a more compact description of the link Floer complex $\cCFL(\cB)$.

Write $B$ for the chain complex over $\bF[W,Z]$, written as
\[
\begin{tikzcd}[labels=description]& \theta \ar[dl, "Z"] \ar[dr, "W"]\\
w \ar[dr, "W"] &&  z \ar[dl, "Z"]\\
&1
\end{tikzcd}
\]

Then the Borromean link complex can be described by taking the the subquotient
\[
\cC_0\subset B_1\otimes_{\bF} B_2\otimes B_3
\] 
consisting of all generators except $111$ and $\theta\theta\theta$. We view the above as a type-$D$ structure over $\bF[W_1,Z_1,W_2,Z_2,W_3,Z_3]$ and will omit the tensor product symbol $\otimes$ when writing generators. We then adjoin two new generators $\ve{X}$ and $\ve{Y}$ to form a new vector space 
Then $\cC(\cB)=\cC_0\oplus \Span(\ve{X}, \ve{Y})$, with differential of the form $\d_0+ \a$. Here $\d_0$ is the differential of $\cC_0$, while $\a$ is the following map
\begin{enumerate}
\item $\a(\ve{X})= Z_1 w11+Z_3 11w$.
\item $\a(\ve{Y})=Z_2 1w1+ Z_3 11w$.
\item $\a(\theta w w)=W_3 1w1.$
\item $\a(w\theta w)=W_1 11w$.
\item $\a (ww\theta)=W_2 w11$.
\item $\a (\theta \theta w)=Z_1 w1w+W_3(\ve{X}+\ve{Y}).$
\item $\a(\theta w \theta)=Z_3 1ww+W_2 \ve{X}$.
\item $\a(w\theta\theta)=Z_2 ww1+W_1 \ve{Y}.$
\end{enumerate}

The translation between this version and the version in Figure~\ref{fig:free-res-gens-canceled} is given in Figure~\ref{fig:basis-correspondence}. 

\begin{figure}[h]
\[
\scalebox{.7}{$
\begin{array}{|c|c|}
\hline
\text{Basis 1}& \text{Basis 2}\\
\hline
x_{0}&z11  \\
x_{1}& w11 \\
x_{2}&1z1  \\
x_{3}& 1w1 \\
x_{4}&11z  \\
x_{5}& 11w \\
x_{6}& \theta11 \\
x_{7}&wz1 \\
x_{8}& ww1  \\
x_{9}&w1z  \\
x_{10}&w1w\\
x_{	11}&\ve{X} \\
x_{12}&  zz1 \\
x_{13}& zw1 \\
x_{14}& z1z  \\
x_{15}&z1w  \\
x_{16}&1\theta 1  \\
x_{17}&1wz  \\
x_{18}&1ww \\
x_{19}&\ve{Y} \\
x_{20}& 1zz  \\
x_{21}&1zw  \\
x_{22}& 1\theta\theta  \\
x_{23}&\theta z1 \\
x_{24}&\theta w 1  \\
x_{25}& \theta 1 z  \\
x_{26}& \theta 1 w  \\
x_{27}&wwz  \\
x_{28}& www  \\
x_{29}& zwz  \\
x_{30}& zww  \\
x_{31}& w\theta 1 \\
\hline
\end{array}
\quad
\begin{array}{|c|c|}
\hline
\text{Basis 1}&\text{Basis 2}\\
\hline
x_{32}&  wwz\\
x_{33}& wzw\\
x_{34}& z\theta 1 \\
x_{35}&zzz \\
x_{36}&zzw  \\
x_{37}&1\theta z  \\
x_{38}&1\theta w \\
x_{39}& w1\theta \\
x_{40}&  z1\theta \\
x_{41}& 1w\theta \\
x_{42}& 1z\theta \\
x_{43}&\theta w z  \\
x_{44}& \theta w w \\
x_{45}&\theta \theta 1 \\
x_{46}& \theta z z \\
x_{47}& \theta z w \\
x_{48}& w\theta z \\
x_{49}& w\theta w \\
x_{50}&z\theta z \\
x_{51}& z\theta w  \\
x_{52}& \theta 1 \theta  \\
x_{53}& ww\theta \\
x_{54}& zw\theta \\
x_{55}&wz\theta  \\
x_{56}& zz\theta \\
x_{57}& 1\theta\theta \\
x_{58}& \theta\theta z \\
x_{59}& \theta\theta w \\
x_{60}& \theta w \theta \\
x_{61}&\theta z \theta  \\
x_{62}&w\theta\theta \\
x_{63}&z\theta\theta \\
\hline
\end{array}
$}
\]
\caption{The correspondence between the two bases of $\cCFL(\cB)$. }
\label{fig:basis-correspondence}
\end{figure}

\pagebreak

\section{The link involution} \label{sec:iota}

In this section, we prove that up to equivalence, there are 8 different possible models of the link involution, each of which corresponds to a different choice of twisting direction in the construction of the link involution.

\subsection{The definition of the link involution}

We now briefly review the construction of the link involution, following the exposition in \cite{HHSZNaturality}*{Section~15}. Let $L\subset S^3$ be a link, and pick a choice of twisting directions $\frd$ for the components of $L$. Let $(\Sigma,\as,\bs,\ws,\zs)$ be a Heegaard diagram for $(S^3,L,\ws,\zs)$. One observes that $(-\Sigma,\bs,\as,\zs,\ws)$ is also a Heegaard diagram for $(S^3,L,\zs,\ws)$. One performs a half twist along $L$ in the twisting directions, which switches $\ws$ and $\zs$. Call the image of $(-\Sigma,\bs,\as,\zs,\ws)$ under this diffeomorphism map  $(\Sigma',\as',\bs',\ws,\zs)$. The link involution $\iota_{L,\frd}$ is the composition of the following maps
\[
\begin{tikzcd}[row sep=.7cm]\cCFL(\Sigma, \as,\bs,\ws,\zs)\ar[d, "\eta"] \\ \cCFL(-\Sigma,\bs, \as, \zs, \ws) \ar[d, "\phi_{\frd}"] \\ \cCFL(\Sigma',\bs',\as',\ws,\zs)\ar[d, "\Psi"] \\ \cCFL(\Sigma,\as,\bs,\ws,\zs). 
\end{tikzcd}
\]

In the above, $\eta$ is the canonical isomorphism which sends an intersection point $\xs\in \bT_{\a}\cap \bT_{\b}$ on $\Sigma$ to its image $\bar\xs \in \bT_{\b}\cap \bT_{\a}$ on $-\Sigma$. This map switches $W_i$ and $Z_i$. The map $\phi_{\frd}$ is the tautological map on intersection points when we apply the twisting diffeomorphism to the Heegaard surface. Finally the map $\Psi$ is the naturality map from \cite{JTNaturality}. 

If $\frd$ and $\frd'$ are choices of directions which are obtained by reversing the twisting direction of one component $K_i$, then the two involutions are related by the basepoint moving map along $K_i$, often called the \emph{Sarkar map}:
\[
\iota_{L,\frd'}\simeq (\id+\Phi_i\circ \Psi_i)\circ \iota_{L,\frd}.
\]
See \cite{SarkarMovingBasepoints} and \cite{ZemQuasi} for a computation and full discussion.
In the above, $\Phi_i$ is the formal derivative of the differential of $\cCFL(L)$ by the variable $W_i$; that is, it is obtained by writing the differential as an $n\times n$-matrix and taking the derivative with respect to $W_i$. The map $\Psi_i$ is the formal derivative with respect to $Z_i$. 

We will write $\xi_i$ for the Sarkar map on the $i$-th component:
\[
\xi_i=\id+\Phi_i \circ \Psi_i.
\]

The link involution satisfies
\[
\iota_{L,\frd}^2\simeq \xi_1\circ \xi_2\circ \xi_3=(\id+\Phi_1\circ \Psi_1)\circ (\id+\Phi_2\circ \Psi_2)\circ (\id+\Phi_3\circ \Psi_3).
\]
The above equation is proven by straightforward adaptation of the case of knots \cite{HMInvolutive}*{Section~6.2}. 

\subsection{The Sarkar maps}

We now compute the Sarkar map on the Borromean links complex, written in the notation of Section~\ref{sec:compact}. Recall that the Sarkar map $\xi$ corresponding to the full link is in fact a composition $\xi_1 \xi_2 \xi_3$ of the three Sarkar maps associated to each component. A straightforward computation shows that each of the maps $\xi_i$ for $i=1,2,3$ is derived from the $i$th Sarkar map on the complex $B_1 \otimes B_2 \otimes B_3$ as follows. Given a triple $abc \in \{1, z,w,\theta\}$, the $i$th Sarkar map on $abc \in B_1 \otimes B_2 \otimes B_3$ is obtained by adding the identity map to the map which returns the sum of all generators obtained by replacing an instance of $\theta$ in the $i$th position in $abc$ with $1$ in the $i$th position. This induces a map on the subquotient complex $\cC_0$, which concretely alters the map on the generators of $\cC_0$ by setting all instances of $111$ to zero. We then extend to $\cC_0 +\Span \{\ve{X}, \ve{Y}\}$ by letting $\xi_i$ be the identity on $\ve{X}$ and $\ve{Y}$. With this in mind, we present the full composition $\xi$ in Figure~\ref{fig:sarkar}.

\begin{figure}[h]
\[
\scalebox{.7}{$
\begin{array}{|c|c|}
\hline
\text{Basis 2}& \mathrm{Id} + \xi\\
\hline
\theta z 1 & 1z1\\
\theta w 1 & 1w1 \\
\theta 1 z & 11z \\
\theta 1 w & 11w \\
1 \theta z & 11z \\
1 \theta w & 11w \\
z \theta 1 & z11 \\
w \theta 1 & w11 \\
z 1 \theta & z11 \\
w 1 \theta & w11 \\
1 z \theta & 1z1 \\
1 w \theta & 1w1 \\
\theta zz & 1zz \\
\theta ww & 1ww \\
\theta wz & 1wz \\
\theta zw & 1zw \\
z \theta z & z1z \\
\hline
\end{array}
\quad
\begin{array}{|c|c|}
\hline
\text{Basis 2}& \mathrm{Id} + \xi\\
\hline
w \theta w & w1w \\
z \theta w & z1w \\
w \theta z & w1z\\
zz \theta & zz1 \\
ww \theta & ww1\\ 
wz \theta & wz1 \\
zw \theta & zw1 \\
\theta \theta 1 & 1\theta 1 + \theta 11 \\
\theta 1 \theta  & 11 \theta + \theta 11 \\
1 \theta \theta &  1 \theta 1 + \theta 11 \\
\theta \theta z & 1 \theta z + \theta 1 z + 11z \\
\theta \theta w &  1 \theta w + \theta 1 w + 11w\\
\theta z \theta & 1 z \theta + \theta z 1 + 1z1 \\
\theta w \theta  & 1 w \theta + \theta w 1 + 1w1 \\
z\theta \theta & z 1 \theta + z \theta 1 + z 11 \\
w\theta \theta & w 1 \theta + w \theta 1 + w 11 \\ 
\hline
\end{array}
$}
\]
\caption{The Sarkar map $\xi = \xi_1 \xi_2\xi_3$ is the sum of the identity map with the map shown in the table.}
\label{fig:sarkar}
\end{figure}

\subsection{The link involution}

We now compute the options for $\iota_L$. We first present our candidate involution. Choose $\alpha_1, \alpha_2 \in \{0,1\}$ such that $\alpha_1+\alpha_2=1$ modulo two, and similarly $\beta_1$ and $\beta_2$ and $\gamma_1$ and $\gamma_2$. We introduce some notation: given a generator $abc$ with $a,b,c \in \{1, w, z, \theta\}$, we let $\bar{a} \bar{b} \bar{c}$ be the triple given by applying the rule that $\bar{1}=1$, $\bar{z}=w$, $\bar{w}=z$, and $\bar{\theta}=\theta$. We call this map the \emph{bar map}.

Consider the map shown in Figure~\ref{fig:iotaL}.

\begin{figure}[h]
\[
\scalebox{.7}{$
\begin{array}{|c|c|}
\hline
\text{Basis 2}& \iota_L\\
\hline
abc \text{ if } a,b,c \in \{1,z,w \} & \bar{a}\bar{b}\bar{c}  \\
\theta 1 1 & \theta 1 1 \\
1 \theta 1 & 1 \theta 1 \\
1 1 \theta & 1 1 \theta \\
\ve{X} & \ve{X} + \theta 11 + 11\theta  \\
\ve{Y} & \ve{Y} + 1 \theta 1 + 11 \theta \\
\theta z 1 & \theta w 1 + \alpha_1 1w1 \\
\theta w 1 & \theta z 1 + \alpha_2 1z1 \\
\theta 1 z & \theta 1 w + \alpha_2 11w \\
\theta 1 w & \theta 1 z + \alpha_1 11z \\
1 \theta z & 1 \theta w + \beta_1 11w \\
1 \theta w & 1 \theta z + \beta_2 11z \\
z \theta 1 & w\theta 1 + \beta_2 w11 \\
w \theta 1 & z \theta 1 + \beta_1 z11 \\
z 1 \theta & w 1 \theta + \gamma_1 w11 \\
w 1 \theta & z 1 \theta + \gamma_2 z11 \\
1 z \theta & 1 w \theta + \gamma_2 1w1 \\
1 w \theta & 1 z \theta + \gamma_1 1z1 \\
\theta zz & \theta ww + \alpha_2 1 ww \\
\theta ww & \theta zz + \alpha_1 1 zz \\
\hline
\end{array}
\quad
\begin{array}{|c|c|}
\hline
\text{Basis 2}& \iota_L\\
\hline
\theta wz & \theta zw + \alpha_2 1zw \\
\theta zw & \theta wz + \alpha_1 wz \\
z \theta z & w \theta w + \beta_2 w1w \\
w \theta w & z \theta z + \beta_1 z1z \\
z \theta w & w \theta z + \beta_2 w1z \\
w \theta z & z \theta w + \beta_1 z1w\\
zz \theta & ww\theta + \gamma_2 ww1 \\
ww \theta & zz \theta + \gamma_1 zz1\\ 
wz \theta & zw \theta + \gamma_2 zw1 \\
zw \theta & wz \theta + \gamma_1 wz1 \\
\theta \theta 1 & \theta \theta 1 + \alpha_2 1 \theta 1 + \beta_1 \theta 1 1 + \ve{X} + \ve{Y} \\
\theta 1 \theta  & \theta 1 \theta + \alpha_1 11 \theta + \gamma_2 \theta 11 + \ve{X} \\
1 \theta \theta &  1 \theta \theta + \beta_2 11 \theta + \gamma_1 1\theta 1 + \ve{Y} \\
\theta \theta z & \theta \theta w + \alpha_2 1 \theta w + \beta_1 \theta 1 w + 11 w \\
\theta \theta w & \theta \theta z + \alpha_1 1 \theta z + \beta_2 \theta 1 z + (\alpha_1 \beta_1 + \alpha_2 \beta_2) 11z \\
\theta z \theta & \theta w \theta + \gamma_2 \theta w 1 + \alpha_1 1w\theta + 1w1 \\
\theta w \theta  & \theta z \theta + \gamma_1 \theta z 1 + \alpha_2 1z\theta + (\alpha_1 \gamma_1 + \alpha_2\gamma_2) 1z1 \\
z\theta \theta & w\theta \theta + \beta_2 w1\theta + \gamma_1 w \theta 1 + w11 \\
w\theta \theta & z \theta \theta + \beta_1 z1\theta + \gamma_2 z \theta 1 + (\beta_1\gamma_1 + \beta_2\gamma_2)z11 \\ 
\hline
\end{array}
$}
\]
\caption{The eight possibilities for $\iota_L$. }
\label{fig:iotaL}
\end{figure}

\begin{thm} \label{thm:iota} Up to change of basis, the link involution of the Borromean rings is of the form shown in Figure~\ref{fig:iotaL}. Similarly up to change of basis, exchanging the values of $\alpha_1$ and $\alpha_2$ corresponds to reversing the orientation of the first component; exchanging the values of $\beta_1$ and $\beta_2$ corresponds to reversing the orientation of the second component; and exchanging the values of $\gamma_1$ and $\gamma_2$ corresponds to reversing the orientation of the third component.
\end{thm}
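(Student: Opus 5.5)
We will determine $\iota_L$ by its formal properties and then classify the possibilities algebraically. We use three facts about any link involution $\iota_{L,\frd}$ on $\cC(\cB)$:
\begin{enumerate}
\item it is a skew-equivariant chain map, meaning $\iota W_i=Z_i\iota$ and $\iota Z_i=W_i\iota$;
\item it preserves $\gr_{\ws}$ and $\gr_{\zs}$ after swapping them, and sends Alexander grading $A$ to $-A$;
\item it satisfies $\iota^2\simeq \xi_1\xi_2\xi_3$, with $\xi$ as computed in Figure~\ref{fig:sarkar}.
\end{enumerate}
By the grading constraint and skew-equivariance, $\iota$ sends each generator $abc$ to a combination of generators in the conjugate bigrading. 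We write $\iota=\iota_0+\iota'$, where $\iota_0$ is the bar map on the $\{1,z,w,\theta\}$ generators. Because $\cC(\cB)/(W_i,Z_i)$ is reduced and $\iota$ induces an isomorphism on $\widehat{\HFL}$, the leading term must be the bar map up to a grading-preserving automorphism. We absorb that automorphism by a change of basis. In the Borromean case the only nontrivial basis choices in each bigrading are the ones visible in Figure~\ref{fig:iotaL}.

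The next step is to solve the chain map equation $\d\iota+\iota\d=0$ as a linear system, proceeding generator by generator in increasing $\theta$-count. The $\theta$-free generators $abc$ with $a,b,c\in\{1,z,w\}$ are cycles modulo the subquotient, so $\iota$ can be taken to be the bar map on them. For one-$\theta$ generators such as $\theta z1$, the equation allows correction terms like $\alpha 1w1$. Skew-equivariance applied to $\d(\theta z1)$ and $\d(\theta w1)$ forces the two coefficients on the pair to be related. Homotopy invariance then lets us modify $\iota$ by $\d H+H\d$ and normalize. We expect the remaining freedom to be exactly one bit per component: this is the statement that the correction $\iota^2\simeq\xi$ forces $\alpha_1+\alpha_2=1$, because $\iota^2(\theta z1)$ must contain $1z1$ with coefficient $1$. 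The terms involving $\ve{X}$ and $\ve{Y}$, and the two- and three-$\theta$ generators, are then forced by the chain map equation with the deformation $\a$ of Section~\ref{sec:compact}. This should produce the quadratic coefficients such as $\alpha_1\beta_1+\alpha_2\beta_2$.

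Having shown that every solution is homotopic, after change of basis, to one of the eight maps in Figure~\ref{fig:iotaL}, we relate them. Reversing the twist on component $i$ composes $\iota$ with $\xi_i$ by the Sarkar formula. We compute $\xi_1\circ\iota$ directly from Figure~\ref{fig:sarkar}, up to homotopy and change of basis. It should swap $\alpha_1$ and $\alpha_2$ and leave the $\beta$ and $\gamma$ values unchanged, and similarly for $\xi_2$ and $\xi_3$. This shows that all eight are realized. Distinctness of the eight $\iota_L$-complexes would follow by checking that no skew-graded automorphism intertwines them, for example by comparing the induced maps on $\widehat{\HFL}$ modulo images.

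The main obstacle is the completeness step: showing that the chain map, grading, and $\iota^2\simeq\xi$ constraints admit no further solutions up to homotopy and change of basis. This is a large but finite linear-algebra problem over 64 generators. We would handle it by computer, or by working in each Alexander grading separately and using the $\Ext$-type rigidity of Lemma~\ref{lem:ext-group-computation} to bound the homotopy classes of skew-equivariant chain maps with prescribed leading term.
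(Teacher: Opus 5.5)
Your outline matches the paper's: constrain $\iota_L$ by the chain-map condition, Alexander-grading reversal and $\iota_L^2\simeq\xi_1\xi_2\xi_3$, solve level by level, and realize the twist reversals by composing with $\xi_i$. The gap is that the actual determination, namely that these constraints leave exactly the binary unknowns of Figure~\ref{fig:iotaL}, is only asserted (``we expect'', ``should''), and the mechanisms you propose for it fail.

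\textbf{The normalization step is false.} You propose to ``absorb that automorphism by a change of basis'' so that the leading term is the bar map. A change of basis is a chain isomorphism $\phi$ of $\cC(\cB)$, and it acts on $\iota$ by conjugation. Because $\cC(\cB)$ is reduced, every homotopy term $\d H+H\d$ vanishes modulo $\mathfrak{m}=(W_1,Z_1,W_2,Z_2,W_3,Z_3)$. Hence $\widehat\iota^{\,2}=\widehat\xi$ holds exactly on $\widehat{\HFL}(\cB)$, and $\widehat\xi\neq\id$ (e.g.\ $\xi(\theta z1)=\theta z1+1z1$). So $\widehat\iota$ is never conjugate to the bar map, or to any map squaring to the identity. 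Indeed every correction term in Figure~\ref{fig:iotaL} has an $\bF$-coefficient, so these terms are precisely the part of $\widehat\iota$ you propose to absorb.

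The paper instead derives the bar map only where it is forced. The six cycles $z11,\dots,11w$ are unique in their gradings, and the chain relation then fixes the rest of $F_1$ and the generators $abc$ with $a,b,c\in\{z,w\}$. The only basis changes it uses are specific chain isomorphisms such as $\theta\theta w\mapsto\theta\theta w+11w$, which is valid because $11w$ is a cycle and nothing has $\theta\theta w$ in its differential. Your stated reason for the bar map on the $\theta$-free generators is also wrong: $\d(wz1)=W_1\,1z1+Z_2\,w11\neq 0$. There are also no three-$\theta$ generators.

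\textbf{Several coefficients are not forced by the chain-map equation.} The coefficient of $11w$ in $\iota(\theta\theta z)$ and of $11z$ in $\iota(\theta\theta w)$ are invisible to $\d\iota+\iota\d$, since $11w$ and $11z$ are cycles and no generator has $\theta\theta z$ or $\theta\theta w$ in its differential. The paper obtains $\delta_3+\epsilon_3=\alpha_1\beta_1+\alpha_2\beta_2+1$ from the square condition, read modulo $\mathfrak{m}$. It then sets $\delta_3=1$ by the basis change above, and this is where the quadratic coefficients come from. The relations $a_1+a_2=1$ (and their analogues) likewise come from the square condition. The collapse of the twelve pairs $a_i,\dots,\ell_i$ to $\alpha,\beta,\gamma$ comes from the chain relation on $\theta zz$, $\theta wz$, $z\theta z$, $z\theta w$, $zz\theta$, $wz\theta$.

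\textbf{The completeness tool does not apply.} Lemma~\ref{lem:ext-group-computation} concerns $\Ext^{i}_{(-1,-1)}$ for $i>1$, which governs deformations of the differential. It says nothing about homotopy classes of skew-equivariant degree-zero chain maps. Moreover, since $\cC(\cB)$ is not formal, $\Ext$ of $\cHFL(\cB)$ does not directly compute the relevant morphism complex. What is actually needed is the explicit finite case analysis the paper carries out, with the square condition used modulo $\mathfrak{m}$.
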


\begin{proof} We recall the following facts about the involution:

\begin{enumerate} [label=($A$-\arabic*)]
\item \label{iota:chainmap} The map $\iota_L$ is a chain map.
\item \label{iota:reversing} The map $\iota_L$ sends an element in Alexander grading $(A_1, A_2, A_3) = (i,j,k)$ to an element in Alexander grading $(A_1, A_2, A_3) = (-i,-j,-k)$.
\item \label{iota:square} The map $\iota_L$ has square homotopic to the Sarkar map $\xi = \xi_1 \circ \xi_2 \circ \xi_3$.
\end{enumerate}

We begin with $F_0$. We observe that the elements $11z, 11w, 1z1, 1w1, z11, w11$ are the only terms in the span of the generators in their respective Alexander triple gradings which have differential zero. By \ref{iota:chainmap} and \ref{iota:reversing}, we have that $\iota_L$ interchanges these generators in pairs, to wit $11z \leftrightarrow 11w$, $1z1 \leftrightarrow 1w1$, and $z11 \leftrightarrow w11$. This fixes the involution on filtration level $F_0$ to be given by the bar map.

We continue now to $F_1$. Applying the computation of the involution on $F_0$ and the facts \ref{iota:chainmap} and \ref{iota:reversing} additionally fixes the action on $F_1$. For the generators in $F_1$ of the form $abc$ such that exactly one of $a,b,c$ is $1$ and the other two are elements of $\{z,w\}$, we see that the involution $\iota_L$ is given by the bar map. Special attention is due to the remaining five generators of $F_1$, which lie in grading $(0,0,0)$. These are the generators $\{1 1 \theta, 1 \theta 1, \theta 1 1, \ve{X}, \ve{Y}\}$. Analysis of the relationship \ref{iota:chainmap} shows that the behavior of these elements must be as follows:
\begin{itemize}
\item The three elements $11 \theta, 1 \theta 1, \theta 11$ are fixed; that is, $\iota_L$ acts on these elements by the bar map.
\item $\ve{X} \mapsto \ve{X} + \theta 11 + 11\theta$
\item $\ve{Y} \mapsto \ve{Y} + 1 \theta 1 + 11 \theta$.
\end{itemize}

Armed with this knowledge we may proceed to $F_2$. The eight elements of the form $abc$ such that $a,b,c \in \{z,w\}$ are the only generators in their respective Alexander gradings and by \ref{iota:square} are exchanged in pairs by the bar map. Now we consider elements of the form $abc$ for which one of $a,b,c$ is $\theta$, one is an element of $\{z,w\}$, and one is $1$. Given an element $abc$ of this form, we observe that properties \ref{iota:reversing} and \ref{iota:chainmap} allow $abc$ to map either to $\bar{a}\bar{b}\bar{c}$ or to the sum of the element $\bar{a}\bar{b}\bar{c}$ with the unique generator of $F_0$ sharing its Alexander grading. Property \ref{iota:square} ensures that exactly one of $abc$ and $\bar{a}\bar{b}\bar{c}$ picks up this extra term. In particular, we have the following options, where below the elements $a_1$ and $a_2$ always sum to $1$ modulo two, and so on for other such pairs.

\begin{itemize}

\item $\theta z 1 \mapsto \theta w 1 + a_1 1w1$
\item $\theta w 1 \mapsto \theta z 1 + a_2 1z1$ \\

\item $\theta 1 z \mapsto \theta 1 w + b_1 11w$
\item $\theta 1 w \mapsto \theta 1 z + b_2 11z$ \\

\item $1 \theta z \mapsto 1 \theta w + c_1 11w$
\item $1 \theta w \mapsto 1 \theta z + c_2 11z$ \\

\item $z \theta 1 \mapsto w\theta 1 + d_1 w11$
\item $w \theta 1 \mapsto z \theta 1 + d_2 z11$ \\

\item $z 1 \theta \mapsto w 1 \theta + e_1 w11$
\item $w 1 \theta \mapsto z 1 \theta + e_2 z11$ \\

\item $1 z \theta \mapsto 1 w \theta + f_1 1w1$
\item $1 w \theta \mapsto 1 z \theta + f_2 1z1$
\end{itemize}

We temporarily leave the indeterminacy above and proceed to consider $F_3$. In the case of triples $abc$ for which one of $a,b,c$ is $\theta$ and the remaining two are elements of $\{z,w\}$, we have a similar situation to the previous set of generators: such a triple may map to $\bar{a}\bar{b}\bar{c}$ or to the sum of $\bar{a}\bar{b}\bar{c}$ and the unique generator in $F_1$ in the same Alexander grading. In particular we once again have pairs of maps as follows, where once again the sum $g_1+g_2$ is one modulo two, and similarly for the other such pairs.

\begin{itemize}
\item $\theta zz \mapsto \theta ww + g_1 1 ww$
\item $\theta ww \mapsto \theta zz + g_2 1 zz$ \\

\item $\theta wz \mapsto \theta zw + h_1 1zw $
\item $\theta zw \mapsto \theta wz + h_2 1wz$\\

\item $z \theta z \mapsto w \theta w + i_1 w1w $ 
\item $w \theta w \mapsto z \theta z + i_2 z1z$ \\

\item $z \theta w \mapsto w \theta z + j_1 w1z$
\item $w \theta z \mapsto z \theta w + j_2 z1w$\\

\item $zz \theta \mapsto ww\theta + k_1 ww1$
\item $ww \theta \mapsto zz \theta + k_2 zz1$\\ 

\item $wz \theta \mapsto zw \theta + \ell_1 zw1$
\item $zw \theta \mapsto wz \theta + \ell_2 wz1$
\end{itemize}

We pause to examine the implications of \ref{iota:chainmap} for the variables thus far introduced. From the chain map relation applied to $\theta zz$, we learn that $g_1=b_1=a_2$, and correspondingly $g_2=b_2=a_1$. From the chain map relation applied to $\theta wz$, we learn that $h_1=b_1=a_2$ and correspondingly $h_2=b_2=a_1$. So in total $h_1=g_1=b_1=a_2$.

From the chain map relation applied to $z\theta z$, we obtain $i_1=d_1=c_2$, and correspondingly $i_2=d_2=c_1$. From the chain map relation applied to $z \theta w$, we obtain $j_1=d_1=c_2$, and correspondingly $j_2=d_2=c_1$. So in total $i_1=j_1=d_1=c_2$.

Finally, from the chain map relation applied to $zz \theta$, we obtain $k_1=f_1=e_2$, and correspondingly $k_2=f_2=e_1$. From the chain map relation applied to $wz\theta$, we obtain $\ell_1=f_1=e_2$, and correspondingly $\ell_2=f_2=e_1$. So we have $\ell_1=k_1=f_1=e_2$.

In particular, choosing a value for $\alpha_1 = h_2=g_2=b_2=a_1$ and $\alpha_2=h_1=g_1=b_1=a_2$ so that $\alpha_1+\alpha_2=1$ modulo two,  for $\beta_1 = i_2=j_2=d_2=c_1$ and $\beta_2=i_1=j_1=d_1=c_2$ so that $\beta_1+\beta_2=1$ modulo two, and finally for $\gamma_1 = \ell_2=k_2=f_2=e_1$ and $\gamma_2=\ell_1=k_1=f_1=e_2$ so that $\gamma_1+\gamma_2$ modulo two is enough to determine all the maps above.

We now turn our attention to the remaining three elements in $F_3$, which are $\theta \theta 1, \theta 1 \theta$, and $1\theta \theta$. We first notice that the chain map relation \ref{iota:chainmap} applied to these three elements shows that each of these three elements is sent to a sum of itself with generators of $F_1$ in grading $(0,0,0)$. We analyze $\theta 1 \theta$ carefully; the other two generators will admit a similar argument. Suppose that $\iota_L$ takes the following form on this generator, where all of the variables $\eta_i$ are either zero or one.
\[ \theta 1 \theta  \mapsto \theta 1 \theta + \eta_1 11 \theta +\eta_2 1 \theta 1 + \eta_3 \theta 11 + \eta_4 \ve{X} +\eta_5 \ve{Y}\]
The condition \ref{iota:square} that $\iota_L$ squares to the Sarkar map implies that $\eta_4=1$ and $\eta_5=0$. Moreover, the condition \ref{iota:chainmap} that $\iota_L$ is a chain map implies that $\eta_2=0$, $\eta_1=b_2 = \alpha_1$, and $\eta_3=e_2=\gamma_2$. This results in
\[ \theta 1 \theta  \mapsto \theta 1 \theta + \alpha_1 11 \theta + \gamma_2 \theta 11 + \ve{X} .\]
Applying similar logic to the other two generators we conclude that
\begin{align*}
\theta \theta 1 &\mapsto \theta \theta 1 + \alpha_2 1 \theta 1 + \beta_1 \theta 1 1 + \ve{X} + \ve{Y}\\
\theta 1 \theta  &\mapsto \theta 1 \theta + \alpha_1 11 \theta + \gamma_2 \theta 11 + \ve{X} \\
1 \theta \theta &\mapsto  1 \theta \theta + \beta_2 11 \theta + \gamma_1 1\theta 1 + \ve{Y}.
\end{align*}

We are now ready to examine the six generators of $F_4$, which are triples $abc$ such that two of $a,b,c$ are equal to $\theta$ and the remaining term is an element of $\{z,w\}$. We notice that the relations \ref{iota:chainmap} and \ref{iota:reversing} imply that a triple $abc$ in $F_4$ is sent to a a sum of $\bar{a}\bar{b}\bar{c}$ with elements of lower filtration level in the same Alexander grading. We examine the pair $\theta \theta z$ and $\theta \theta w$; a similar analysis applies to the other two pairs. Suppose that we have
\begin{align*}
\theta \theta z &\mapsto \theta \theta w + \delta_1 1 \theta w + \delta_2 \theta 1 w + \delta_3 11 w \\
\theta \theta w &\mapsto \theta \theta z + \epsilon_1 1 \theta z + \epsilon_2 \theta 1 z + \epsilon_3 11z.
\end{align*}
From \ref{iota:square}, we obtain $\epsilon_1=\delta_1+1$ and $\epsilon_2=\delta_2+1$ modulo two, furthermore, we find that $\delta_3 + \epsilon_3 = \delta_1 c_2 + \delta_2b_2 +1 = \epsilon_1 c_1 + \epsilon_2 b_1 +1$ modulo two. From \ref{iota:chainmap}, we obtain $\delta_1=\alpha_2$ and $\delta_2 = \beta_1$. The pair now reduces to
\begin{align*}
\theta \theta z &\mapsto \theta \theta w + \alpha_2 1 \theta w + \beta_1 \theta 1 w + \delta_3 11 w \\
\theta \theta w &\mapsto \theta \theta z + \alpha_1 1 \theta z + \beta_2 \theta 1 z + \epsilon_3 11z
\end{align*}
\noindent where $\delta_3+\epsilon_3 = \alpha_1\beta_1 + \alpha_2\beta_2 +1$. Up to possibly changing basis to replace $\theta \theta w$ with $\theta \theta w + 11w$, we may assume $\delta_3 =1$, so that we have
\begin{align*}
\theta \theta z &\mapsto \theta \theta w + \alpha_2 1 \theta w + \beta_1 \theta 1 w + 11 w \\
\theta \theta w &\mapsto \theta \theta z + \alpha_1 1 \theta z + \beta_2 \theta 1 z + (\alpha_1 \beta_1 + \alpha_2 \beta_2) 11z.
\end{align*}

Applying the same analysis to the remaining four generators gives us
\begin{align*}
\theta \theta z &\mapsto \theta \theta w + \alpha_2 1 \theta w + \beta_1 \theta 1 w + 11 w \\
\theta \theta w &\mapsto \theta \theta z + \alpha_1 1 \theta z + \beta_2 \theta 1 z + (\alpha_1 \beta_1 + \alpha_2 \beta_2) 11z \\
\theta z \theta &\mapsto \theta w \theta + \gamma_1 \theta w 1 + \alpha_2 1w\theta + 1w1 \\
\theta w \theta & \mapsto \theta z \theta + \gamma_2 \theta z 1 + \alpha_1 1z\theta + (\alpha_1 \gamma_1 + \alpha_2\gamma_2) 1z1 \\
z \theta \theta & \mapsto w\theta \theta + \beta_1 w1\theta + \gamma_1 w \theta 1 + w11\\
w \theta \theta &\mapsto z \theta \theta + \beta_2 z1\theta + \gamma_2 z \theta 1 + (\beta_1\gamma_1 + \beta_2\gamma_2)z11 
\end{align*}
again up to a change of basis.

This gives the eight claimed possibilites for $\iota_L$, which it is straightforward to confirm are nonconjugate. The statement about orientation reversal follows by applying $\xi_i$ to $\iota_L$ for $i=1,2,3$.
\end{proof}

\begin{rem} As a final note, we speculate on the relationship between the eight possible maps $\iota_L$ described above. We label each possibile map by the set $\{\alpha_i, \beta_j, \gamma_k\}$ of three variables which must be set equal to one to achieve it. We observe that there is a chain map on the complex $\mathcal B$ which sends a triple $abc$ to $bca$, rotates the elements $\{\ve{X}, \ve{Y}, \ve{X}+\ve{Y}\}$ cyclically so that $\ve{X}$ is taken to $\ve{Y}$ and so on, and furthermore similarly rotates the variables $\{W_3, W_2, W_1\}$ and $\{Z_3, Z_2, Z_1\}$ cyclically. We observe that applying this chain map rotates through each of the triple of maps
\[ \{\{\alpha_1, \beta_1, \gamma_2\}, \{\alpha_1, \beta_2, \gamma_1\}, \{\alpha_2, \beta_1, \gamma_1\}\} \]
\[ \{\{\alpha_2, \beta_2, \gamma_1\}, \{\alpha_2, \beta_1, \gamma_2\}, \{\alpha_1, \beta_2, \gamma_2\}\}\]
and fixes each of the maps $\{\alpha_1, \beta_1, \gamma_1\}$ and $\{\alpha_2,\beta_2,\gamma_2\}$. We conjecture that these sets of maps correspond to the orbits of the set of eight orientations on the Borromean rings produced by rotation.
\end{rem}

\begin{rem} We speculate that working over $\Z$, as in \cite{AM:HFZ}, one might be able to disambiguate which of the above maps corresponds to which choice of twisting directions $\frd$. Over $\Z$, the Sarkar maps for different twisting directions are not generally chain homotopic, so each map $\iota_{L,\frd}$ would square to a distinct map over $\Z$, and therefore could potentially be disambiguated. 
\end{rem}

\bibliographystyle{custom}
\def\MR#1{}
\bibliography{biblio}

% \bib, bibdiv, biblist are defined by the amsrefs package.
\begin{bibdiv}
\begin{biblist}

\bib{AM:HFZ}{misc}{
      author={Abouzaid, Mohammed},
      author={Manolescu, Ciprian},
       title={Canonical orientation in {H}eegaard {F}loer homology},
        date={2025},
        note={arXiv:2510.20062},
}

\bib{BLZLattice}{article}{
      author={Borodzik, Maciej},
      author={Liu, Beibei},
      author={Zemke, Ian},
       title={Lattice homology, formality, and plumbed {L}-space links},
        date={2024},
     journal={J. Eur. Math. Soc.},
        note={Published Online first. arXiv:2210.15792.},
}

\bib{Crainic}{misc}{
      author={Crainic, Marius},
       title={On the perturbation lemma, and deformations},
        date={2004},
        note={arXiv:0403266},
}

\bib{CZZ}{unpublished}{
      author={Chen, Daren},
      author={Zemke, Ian},
      author={Zhou, Hugo},
       title={L-space satellite operators and knot {F}loer homology},
        date={2024},
        note={arXiv:2412.05755},
}

\bib{GLLM_Borromean}{article}{
      author={Gorsky, Eugene},
      author={Lidman, Tye},
      author={Liu, Beibei},
      author={Moore, Allison~H.},
       title={Triple linking numbers and {H}eegaard {F}loer homology},
        date={2023},
        ISSN={1073-7928,1687-0247},
     journal={Int. Math. Res. Not. IMRN},
      number={6},
       pages={4501\ndash 4554},
         url={https://doi-org.uoregon.idm.oclc.org/10.1093/imrn/rnab368},
      review={\MR{4565671}},
}

\bib{GorskyNemethiLattice}{article}{
      author={Gorsky, Eugene},
      author={N\'{e}methi, Andr\'{a}s},
       title={Lattice and {H}eegaard {F}loer homologies of algebraic links},
        date={2015},
        ISSN={1073-7928},
     journal={Int. Math. Res. Not. IMRN},
      number={23},
       pages={12737\ndash 12780},
         url={https://doi.org/10.1093/imrn/rnv075},
      review={\MR{3431635}},
}

\bib{GorskyNemethiAlgebraicLinks}{article}{
      author={Gorsky, Eugene},
      author={N\'{e}methi, Andr\'{a}s},
       title={Links of plane curve singularities are {$L$}-space links},
        date={2016},
        ISSN={1472-2747},
     journal={Algebr. Geom. Topol.},
      volume={16},
      number={4},
       pages={1905\ndash 1912},
         url={https://doi.org/10.2140/agt.2016.16.1905},
      review={\MR{3546454}},
}

\bib{Mac2}{misc}{
      author={Grayson, Daniel~R.},
      author={Stillman, Michael~E.},
       title={Macaulay2, a software system for research in algebraic geometry},
         how={Available at \texttt{http://www.math.uiuc.edu/Macaulay2/}},
}

\bib{HHSZNaturality}{article}{
      author={Hendricks, Kristen},
      author={Hom, Jennifer},
      author={Stoffregen, Matthew},
      author={Zemke, Ian},
       title={Naturality and functoriality in involutive {H}eegaard {F}loer
  homology},
        date={2026},
        ISSN={1663-487X,1664-073X},
     journal={Quantum Topol.},
      volume={17},
      number={1},
       pages={45\ndash 188},
         url={https://doi.org/10.4171/qt/210},
      review={\MR{5032324}},
}

\bib{HMInvolutive}{article}{
      author={Hendricks, Kristen},
      author={Manolescu, Ciprian},
       title={Involutive {H}eegaard {F}loer homology},
        date={2017},
     journal={Duke Math. J.},
      volume={166},
      number={7},
       pages={1211\ndash 1299},
}

\bib{JTNaturality}{article}{
      author={Juh\'asz, Andr\'as},
      author={Thurston, Dylan},
      author={Zemke, Ian},
       title={Naturality and mapping class groups in {H}eegard {F}loer
  homology},
        date={2021},
        ISSN={0065-9266,1947-6221},
     journal={Mem. Amer. Math. Soc.},
      volume={273},
      number={1338},
       pages={v+174},
         url={https://doi.org/10.1090/memo/1338},
      review={\MR{4337438}},
}

\bib{LiuLSpaceLinks}{article}{
      author={Liu, Yajing},
       title={{$L$}-space surgeries on links},
        date={2017},
        ISSN={1663-487X,1664-073X},
     journal={Quantum Topol.},
      volume={8},
      number={3},
       pages={505\ndash 570},
         url={https://doi.org/10.4171/QT/96},
      review={\MR{3692910}},
}

\bib{LOTBordered}{article}{
      author={Lipshitz, Robert},
      author={Ozsvath, Peter~S.},
      author={Thurston, Dylan~P.},
       title={Bordered {H}eegaard {F}loer homology},
        date={2018},
        ISSN={0065-9266},
     journal={Mem. Amer. Math. Soc.},
      volume={254},
      number={1216},
       pages={viii+279},
         url={https://doi.org/10.1090/memo/1216},
      review={\MR{3827056}},
}

\bib{OSAlternating}{article}{
      author={Ozsv\'ath, Peter},
      author={Szab\'o, Zolt\'an},
       title={Heegaard {F}loer homology and alternating knots},
        date={2003},
        ISSN={1465-3060,1364-0380},
     journal={Geom. Topol.},
      volume={7},
       pages={225\ndash 254},
         url={https://doi.org/10.2140/gt.2003.7.225},
      review={\MR{1988285}},
}

\bib{OSKnots}{article}{
      author={Ozsv\'ath, Peter},
      author={Szab\'o, Zolt\'an},
       title={Holomorphic disks and knot invariants},
        date={2004},
     journal={Adv. Math.},
      volume={186},
      number={1},
       pages={58\ndash 116},
}

\bib{OSLens}{article}{
      author={Ozsv{\'a}th, Peter},
      author={Szab{\'o}, Zolt{\'a}n},
       title={On knot {F}loer homology and lens space surgeries},
        date={2005},
        ISSN={0040-9383},
     journal={Topology},
      volume={44},
      number={6},
       pages={1281\ndash 1300},
         url={https://doi.org/10.1016/j.top.2005.05.001},
}

\bib{OSLinks}{article}{
      author={Ozsv\'ath, Peter},
      author={Szab\'o, Zolt\'an},
       title={Holomorphic disks, link invariants and the multi-variable
  {A}lexander polynomial},
        date={2008},
     journal={Algebr. Geom. Topol.},
      volume={8},
      number={2},
       pages={615\ndash 692},
}

\bib{RasmussenKnots}{thesis}{
      author={Rasmussen, Jacob},
       title={{F}loer homology and knot complements},
        type={Ph.D. Thesis},
        date={2003},
        note={arXiv:math/0306378},
}

\bib{SarkarMovingBasepoints}{article}{
      author={Sarkar, Sucharit},
       title={Moving basepoints and the induced automorphisms of link {F}loer
  homology},
        date={2015},
     journal={Algebr. Geom. Topol.},
      volume={15},
      number={5},
       pages={2479\ndash 2515},
}

\bib{Seidel_HMS_Quartic}{article}{
      author={Seidel, Paul},
       title={Homological mirror symmetry for the quartic surface},
        date={2015},
        ISSN={0065-9266,1947-6221},
     journal={Mem. Amer. Math. Soc.},
      volume={236},
      number={1116},
       pages={vi+129},
         url={https://doi-org.uoregon.idm.oclc.org/10.1090/memo/1116},
      review={\MR{3364859}},
}

\bib{Weibel}{book}{
      author={Weibel, Charles~A.},
       title={An introduction to homological algebra},
      series={Cambridge Studies in Advanced Mathematics},
   publisher={Cambridge University Press, Cambridge},
        date={1994},
      volume={38},
        ISBN={0-521-43500-5; 0-521-55987-1},
         url={https://doi.org/10.1017/CBO9781139644136},
      review={\MR{1269324}},
}

\bib{ZemQuasi}{article}{
      author={Zemke, Ian},
       title={Quasistabilization and basepoint moving maps in link {F}loer
  homology},
        date={2017},
        ISSN={1472-2747},
     journal={Algebr. Geom. Topol.},
      volume={17},
      number={6},
       pages={3461\ndash 3518},
         url={https://doi.org/10.2140/agt.2017.17.3461},
      review={\MR{3709653}},
}

\end{biblist}
\end{bibdiv}

\end{document}